\documentclass{gtpart}    
\usepackage{pinlabel}
\usepackage[all]{xy}
 \usepackage{enumitem}
\usepackage{amsfonts}
\usepackage{amsmath, amscd, amssymb, yhmath, color}
\usepackage{latexsym}
\usepackage{mathrsfs}
\usepackage{xy}
\input xy
\usepackage{url}
\xyoption{all}  

\def \Mor{\mathop{\mathrm{Mor}}}
\def \Ob{\mathop\mathrm{Ob}}
\def \R{\mathbb{R}}
\def \Z{\mathbb{Z}}

\def \Q{\mathbb{Q}}

\def \M{\mathcal{M}}

\def \MO{\mathbf{MO}}

\def \BO{\mathop{\mathrm{BO}}}

\def \id{\mathrm{id}}
\def \Diff{\mathop{\mathrm{Diff}}}
\def \BDiff{\mathop{\mathrm{BDiff}}}

\def \B{\mathbf{B}}

\def \r{\mathcal{R}}

\def \co{\colon\thinspace}
\def \ccc{}


%

%

\title{The bordism version of the h-principle}
\author{Rustam Sadykov}
\givenname{Rustam}
\surname{Sadykov}
\address{Department of Mathematics\\ Kansas State University}
\email{rstsdk@gmail.com}


%

\keyword{generalized (extraordinary) cohomology theory, topological monoids, topological group completion, h-principle, differential relations, categories, classifying spaces of categories, immersions, submersions, maps with prescribed singularities}
\subject{primary}{msc2000}{55N20}
\subject{secondary}{msc2000}{53C23}


%
 
 


\newtheorem{theorem}{Theorem}[section]
\newtheorem{lemma}[theorem]{Lemma}
\newtheorem{warn}[theorem]{Warning}

\newtheorem{corollary}[theorem]{Corollary}
\newtheorem{proposition}[theorem]{Proposition}
\newtheorem*{B-principle}{B-principle}
\newtheorem*{H-principle}{H-principle}

\theoremstyle{remark}
\newtheorem{remark}[theorem]{Remark}

\theoremstyle{definition}
\newtheorem{definition}[theorem]{Definition}

\newtheorem{example}[theorem]{Example}

\let\OLDthebibliography\thebibliography
\renewcommand\thebibliography[1]{
  \OLDthebibliography{#1}
  \setlength{\parskip}{0pt}
  \setlength{\itemsep}{1pt}
}


\begin{document}
\begin{abstract} 
In view of the Segal construction each category with a coherent operation gives rise to a cohomology theory. Similarly each open stable differential relation $\mathcal{R}$ imposed on smooth maps of manifolds determines cohomology theories $k^*_{\mathcal{R}}$ and $h^*_{\mathcal{R}}$; the cohomology theory $k^*_\r$ describes invariants of solutions of $\r$, while $h^*_\r$ describes invariants of so-called stable formal solutions of $\r$.  We  prove 
the bordism version of the h-principle: The cohomology theories $k^*_{\mathcal{R}}$ and $h^*_{\mathcal{R}}$ are equivalent for a fairly arbitrary open stable differential relation $\r$. Furthermore, we 
determine the homotopy type of $h^*_\r$. Thus, we show that for a fairly arbitrary open stable differential relation $\r$, the machinery of stable homotopy theory can be applied to perform explicit computations and determine invariants of solutions. 

In the case of the differential relation whose solutions are all maps, our construction amounts to the Pontrjagin-Thom construction. In the case of the covering differential relation our result is equivalent to the Barratt-Priddy-Quillen theorem asserting that the direct limit of classifying spaces $B\Sigma_n$ of permutation groups $\Sigma_n$ of finite sets of $n$ elements is homology equivalent to each path component of the infinite loop space $\Omega^{\infty}S^{\infty}$.
In the case of the submersion differential relation imposed on maps of dimension $d=2$ the cohomology theories $k^*_{\mathcal{R}}$ and $h^*_{\mathcal{R}}$ are not equivalent. Nevertheless, our methods still apply and can be used to recover~\cite{Sa5} the Madsen-Weiss theorem (the Mumford Conjecture). 
\end{abstract}
 
\maketitle


\date{\today}



\section{Introduction}
The \emph{h-principle} is a general observation that differential geometry problems can often be reduced to problems in (\emph{unstable}) homotopy theory. For example, let us consider three fairly different problems in differential geometry on the existence of structures on a given open smooth manifold $M$.

\begin{description}
\item[\bf Problem 1] Does $M$ admit a symplectic/contact structure? 
\item[\bf Problem 2] Does $M$ admit a foliation of a given codimension? 
\item[\bf Problem 3] Does $M$ admit a Riemannian metric of positive/negative scalar curvature? 
\end{description}

According to a very general theorem of Gromov~\cite{Gro} each of the mentioned differential geometry problems reduces to a  homotopy theory problem (which can be approached by standard homotopy theory methods). 

Similarly, the \emph{b-principle} is a general observation that differential geometry problems can often be reduced to problems in \emph{stable} homotopy theory. The known instances of the b-principle type phenomena include

\begin{description}
\item[\bf Example 1] the Barratt-Priddy-Quillen theorem,
\item[\bf Example 2] the Audin-Eliashberg theorem on Legendrian immersions,
\item[\bf Example 3] the Mumford conjecture on moduli spaces of Riemann surfaces. 
\end{description}

We propose a general setting for the b-principle, and show that the b-principle phenomenon occurs for a fairly arbitrary (open stable) differential relation $\mathcal{R}$ imposed on smooth maps, and therefore the machinery of stable homotopy theory can be applied to 
find invariants of solutions of the differential relation $\mathcal R$. Systematic explicit computations will be carried out in subsequent papers, e.g., see \cite{Sa2}.

{\ccc
The precise definition of an open stable differential relation can be found in \S\ref{a:1}. Loosely speaking, a coordinate invariant differential relation is \emph{open stable}  if its solution space is  open in the space of smooth maps and it satisfies the pullback property: given a pair of transverse maps $f$ and $g$ into the same target manifold, if $f$ is a solution, then $g^*f$ is also a solution.

}


\begin{figure}[ht]
\centering
\includegraphics[height=1.9in]{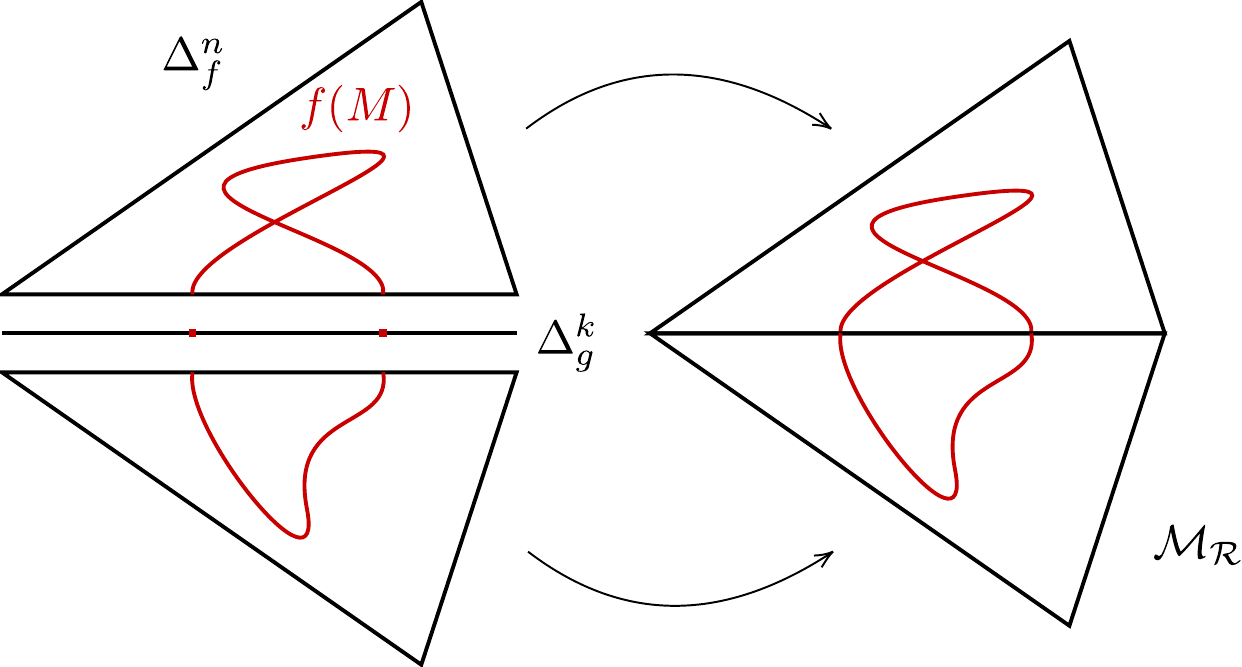}
\caption{Constructing $\M_\r$ for the differential relation of immersions of codimension $1$.}
\label{fig:13a}
\end{figure}

The proposed general b-principle is formulated in terms of the \emph{moduli space} $\M_\r$ of  solutions (cf. Mumford \cite{Mu}), which is an appropriate quotient space of all proper solutions of 
$\mathcal{R}$. 
We will work with a simplicial version of $\M_\r$. Its (informal) construction is fairly simple. Take a copy $\Delta^n_f$ of the standard simplex $\Delta^n$ of dimension $n$, one for each proper solution $f\co M\to \Delta^n$ transverse to each face $\Delta^k\subset \Delta^n$. Such a solution $f$ restricts to a solution over each face $\Delta^k\subset \Delta^n$. If a solution $f$ over $\Delta^n$ restricts to a solution $g$ over a face $\Delta^k$, then identify $\Delta^k_g$ with the corresponding face of $\Delta^n_f$. The obtained space $\mathcal{M}_\r=(\sqcup \Delta^n_f)/_\sim$ is the moduli space of solutions, see Figure~\ref{fig:13a}.

The moduli space $\M_\r$ of solutions satisfies a universal homotopy property (\S\ref{ss:5.1}), which makes it indispensable for computing invariants of solutions of $\r$.  Namely, given a proper solution $g\co M\to N$, there exists a smooth triangulation of $N$ such that $g$ is transverse to each simplex $\Delta$ of the triangulation. Choose an order on the set of vertices of the triangulation. Then each simplex $\Delta\subset N$ together with $g|g^{-1}\Delta$ has a unique counterpart in $\M_\r$ and therefore there is a classifying map $N\to \M_\r$. Its homotopy class depends only on $g$. 
The universal homotopy property immediately implies that invariants of $\M_\r$ correspond to  invariants of solutions of $\mathcal{R}$, for details see \S\ref{s:inv}. 

In some cases invariants of solutions of $\r$ are fairly complicated and a direct computation of these invariants seems improbable. On the other hand, it turns out that invariants of $\M_\r$ can be computed  (indirectly) by means of stable homotopy theory. Indeed, we will see that the moduli space of solutions $\M_\r$  of an open stable differential relation $\r$ possesses a rich structure of an \emph{H-space with a coherent operation} (i.e., a $\Gamma$-space; see~\S\ref{ss:2.2}). In particular, the group 
completion of $\M_\r$ is an infinite loop space classifying a cohomology theory $k^*_\r$. In order to identify $k^*_\r$, we also consider \emph{stable formal solutions} of $\r$---for a definition, see \S\ref{s:3}---and their moduli space $h\M_\r$; the moduli space $h\M_\r$ of stable formal solutions is produced, mutatis mutandis, by means of the mentioned construction of $\M_\r$. We show that $h\M_\r$  is itself an infinite loop space classifying a cohomology theory $h^*_\r$. 

\begin{example}[Submersion differential relation]\label{ex:1}
The moduli space for submersions of dimension $d$ is the disjoint union $\sqcup \BDiff M$ of classifying spaces of diffeomorphism groups of closed (possibly non-connected) manifolds of dimension $d$; there is one space $\BDiff M$ in the disjoint union for each diffeomorphism type.  The space $\sqcup \BDiff M$ is an H-space with a
coherent operation inherited from the obvious homomorphisms 
\[
\Diff M\times \Diff N\longrightarrow \Diff (M\sqcup N),
\]
\[
  \alpha\times \beta\mapsto \alpha\sqcup \beta 
\]
of diffeomorphism groups of manifolds $M$, $N$ and $M\sqcup N$. Hence, by the Segal construction~\cite{Se}, the space $B_1=\sqcup \BDiff M$ defines a spectrum $\{B_i\}$ such that each of the loop spaces $\Omega B_2$, $\Omega^2B_3$, ... is a group completion of $B_1$, see \S\ref{s:1.1}. 
The cohomology theory $k^*$ of solutions of the submersion differential relation $\mathcal R$ is associated with the infinite loop space of the spectrum $\{ B_i\}$. 

The cohomology theory $h^*$ of stable formal solutions of $\mathcal{R}$ is associated with the connective spectrum of the tangential structure $\BO_d\subset \BO$ of dimension $d$ (see \S\ref{a:3}). In particular, for every manifold $N$, the group $h^0(N)$ is the cobordism group of maps $f\co M\to N$ of dimension $d$ of smooth manifolds equipped with a representation of the stable vector bundle $TM\ominus f^*TN$ by a vector bundle over $M$ of dimension $d$, where $TM$ and $TN$ are the tangent bundles of $M$ and $N$ respectively, see \S\ref{a:3}. 
\end{example}

{\ccc
\begin{example}  Let $\r$ be an open stable differential relation whose solutions are maps of dimension $d$ with only singularities \cite{AVG} from a finite list $\tau$ of simple singularities of smooth maps of dimension $d$. Then the moduli space of solutions is a union 
of the classifying spaces $\BDiff F_\alpha$ of diffeomorphism groups of fibers $F_\alpha$ of maps with only $\tau$-singularities. 

On the other hand, by Theorem~\ref{th:14.5},  the space $h\M_\r$ is homotopy equivalent to the infinite loop space of a spectrum associated with a stable vector bundle $f\co B_\r\to \BO$ of dimension $-d$, see \S\ref{a:3}. It follows (see \cite{Kaz3}, \cite{Va}, and also \cite{Sz}, \cite{Sa6} and \cite{Sa2}) that the space $B_\r$ is a  union  of the classifying spaces $\BDiff \alpha$ of symmetries of singularity types $\alpha\in \tau$. 
\end{example}
}

{\ccc To summarize},
every open stable differential relation $\mathcal{R}$ (see \S \ref{a:1}) imposed on maps of a fixed dimension $d$ determines cohomology theories ${k}^*_{\mathcal{R}}$ of solutions of $\mathcal{R}$ and $h_{\mathcal{R}}^*$ of stable formal solutions of $\mathcal{R}$. Furthermore, there is a natural transformation $\mathfrak{A}_{\mathcal{R}}\co k^*_{\mathcal R}\to h^*_{\mathcal R}$
of cohomology theories, see \S\ref{ss:2.3}.

\begin{example}[Submersion differential relation]\label{ex:1.2aa}  Let $\r$ denote the differential relation whose solutions are submersions of dimension $d$. Then, for a manifold $N$, the homomorphism $k^0_\r(N)\to h_\r^0(N)$ is essentially  defined by associating with a class of a proper submersion
$f\co M\to N$ the class of $f$ equipped with the representation of the stable vector bundle $TM\ominus f^*TN$ by the kernel bundle of the differential $df$,  see Remark~\ref{r:14.4} and \cite[Proposition 4.1]{Se}.  
We will see that elements of $k^1_\r(N)$ are represented by pairs $(f, \alpha)$ of a submersion $f\co M\to N$ and a function $\alpha\co M\to \R$ such that the map 
\[
(f, \alpha)\co M\longrightarrow N\times \R
\] is proper and $\alpha(f^{-1}(x))\ne \R$ for each $x\in N$, see Figure~\ref{fig:1a}.  Elements of $h^1_\r(N)$ are defined similarly, except that now $f$ is an arbitrary function (not necessarily a submersion) equipped with a representation of the stable vector bundle $TM\ominus f^*TN$ by a vector bundle over $M$ of dimension $d$. Again, the natural isomorphisms $k^1_\r(N)\to h^1_\r(N)$ are essentially constructed by  representing $TM\ominus f^*TN$ by $\mathop\mathrm{ker } df$. 
For $i>1$, elements of $k^i_\r(N)$ and $h^i_\r(N)$ and natural homomorphisms $k^i_\r(N)\to h^i_\r(N)$  are defined similarly (cf. Hatcher~\cite{Ha} and Galatius--Randal-Williams~\cite{GR}).   
\end{example}

\begin{remark} \label{ex:1.3aa} Though historically in many applications only the functors $k^0_\r$ and $h^0_\r$ played an important role, it turned out that $k^i_\r$ and $h^i_\r$ for $i\ne 0$ are important in their own right. For example, counterparts of higher order functors $k^i_\r$ and $h^i_\r$  appear in the classification of TQFTs by Lurie (see \cite[section 2.2]{Lu}, especially Definition~2.2.9).  It is also worth mentioning that the role of functors $k^1_\r$ and $h^1_\r$ in the proof of Theorem~\ref{th:1} (as well as, for example, in the proof of the main theorem in \cite{GMTW}) is crucial. 
\end{remark}

{\ccc
\begin{definition}\label{d:b-principle}
An open stable differential relation $\r$ is said to satisfy the \emph{b-principle} if the natural transformation $\mathfrak{A}_\r$ is an equivalence\footnote{This is a refined version of the b-principle introduced in the paper~\cite{Sa}. 
The notion of the b-principle in the current paper is essentially stronger than that in \cite{Sa}.}. 
\end{definition}


When $\r$ satisfies the b-principle, we are in position to apply the Group Completion Theorem, e.g., see \cite{MS}.
We identify the set $\pi_0=\pi_0\M_\r$ of path components of $\M_\r$ with the corresponding subset of the Pontrjagin ring $H_*(\M_\r; \Z)$.  Then $\pi_0$ belongs to the center of $H_*(\M_\r)$. 

\begin{proposition}\label{th:1.10} Suppose that an open stable differential relation $\r$ satisfies the b-principle. Then 
$H_*(\M_\r)[\pi_0^{-1}] \simeq H_*(h\M_\r). $
\end{proposition}

\begin{example} If $d<0$, then the space $\M_\r$ is path connected. In general,  $\pi_0$ is the monoid of equivalence classes of manifolds of dimension $d$ with operation given by taking the disjoint union of manifolds. Two manifolds of dimension $d$ represent the same element in $\pi_0$, if they appear as fibers of a proper function satisfying $\r$. In particular, if all Morse functions on manifolds of dimension $d+1$ are solutions of $\r$, then $\pi_0$ is a group. To the author's knowledge, the monoids $\pi_0\M_\r$ are not known in the case of differential relations $\r$ of Morse functions with critical points of prescribed indices.   
\end{example}

}

The b-principle is a version of the \emph{h-principle}, which is a general homotopy theoretic approach to solving partial differential relations (see the foundational book by Gromov~\cite{Gro}, and a more recent book by Eliashberg and Mishachev~\cite{EM}).

{\ccc
\begin{remark}[B-principle vs h-principle] Every open coordinate-invariant differential relation $\mathcal{R}$ imposed on maps $M\to N$ of given smooth manifolds gives rise to spaces 
$\mathbf{Sol}$ of solutions, and 
$\mathbf{hSol}$ of formal solutions. 
The h-principle for $\mathcal{R}$ asserts that a canonical map  
$\mathbf{Sol} \longrightarrow \mathbf{hSol}$
is a weak homotopy equivalence of spaces.  

Similarly,  every open stable differential relation $\mathcal{R}$ imposed on maps of dimension $d$ gives rise to a cohomology theories $k_\r$ of solutions and $h_\r$ of formal solutions. 

In general, invariants of solutions to differential relations computed by means of
the h-principle reduction are stronger than their b-principle counterparts. On the other hand, the h-principle
invariants are often considerably harder to compute, than the invariants obtained by the b-principle reduction.
\end{remark}
}

 The following example shows that the b-principle for $\r$ may hold true even if the h-principle fails to be true.  

\begin{example} In the case $d=0$, the topological monoid $B_1$ of Example~\ref{ex:1} is homotopy equivalent to the disjoint union  $\sqcup B\Sigma_n$ of classifying spaces of permutation groups $\Sigma_n$ of finite sets of $n\ge 0$ elements, while $h^*$ is associated with the infinite loop space $\Omega^{\infty}S^{\infty}$ \cite{Fu}. In this case the b-principle assertion is equivalent to the Barratt-Priddy-Quillen theorem.   

\begin{theorem}[Barratt-Priddy-Quillen, \cite{BP}]\label{th:bpq} The group completion of the monoid $\sqcup B\Sigma_n$ is weakly homotopy equivalent to $\Omega^{\infty}S^{\infty}$. 
\end{theorem}

\end{example}

In general, it is difficult to solve differential relations. On the other hand, our main result---Theorem~\ref{th:1} below---guarantees that often the cohomology theory of solutions is equivalent to the cohomology theory of stable formal solutions. The latter can be studied by means of standard methods of stable homotopy theory~\cite{Au1}, \cite{Au2}, \cite{Au3}, \cite{Ga}, \cite{RW}, \cite{Sa2}, \cite{Sz}, \cite{Va}. In fact we determine (\S\ref{s:11}) the homotopy type of the infinite loop space classifying $h^*_\r$ for any open stable differential relation $\r$. 
To demonstrate the utility of the b-principle approach,  we carry out in \S\ref{s:8} a short computation (which originally is due to Wells \cite{We}) in the case of immersions. Further (systematic) computations will appear elsewhere, e.g., see \cite{Sa2}. 

\begin{theorem}\label{th:1} The b-principle {\ccc satisfies} every open stable differential relation $\mathcal{R}$ imposed on maps of dimension $d<1$, and for every open stable differential relation $\mathcal{R}$ imposed on maps of dimension $d\ge 1$ provided that each Morse function on a manifold of dimension $d+1\ge 2$ is a solution of the relation $\mathcal{R}$. 
\end{theorem}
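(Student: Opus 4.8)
The proof should establish that $\mathfrak{A}_{\mathcal{R}}\colon k^*_{\mathcal R}\to h^*_{\mathcal R}$ is an equivalence of cohomology theories. Since both are cohomology theories represented by infinite loop spaces (or spectra), and $\mathfrak{A}_{\mathcal R}$ is a natural transformation of such, it suffices to check it is an isomorphism on the coefficient groups, i.e.\ that the map of representing spectra is a weak equivalence on homotopy groups; equivalently one shows the map $\Omega^\infty\mathbf{hV}_{\mathcal R}$ (the space of stable formal solutions) receives a homology equivalence from the relevant classifying space / monoid of genuine solutions of $\mathcal R$. So the plan is to reduce the statement to a \emph{group-completion}-type comparison between a bordism-style category of solutions of $\mathcal R$ and the Thom-spectrum-style target classifying stable formal solutions, exactly in the spirit of the Barratt-Priddy-Quillen Theorem~\ref{th:bpq}, which is the $d=0$ base case.

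**Key steps.**

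First I would set up the scanning/Pontryagin–Thom map carefully: to a genuine solution $f\colon M\to N$ of $\mathcal R$ one associates its stable formal data $(TM\ominus f^*TN$ together with the jet section landing in $\mathcal R)$, and this is precisely $\mathfrak{A}_{\mathcal R}$ on representing spaces. Second, I would use the h-principle (\S\ref{s:4}) at the \emph{unstable} level: since $\mathcal R$ is open, for open manifolds the h-principle of Gromov gives that formal solutions over an open manifold are realized by genuine solutions; this handles the problem fiberwise over open pieces of $N$. Third — the heart of the matter — one must upgrade this to a statement about \emph{closed} source manifolds by a group-completion argument: the monoid of closed solutions, after group completion, should see all formal solutions. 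This is where the hypothesis on Morse functions enters. The extra Morse-function assumption for $d\ge 1$ is exactly what allows one to do \emph{surgery below the middle dimension} / trade handles, i.e.\ to connect two solutions that are formally stably equivalent by a bordism \emph{through solutions of} $\mathcal R$. Concretely: given a formal solution, build a genuine solution on the complement of finitely many balls using the open h-principle, and then fill in each ball — the filling is governed by a Morse function on a $(d+1)$-manifold, which by hypothesis is itself a solution of $\mathcal R$, so the pieces glue into a genuine solution representing the same class after stabilization.

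**The main obstacle.**

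The hard part will be the passage from the unstable h-principle (which only controls open manifolds, i.e.\ noncompact source) to a statement strong enough to compute the cohomology theory $k^*_{\mathcal R}$, which is built from \emph{compact} solutions and their moduli. One cannot simply quote Gromov; one needs a ``stabilization = group completion'' mechanism. I expect the cleanest route is to organize compact solutions into a topological monoid (or a bordism category, as in Example~\ref{ex:1}) via disjoint union, show this monoid maps to the infinite loop space representing $h^*_{\mathcal R}$, and then prove this map is a homology equivalence after group completion by a Madsen–Weiss-style argument: exhibit the classifying space of the bordism category of $\mathcal R$-solutions as the infinite loop space of a Thom spectrum, using the Morse-theoretic handle decompositions to reduce the cobordism category to its generators, which are precisely the elementary handle attachments — and these are solutions of $\mathcal R$ by the Morse hypothesis. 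Controlling this handle-trading argument so that every move stays inside the relation $\mathcal R$, and checking that no obstruction survives in the colimit over stabilization, is the genuinely delicate point; the $d<1$ case avoids it because for $d=0$ the statement is literally Theorem~\ref{th:bpq} and for $d<0$ the relevant spaces of solutions and formal solutions are both contractible or empty in a way that makes $\mathfrak{A}_{\mathcal R}$ trivially an equivalence.
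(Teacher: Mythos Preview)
Your strategy for $d\ge 1$ is essentially the paper's: organize solutions into a topological monoid, deloop to a classifying space of a cobordism-type category, and compare with the stable-formal side via a concordance argument where the Morse hypothesis is used to cap off null-cobordant level sets. The phrase ``surgery below the middle dimension / trade handles'' is misleading, though---no handle-trading or middle-dimension condition enters. What actually happens (Lemma~\ref{l:4.0}) is that the stable-formal data over a point gives a proper function $\alpha_0$ on a manifold $V_0$ whose regular level sets $M_i=\alpha_0^{-1}(a_i)$ are null-cobordant; one caps each $M_i$ by a bounding manifold $T_i$ and arranges a one-parameter family whose total map is a Morse function on a $(d+1)$-manifold. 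The hypothesis is invoked exactly once, to say that this Morse function is an $\mathcal R$-map, producing the required concordance. So your ``filling is governed by a Morse function on a $(d+1)$-manifold'' sentence is right; the surgery language around it is not.

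Your treatment of $d<1$ contains a genuine error. It is \emph{not} true that for $d<0$ the spaces of solutions and formal solutions are contractible or empty: the immersion relation in codimension $-d>0$ is an open stable differential relation with highly nontrivial cobordism groups (this is Wells' theorem, cited in \S\ref{s:8}), and for $d=0$ Theorem~\ref{th:1} is claimed for \emph{every} open stable $\mathcal R$, not only the covering relation, so it is not ``literally'' Theorem~\ref{th:bpq}. The paper's actual argument for $d<1$ (\S\ref{sec:8}) is a dimension count: in the $\mathcal C$-bundle description of $B(B\mathcal D_{\mathcal R})$, the obstruction to being a $B\mathcal C_{\mathcal R}$-bundle is that the function $\alpha_R$ on the fiber $f_R^{-1}(x)$ hits the value $a_R(x)$; since that fiber has dimension $d\le 0$, a general-position perturbation makes $\alpha_R\ne a_R$ everywhere. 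No appeal to Barratt--Priddy--Quillen or to triviality of the theories is made---rather, BPQ is recovered as the special case $d=0$, $\mathcal R=\text{coverings}$.
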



\begin{remark} The assumption on Morse functions in the statement of Theorem~\ref{th:1} is in a sense mild. Indeed, every function on a manifold can be approximated by $C^{\infty}$-close Morse functions. On the other hand, the subspace of solutions of an arbitrary open stable differential relation $\mathcal R$ imposed on maps of dimension $d$  is open in the space of $C^{\infty}$-smooth functions, i.e., every function on a manifold of dimension $d+1$ sufficiently $C^{\infty}$-close to a solution of $\mathcal R$ must also be a solution of $\mathcal R$ (see Definition~\ref{d:9a}). 
\end{remark}

\begin{remark} The natural transformation $\mathfrak{A}_{\mathcal{R}}$ for the submersion differential relation $\mathcal R$ imposed on maps of dimension $d\ge 1$ is not an equivalence (see Remark~\ref{r:4.3}). In this case $\mathfrak{A}_{\mathcal R}$ is closely related to the universal Madsen-Tillmann map \cite{GMTW}. The case $d=2$ is of particular interest \cite{EGM}, \cite{GMTW}, \cite{GR}, \cite{MW}, \cite{Ha} as it is related to the Madsen-Weiss theorem~\cite{MW} (the Mumford Conjecture~\cite{Mu1}), which asserts that the cohomology groups of the stable moduli space of Riemann surfaces are isomorphic to those of the Madsen-Tillmann spectrum; in Example~\ref{e:3} we show that in the non-negative degrees the cohomology theory classified by the Madsen-Tillmann spectrum coincides with the cohomology theory $h^*_{\mathcal R}$ of the submersion differential relation imposed on oriented maps of dimension $d$. In a subsequent paper \cite{Sa5} we explain the relation of the b-principle to the Mumford conjecture, and recover the latter.  
\end{remark}

Theorem~\ref{th:1} is a generalization of the mentioned Barratt-Priddy-Quillen theorem (especially see the interpretation by Fuks~\cite{Fu}), the Wells theorem for immersions~\cite{We}, Eliashberg theorems for $k$-mersions, Audin-Eliashberg theorems for Legendrian and Lagrangian immersions \cite{El}, \cite{Va}, \cite{Au3} and general theorems of Ando~\cite{An}, Szucs~\cite{Sz} and the author~\cite{Sa}. 
{\ccc
In fact, the results in the current paper are most closely related to those in \cite{Sa}. In \cite{Sa} we consider open stable differential relations $\mathcal{R}$ that are invariant with respect to the contact group $\mathcal{K}$ and satisfy the h-principle over \emph{closed} manifolds. The main theorem of \cite{Sa} asserts that there exists an infinite loop space $\Omega^{\infty}\mathbf{B}$ such that the $0$-th cobordism group of $\mathcal{R}$-maps to a given manifold $W$ is isomorphic to $[W, \Omega^{\infty}\mathbf{B}]$. In comparison to \cite{Sa}, in the present paper we introduce and work with a very general  notion of an  open stable differential relation $\r$ (that, for example, may not satisfy the mentioned $h$-principle), and give a more general and precise relation of solutions of $\r$ to formal solutions of $\r$ by working on the level of moduli spaces. Furthermore, we give a conceptual connection between group completions, h/b-principles and cobordism categories \cite{GMTW}.



For stable formal solutions of \cite{Sa}, there exists no moduli space. In contrast, in the current paper, we introduce (normal) stable formal solutions and construct its moduli space $h\M_\r$.  Under the assumptions in \cite{Sa}, in the present paper we show that $\M_\r$ is an infinite loop space isomorphic to $h\M_\r$, while the result of \cite{Sa} only implies that $[W, \Omega^{\infty}\mathbf{B}]\simeq [W, \M_\r]$ for any manifold $W$. The latter isomorphisms do not imply that the infinite loops spaces $\M_\r$ and $\Omega^{\infty}\mathbf{B}$ are isomorphic; in fact, they are not  (\ref{w:7.9}).  Our argument is different from that in \cite{Sa} and it has a wider range of applications. For example, in \cite{Sa5} we show that the method developed in the present paper may still apply even without the assumption on Morse functions. 
In review \cite{Sa4} we extend the setting of the current paper to the case of such classes of maps as, for example, embeddings  and strict Morse functions, i.e., Morse functions whose critical points have distinct critical values. We emphasize that \cite{Sa5} and \cite{Sa4} essentially rely on the results of the present paper.


}

\subsection{Organization of the paper}\label{s:1.1}
For reader's convenience we include an Appendix with a review of necessary notions concerning differential relations, partial sheaves,  and $(B,f)$ structures. 

Let $\mathcal{R}$ be an open stable differential relation imposed on maps of a fixed dimension $d$ (see \S\ref{a:1}); 
the \emph{dimension} of a map $M\to N$ of manifolds is defined to be the difference $\dim M-\dim N$ of dimensions of $M$ and $N$. Two simplest open stable differential relations, which the reader may want to keep in mind, are the immersion and submersion differential relations. In these cases \emph{$\mathcal R$-maps}, i.e., solutions of $\mathcal{R}$,  are respectively immersions and submersions $M\to N$ of arbitrary smooth manifolds  with $\dim M-\dim N=d$. 
Another extreme example of an open stable differential relation is the case where $\r$ is the differential relation whose solutions are all smooth maps of dimension $d$; in this case our construction 
is a fancy version of the Pontrjagin-Thom construction.

To begin with we observe that the moduli space $\M_\r$  of solutions is an H-space with a coherent operation (see \S\ref{ss:2.2}); e.g., on vertices the H-space operation 
$\M_\r\times \M_\r\to \M_\r$ is defined by $\Delta^0_f\times \Delta^0_g\mapsto \Delta^0_{f\sqcup g}$. On the 
other hand, Segal showed~\cite{Se} that the construction of the classifying space 
of an abelian topological monoid extends to that of 
$H$-spaces with coherent operations called (for a review, see \S\ref{asec:6}). Furthermore, the 
classifying space of an $H$-space with a coherent operation itself is an $H$-space with a coherent operation. 
Taking iterative classifying spaces 
\[
    \mathcal{M}_\r, B\mathcal{M}_\r, B^2\mathcal{M}_\r, \dots, 
\]
one obtains a spectrum $\mathbf{M}_\r$ classifying a cohomology theory $k^*_\r$ of solutions (\S\ref{ss:2.3}).

To determine the homotopy type of $\Omega^{\infty}\mathbf{M}_\r$ we introduce stable formal solutions. The definition of stable formal solutions may differ from what one would expect, e.g., in the case of the differential relation of submersions of dimension $2$ our definition is somewhat different from that in the proofs of the Mumford conjecture given in \cite{MW} and \cite{EGM}.  Let me pause here and provide a motivation  for our definition. 

Recall that in the h-principle theory a \emph{formal solution} of a differential relation $\r$ is a continuous fiber preserving map $F\co TM\to TN$ whose restriction  to each fiber $T_xM$ is a solution $T_xM\to T_yN$ of $\r$. For example, a formal immersion essentially is a fiberwise linear monomorphism $F\co TM\to TN$.   Formal solutions are not suitable to work with in stable homotopy theory since there is no meaningful pullback operation for formal solutions. For example, let $F$ be  the formal immersion $T\mathbb{C}\to T\mathbb{C}$ defined by $(x, v)\mapsto (x, iv)$, where $x$ is a point in $\mathbb{C}$ and $v$ a vector at $x$. Then, for the standard inclusion
$f\co \R\to \mathbb{C}$, the pullback $f^*F$ is not well-defined.

The counterpart of the notion of formal solutions in the b-principle is the notion of \emph{(normal) stable formal solutions}. Suppose that $M$ is a compact manifold embedded into $N\times \R^{\infty}$. Let $\mathbf{R}^{\infty}$ denote the total space of the trivial vector bundle over $M$ with fiber $\R^{\infty}$, and $\nu$ denote the normal bundle of $M$ in $N\times \R^{\infty}$. Then, intuitively, a (normal) stable formal solution is a continuous fiber bundle map $\mathbf{R}^{\infty}\to \nu$ whose restriction to each fiber is a product 
\[
    \R^{\infty}\simeq\R^k\times \R^{\infty-k}\longrightarrow \nu_0\times \R^{\infty-k}\simeq\nu_x
\]
of a solution of $\r$ and the identity map of $\R^{\infty-k}$, 
where  $\nu_x$ is the fiber of $\nu$ over $x\in M$, $k$ is an integer, and $\nu_0$ is a vector space for which $\nu_x\simeq \nu_0\times \R^{\infty-k}$.  
A (normal) stable formal solution $F$ leads to a (tangential) stable formal solution 
\[
TM\times \R^{\infty}= TM\oplus \mathbf{R}^{\infty}\xrightarrow{\id\oplus F}  TM\oplus \nu\simeq TN\times \R^{\infty},
\] 
which is a stabilized version of a formal solution. The reason we use normal stable formal solutions instead of seemingly simpler  tangential ones is that in contrast to the later ones the former ones behave well (on the nose not just up to homotopy) with respect to taking pullbacks.  In particular, for normal stable formal solutions there is a moduli space $h\M_\r$ and a cohomology theory $h^*_\r$ of stable formal solutions (\S\ref{s:3}, \ref{ss:2.3}).

{\ccc
Every genuine solution of $\r$ gives rise to a normal stable formal solution of $\r$, and therefore it is plausible that
there is a natural transformation $\mathfrak{A}_\r\co k^*_\r\to h^*_\r$. The b-principle asserts that $\mathfrak{A}_\r$ is an equivalence of cohomology theories (\S\ref{ss:2.3}). Let $\M_\r\to h\M_\r$ be the composition of the inclusion of $\M_\r$ into the infinite loop space of $k^*_\r$ and a map classifying the natural transformation $\mathfrak{A}_\r$.  We say that a map $A\to C$ of two $H$-spaces with coherent operations is a \emph{Segal group completion} if $C$ is an infinite loop space and the induced map $B^nA\to B^nC$ is a weak homotopy equivalence for all $n\ge 1$. In \S\ref{s:11} we determine the homotopy type of $h\M_\r$; it turns out that $h\M_\r$ is an infinite loop space. Therefore the b-principle is a group completion type theorem (see  \cite[Proposition 4.1]{Se}).  }

\begin{definition}[A reformulation of Definition~\ref{d:b-principle}] \label{d:9}
If, for a given open stable differential relation $\mathcal R$, the canonical map $\M_\r\to h\M_\r$ is a Segal group
completion, then we say that {\ccc $\mathcal R$ satisfies the b-principle.} 
\end{definition}

To establish the bordism principle, we 
consider the topological space $\mathcal{B}\M_\r$ which is a model 
for the classifying space $B\M_\r$. The definition of the space 
$\mathcal{B}\M_\r$ is similar to that of $\M_\r$; it is the union  of simplices $\Delta^n_{(f, \alpha)}$ parametrized by proper maps 
\[
(f, \alpha)\co V\to \Delta^n\times \R
\]
 such that $f$ is a solution of $\r$
transverse to the faces of $\Delta^n$ and $\alpha(f^{-1}(x))\ne \R$ for all $x\in \Delta^n$, see Figure~\ref{fig:1a} where $N=\Delta^n$. The proof 
that $\mathcal{B}\M_\r$ is a model for the classifying space of $\M_\r$ follows closely the argument of 
Galatius-Madsen-Tillman-Weiss \cite{GMTW}, and relies on the construction of classifying $\Gamma$-spaces by Segal~\cite{Se}. Here we introduce and use \emph{category valued partial $\Gamma$-sheaves} in order to show that $\mathcal{B}\M_\r$
is equivalent to $B\M_\r$ not only as a space, but also as an $H$-space with a coherent operation.

Similarly we construct a model $\mathcal{B}h\M_\r$ for the classifying space of $h\M_\r$. Again $\mathcal{B}h\M_\r$
is a simplicial complex of simplicies $\Delta^n_{(f, \alpha)}$ as in  
$\mathcal{B}\M_\r$ except that here $f\co V\to \Delta^n$ are stable formal solutions of $\r$; see Figure~\ref{fig:7a}, where $N=\Delta^n$ and  $f=(u, F)$ is a pair of a smooth map $u\co V\to \Delta^n$ and a structure $F$ of a stable formal solution covering $u$. 
By means of a singularity theoretic argument we show
that in the definition of $\mathcal{B}h\M_\r$ 
the \emph{rigid condition} $\alpha(u^{-1}(x))\ne \R$ can be replaced with the \emph{flexible condition} that 
each regular fiber of $(u, \alpha)$---which is a smooth closed manifold---is cobordant to zero.
The replacement does not change the homotopy type of $\mathcal{B}h\M_\r$; see Figure~\ref{fig:8a}, where again $N=\Delta^n$ and $f$ is a pair $(u,F)$. 

Assuming the flexible condition, 
we can show that $\mathcal{B}h\M_\r$ admits an alternative definition: it is homotopy equivalent to its subcomplex of  simplicies $\Delta^n_{(f, \alpha)}$, 
one for each proper map $(f, \alpha)\co V\to \Delta^n\times \R$ such that $f$ is a (genuine) solution of $\r$
transverse to the faces of $\Delta^n$. Indeed, given a proper map $(f, \alpha)\co V\to \Delta^n\times \R$, where $f$ is a stable formal solution, we 
can stretch path components of $(f, \alpha)(V)\subset \Delta^n\times \R$ to infinity so that each component of $V$ 
is open. Then we can further modify $(f, \alpha)$ so that $f$ is an (unstable) formal solution. The modification is justified by the Destabilization Lemma~\ref{l:11.1}, which is a general version (for arbitrary open stable differential relations) of the destabilization constructions in \cite[Section 11]{Sa} and \cite[Section 2.5]{EGM}. Finally, the Gromov h-principle \cite{Gro} for \emph{open} manifolds allows us 
to deform the obtained formal solution $V\to \Delta$ of $\r$ to 
a genuine solution of $\r$.

Thus, the space $\mathcal{B}\M_\r$ can be identified with a simplicial subcomplex of $\mathcal{B}h\M_\r$ that consists
of simplicies $\Delta^n_{(f, \alpha)}$ with $\alpha(u^{-1}(x))\ne \R$, where $f=(u, F)$. 
In the case of differential relations imposed on maps of dimension $d<1$, the classifying spaces $\mathcal{B}\M_\r$ and $\mathcal{B}h\M_\r$ are clearly the same (\S\ref{sec:8}), which implies  Theorem~\ref{th:1} in the case $d<1$. The case $d\ge 1$ of Theorem~\ref{th:1} is  established by means of a singularity theoretic construction that produces ``holes" in fibers of $(f, \alpha)$. 

Finally, we determine (\S\ref{s:11}) the homotopy type of the space $h\M_\r$ and list several applications (\S\ref{s:8}). 


\subsection{Acknowledgments} I am thankful to the anonymous referee for many important suggestions
that greatly improved the presentation. I am also thankful to the participants of the 
Topology seminar in CINVESTAV---where I gave a series of talks presenting the proof of the result---and especially to Ernesto Lupercio for many remarks and suggestions. 
Many ideas and constructions in the current paper are motivated/borrowed from proofs of the Mumford Conjecture, notably from very influential papers \cite{MW} by Madsen-Weiss and \cite{GMTW} by Galatius, Madsen, Tillmann and Weiss. I would like to thank Victor Turchin for carefully reading the latest version of the preprint and for many helpful comments. I appreciate the hospitality of the Toronto University where the first draft of the paper was written, as well as the hospitality of Institut des Hautes \'{E}tudes Scientifiques and the Max-Planck-Institut f\"{u}r Mathematik where the paper acquired the present form.

{\ccc
\section{Differential relations}\label{a:1}
}

Let $M$ and $N$ be two manifolds. A (smooth) \emph{map germ} $f\co  (M, x)\to (N, y)$ is represented by a smooth map $f$ from a neighborhood of a point $x$ in $M$ to a neighborhood of a point $y$ in $N$ such that $f(x)=y$. Two maps $f$ and $g$ represent the same map germ if their restrictions to some neighborhood of $x$ in $M$ coincide. An \emph{unfolding} of a map germ $f\co  X\to Y$ at $x$ is a cartesian diagram of smooth map germs 
\[
\begin{CD}
(P, p) @>F >> (Q, F(p)) \\
@A iAA @AjAA \\
(X, x) @> f>> (Y, f(x)) 
\end{CD}
\]
where $P$ and $Q$ are smooth manifolds, $p$ is a point in $P$, and $i$ and $j$ are immersion germs such that $j$ is transverse to $F$. An unfolding is \emph{trivial} it there are map germs $r\co  P\to X$ at $p$ and $s\co  Q\to Y$ at $F(p)$ such that $r\circ i=\id_X$, $s\circ j=\id_Y$ and $f\circ r=s\circ F$. A map germ $f$ is \emph{stable} if each of its unfoldings is trivial. An unfolding is \emph{stable} if the map $F$ in the diagram is stable as a map germ. Any two stable unfoldings of the same dimension of a map germ are isomorphic, see \cite[page 86]{GWPL}. 
It is also known that the subset of map germs with no stable unfoldings is of infinite codimension in the space of all map germs (e.g., see \cite{GWPL}). Thus the stability condition in Definition~\ref{d:9a} below is a mild condition.

Let $d$ be an arbitrary integer; it will be fixed throughout the section. Let $J^k(n)$ denote the set of equivalence classes $[f]$ of map germs 
\begin{equation}\label{eq:germ}
f\co  (\R^{n+d},0) \longrightarrow (\R^n,0)
\end{equation}
where two map germs $f, g$ are equivalent if and only if all derivatives at $0$ of $f$ and $g$ of order $\le k$ are the same. The space $J^k(n)$ has an obvious structure of a smooth manifold diffeomorphic to $\R^{N}$ for some positive integer $N$. It admits a smooth action of the group $\mathcal{G}(n)$ of pairs  $(\alpha, \beta)$ of diffeomorphism germs $\alpha$ of $(\R^{n+d}, 0)$ and $\beta$ of $(\R^{n}, 0)$. An element $(\alpha, \beta)$ takes $[f]$ onto $[\beta\circ f\circ \alpha^{-1}]$. There are inclusions $J^k(n)\subset J^k(n+1)$ that take $f$ onto $f\times \id_{\R}$ where $\id_{\R}$ is the identity map of $\R$. The colimit of these inclusions is denoted by $\mathbf{J}$. Similarly, there are inclusions $\mathcal{G}(n)\to \mathcal{G}(n+1)$, whose colimit is denoted by $\mathbf{G}$. The actions of the groups $\mathcal{G}(n)$ on $J^k(n)$ induce an action of the group $\mathbf{G}$ on $\mathbf{J}$.

\begin{definition}\label{d:9a} A \emph{stable differential relation} $\mathcal{R}$ of order $k$ is defined to be an arbitrary $\mathbf{G}$-invariant subset of $\mathbf{J}$ of equivalence classes of stable map germs. A \emph{solution} of $\mathcal{R}$ is a smooth map of dimension $d$ each map germ of which has a stable unfolding that in some (and hence any) local coordinates represents a point in $\mathcal{R}$. A differential relation $\mathcal{R}$ is \emph{open}, if it is open as a subset of $\mathbf{J}$.    
A solution of a differential relation $\mathcal{R}$ is also called an \emph{$\mathcal{R}$-map}.
\end{definition}

For example, every Thom-Boardman differential relation~\cite{Sa} is an example of a stable differential relation. Its solutions are maps with prescribed Thom-Boardman singularities~\cite{Bo}. Immersion and submersion differential relations are the simplest Thom-Boardman differential relations.

Stable differential relations $\mathcal{R}$ satisfy two properties that are important for our purposes. First, given a solution $f\co M\to N$, the map $f\times \id_\R\co M\times \R\to N\times \R$ is also a solution. And, second, given a solution $f$ over $N$ and a smooth map $u$ to $N$ transverse to $f$, the pullback $u^*f$ is a also a solution.

\section{The moduli space $\mathcal{M}_\r$ of solutions of $\mathcal{R}$}\label{ss:2.1}

For a non-negative integer 
$n$, the \emph{extended $n$-simplex} $\Delta^n_e$ is defined to be the opening of the standard $n$-simplex $\Delta^n$ in 
\[
   \R^n\simeq \{\, (x_0, \dots, x_n)\in \R^{n+1}\, |\, x_0+\cdots+x_n=1\,\}; 
\] 
recall \cite{Gro} that the \emph{opening} of a subspace $A$ of a topological space $V$ is an 
arbitrary small but non-specified open neighborhood of $A$ in $V$. 
As in the case of standard simplices each extended simplex comes with \emph{face} and \emph{degeneracy} maps
\[
\delta_i\co \Delta^{n-1}_e\to \Delta^n_e,  \qquad  \delta_i\co (x_0, \dots, x_{n-1})\mapsto (x_0, \dots, x_{i-1}, 0, x_{i}, \dots, x_{n-1}),
\]
\[
\sigma_j\co \Delta^n_{e}\to \Delta^{n-1}_e, \qquad  \sigma_j\co (x_0, \dots, x_{n})\mapsto (x_0, \dots, x_{j}+x_{j+1}, \dots, x_{n}),
\]
where the indices $i$ and $j$ range from $0$ to $n$ and $n-1$ respectively. 

We adopt a few conventions. First, in order to avoid set-theoretic issues, we often equip a proper map $f\colon V\to N$ of smooth manifolds with 
a lift to an embedding
\begin{equation}\label{eq:2.1.1}
 (f, \beta)\co V\hookrightarrow N\times \R^{\infty}, 
\end{equation}
such that the closure of the image of $\beta$ is compact;
every proper map $f$ admits such a lift, and 
the space of lifts of $f$ is weakly contractible in $C^{\infty}$ Whitney topology. 
Following \cite[Section 2]{MW}
 we call a map (\ref{eq:2.1.1}) a \emph{graphic
map} to $N$. Note that a graphic map (\ref{eq:2.1.1}) is determined by the embedded submanifold $V$. Similarly, a graphic map $(f, \beta)$ to $\Delta^n_e$ is determined by an equivalence class---called a \emph{germ submanifold}---of embedded submanifolds 
\[
   (f, \beta)\co V\hookrightarrow \R^n\times  \R^{\infty},
\]
two embedded submanifolds represent the same germ submanifold if they coincide over  $\Delta^n_e\times \R^{\infty}$. To simplify notation, we often tacitly  identify a graphic map $(f, \beta)$ to $\Delta^n_e$ with any of its representatives. 

Second, all maps to triangulated manifolds in this paper are assumed to be transverse to each simplex
of the triangulation. Similarly, for each graphic map $(f,\beta)$ to $\Delta^n_e$, 
the underlying map $f$ is required to be transverse to every face map of $\Delta^n_e$. 

\begin{lemma}\label{l:2.2} Given a graphic $\mathcal{R}$-map $(f, \beta)$ to $\Delta^n_e$, each face and degeneracy map with target $\Delta^n_e$ pulls back a graphic $\mathcal{R}$-map to $\Delta^{n-1}_e$ and $\Delta^{n+1}_e$ respectively. 
\end{lemma}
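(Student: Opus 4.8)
The plan is to dispose of the face map $\delta_i\colon\Delta^{n-1}_e\to\Delta^n_e$ and the degeneracy map $\sigma_j\colon\Delta^{n+1}_e\to\Delta^n_e$ in turn. The face case is routine; the content of the lemma lies in the degeneracy case, where the pulled-back map is not a restriction of $f$ but a suspension of it, so that the hypothesis that $\mathcal{R}$ is \emph{stable} has to be used.

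For a face map $\delta_i$, note that $\delta_i$ is an affine embedding onto the hyperplane $\{x_i=0\}\subset\Delta^n_e$; since $f$ is transversal to $\delta_i$, the pullback $\delta_i^{*}V$ is canonically the codimension one submanifold $V_i:=f^{-1}(\{x_i=0\})\subset V$, carrying the map $f_i:=f|_{V_i}$ and the embedding $\alpha_i:=\alpha|_{V_i}$. I would then check the four requirements of an $\mathcal{R}$-embedding to $\Delta^{n-1}_e\cong\{x_i=0\}$: that $f_i$ is proper (the restriction of a proper map to a closed subset); that $\alpha_i$ is an embedding with image in a compact set and $(f_i,\alpha_i)$ is an embedding (the restriction of $(f,\alpha)$ to a submanifold); that $f_i$ is an $\mathcal{R}$-map (this is exactly the property of open stable relations recalled in \S\ref{a:1}: a solution of $\mathcal{R}$ restricts to a solution over the transversal preimage of a submanifold); and that $f_i$ is transversal to every face of $\Delta^{n-1}_e$ — here, if $F\subset\Delta^{n-1}_e$ is a face, then $\delta_i(F)$ is a face of $\Delta^n_e$ lying inside $\{x_i=0\}$, and transversality of $f$ to both $\{x_i=0\}$ and $\delta_i(F)$ forces $f|_{f^{-1}(\{x_i=0\})}\pitchfork F$ by the standard restriction lemma for transversality.

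For a degeneracy map $\sigma_j$, I would first observe that $\sigma_j\colon\Delta^{n+1}_e\to\Delta^n_e$, being an affine-linear surjection of affine spaces, is a trivial line bundle: the assignment $x\mapsto(\sigma_j(x),x_j)$ identifies $\Delta^{n+1}_e$ with $\Delta^n_e\times\R$ compatibly with $\sigma_j$ and the projection $\Delta^n_e\times\R\to\Delta^n_e$. Under this identification the pulled-back map $\sigma_j^{*}f$ becomes $f\times\id_\R\colon V\times\R\to\Delta^n_e\times\R$; in particular it has dimension $d$, it is proper since $f$ and $\id_\R$ are, and it is an $\mathcal{R}$-map precisely because $\mathcal{R}$ is stable, i.e.\ $f\times\id_\R$ solves $\mathcal{R}$ whenever $f$ does. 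For the embedding datum I would take as the second component any embedding of $V\times\R$ into $\R^\infty$ with image in a compact set — for instance $\alpha$ on the factor $V$ and a bounded embedding of $\R$ on the factor $\R$ — after which the full map to $\Delta^{n+1}_e\times\R^\infty$ is automatically an embedding. It then remains to verify that $f\times\id_\R$ is transversal to every face of $\Delta^{n+1}_e$. Tracing the cofaces $\{x_k=0\}$ through the identification $\Delta^{n+1}_e\cong\Delta^n_e\times\R$ shows that, according as $k<j$, $k=j$, $k=j+1$, or $k>j+1$, one obtains $F_k\times\R$ (with $F_k$ a coface of $\Delta^n_e$), $\Delta^n_e\times\{0\}$, the graph of the coordinate function $y_j\colon\Delta^n_e\to\R$, or again $F_k\times\R$. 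Hence an arbitrary face of $\Delta^{n+1}_e$, being an intersection of cofaces, is readily seen to have one of the three shapes $F\times\R$, $F\times\{0\}$, or $\{(y,y_j):y\in F\}$ for a suitable face $F$ of $\Delta^n_e$; and in each shape the transversality of $f\times\id_\R$ follows from $f\pitchfork F$ (valid since $F$ is a face of $\Delta^n_e$), from $\id_\R\pitchfork\{0\}$, and from the elementary fact that, whenever $g\pitchfork F$, the map $g\times\id_\R$ is transversal to the graph of any smooth function $F\to\R$.

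Conceptually nothing deep is involved: the face case uses only closure of open stable relations under restriction to transversal preimages, and the degeneracy case uses only closure of stable relations under taking products with $\id_\R$, both of which are part of the setup recalled in the Appendix; and the $\R^\infty$-factor is never an obstacle, since every smooth manifold embeds in $\R^\infty$ with image in a compact set. The only real work — and hence the main obstacle — is the combinatorial bookkeeping in the degeneracy case: checking that every face of $\Delta^{n+1}_e$, read off in the trivialization $\Delta^n_e\times\R$, is of one of the three shapes above, and that $f\times\id_\R$ is transversal to each.
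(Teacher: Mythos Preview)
Your proposal is correct and follows essentially the same approach as the paper, though the paper is far terser: it handles the face case by invoking the unfolding definition directly and dispatches the degeneracy case in a single line by observing that $\sigma$ is a submersion, so $\sigma^*f$ is automatically a proper $\mathcal{R}$-map. Your explicit trivialization $\sigma_j^*f\cong f\times\id_\R$, your choice of embedding datum on $V\times\R$, and your transversality bookkeeping for faces of $\Delta^{n\pm 1}_e$ all fill in details the paper leaves implicit; the only cosmetic discrepancy is that the paper places the emphasis on the face case (where the unfolding argument lives) rather than the degeneracy case.
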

\begin{proof} Let $\delta$ be one of the face maps of $\Delta^n_e$. Since $f$ is transverse to $\delta$, there is a smooth manifold $W$ defined by the pullback diagram  
\[
\begin{CD}
W  @>\subset>> V  \\
@V \delta^*f VV @V f VV \\
\Delta^{n-1}_e @>\delta>> \Delta^{n}_e. 
\end{CD}
\]
A counterclockwise rotation of this pullback diagram shows that at each point $w\in W$ the map germ $f$ at $w$ unfolds the map germ $\delta^*f$ at $w$ (see \S\ref{a:1}). Since the differential relation $\mathcal{R}$ is stable, this implies that $\delta^*f$ is an $\mathcal{R}$-map. Also $\delta^*f$ is proper. Consequently, the pullback $(\delta^*f, \beta|W)$ is a 
graphic $\mathcal{R}$-map to $\Delta^{n-1}_e$. The statement of Lemma~\ref{l:2.2} for the degeneracy maps  follows from the fact that each degeneracy map $\sigma$ is a submersion and hence $\sigma^*f$ is a proper $\mathcal{R}$-map.
\end{proof}

A \emph{graphic $\r$-map} $f$ to a simplicial set $C_{\bullet}$ is a family 
of graphic $\r$-maps $f_x$ to extended simplicies $\Delta^n_e$, one for each $n$-simplex $x$ in $C_{\bullet}$, 
such that 
\[
(\delta_i\times \id_{\R^{\infty}})^*(f_x, \beta_x)=(f_{d_ix}, \beta_{d_ix})
\]
and 
\[
(\sigma_j\times \id_{\R^{\infty}})^*(f_x, \beta_x)=(f_{s_jx}, \beta_{s_jx})
\]
for all $i,j$, where $d_i$ and $s_j$ are 
face and degeneracy operators in the simplicial set $C_{\bullet}$. To simplify notation, occasionally we will omit $\beta$ from the notation. 
Lemma~\ref{l:2.2} shows that the correspondence $F\co \mathbf{Sset}^{op}\to \mathbf{Set}$ that associates to each simplicial set $C_{\bullet}$ the 
set of graphic $\r$-maps to $C_{\bullet}$ is a functor.

\begin{definition}\label{d:2.2}
Given a contravariant set valued functor $F$ on a category $\mathcal{C}$, an object 
$\M\in \mathcal{C}$ is said to be the \emph{moduli space} for $F$ if there exists
a natural equivalence $\gamma_{\M}\co F\to \mathbf{Hom}(-, \M)$.
It follows that if the moduli space for $F$ exists, then it 
is unique up to isomorphism.  
\end{definition}

\begin{theorem}\label{th:-1} Let $F\co \mathbf{Sset}^{op}\to \mathbf{Set}$
be the functor that associates to each simplicial set $C_{\bullet}$ the 
set of graphic $\r$-maps to $C_{\bullet}$. Then 
there exists a moduli space $X_\bullet$ for  $F$.  
\end{theorem}

\begin{proof}[Proof of Theorem~\ref{th:-1}]
Let $X_n$ denote the set of smooth (germ) submanifolds $V$ in $\Delta^n_e\times \R^{\infty}$ such that 
the inclusion map is a graphic $\mathcal{R}$-map. Essentially, Lemma~\ref{l:2.2} turns the correspondence $[n]\mapsto X_n$ into a simplicial set $X_{\bullet}$. For $i=0,...,n$, its $i$-th boundary operator $d_i\co X_n\to X_{n-1}$ takes $V$ to its pullback by means of $\delta_i$ defined in Lemma~\ref{l:2.2}. Similarly, for $j=0,...,n-1$, the $j$-th degeneracy operator $s_j\co X_{n-1}\to X_{n}$ pulls $V$ back by means of $\sigma_j$. 

Let $f$ be a graphic $\r$-map to a simplicial set $\mathcal{C}_\bullet$.
Then,  for each simplex $x$ in $\mathcal{C}_\bullet$, 
the pair
$(x, f_x)$ corresponds to a unique simplex in $X_\bullet$; hence $f$ determines
a map $\mathcal{C}_\bullet\to X_\bullet$. The so-defined
transformation
      $F \to \mathbf{Hom}(-, X_{\bullet})$ is an equivalence.
\end{proof}

\begin{definition}\label{d:1.2.1}
The geometric realization $|X_{\bullet}|$ of the simplicial set $X_{\bullet}$ is called 
the \emph{moduli space $\M_{\mathcal R}$} of solutions of the differential relation $\mathcal R$. 
\end{definition}


\begin{remark} The space $\M_\r$ informally defined in the introduction is a ``fat" version of $|X_\bullet|$ containing extraneous degenerate simplicies. The space $\M_\r$ of introduction is homotopy equivalent to the moduli space $\M_\r$ of Definition~\ref{d:1.2.1}, e.g., see \cite{Se}. 
\end{remark}

The moduli space $\M_\r$ of solutions is the principal object of study in this paper. In the rest of the section we give two other definitions of $\M_\r$; we will use interchangeably all three definitions. 

We observe that the space
$\M_\r=|X_\bullet|$ possesses a
\emph{homotopy classifying property}; namely, 
$X_\bullet$ is the moduli space in $\mathop{\mathrm{Ho}}(\mathbf{Sset})$ 
for the functor of concordance classes of $\r$-maps. Recall, that 
the homotopy category $\mathop{\mathrm{Ho}}(\mathbf{Sset})$ is formed by 
formally inverting the weak equivalences in $\mathbf{Sset}$. The category
$\mathop{\mathrm{Ho}}(\mathbf{Sset})$
is equivalent to the category of Kan complexes and simplicial homotopy classes
of maps, e.g., see ~\cite{GJ}.

\begin{definition} Two graphic $\r$-maps $f_0, f_1$ to a simplicial set
$C_\bullet$ are \emph{concordant} if there exists a graphic
$\r$-map $f$ to the simplicial set $C_\bullet\times \Delta^1_\bullet$ such that 
$$\id\times \delta_i\co C_\bullet\times \Delta^0_\bullet\to C_\bullet\times \Delta^1_\bullet$$
pulls $f$ back to $f_i$ for $i=0,1$, where $\Delta^n_\bullet$ is the standard $n$-simplex $\mbox{hom}(-, [n])$.  
\end{definition}

Let $\mathop{\mathrm{Ho}}F$ be the set valued functor on $\mathop{\mathrm{Ho}}(\mathbf{Sset})$
that associates to a simplicial set $K$ the set of concordance classes
of $\r$-maps to $K$. Then in view of the natural equivalence
\begin{equation}\label{eq:2.1.2.1}
    \gamma \co \mathop{\mathrm{Ho}} F \longrightarrow  [-, X_{\bullet}],
\end{equation}    
the complex $X_{\bullet}\in \mathop{\mathrm{Ho}}(\mathbf{Sset})$ is the moduli 
space for $\mathop{\mathrm{Ho}}F$. As above, the condition (\ref{eq:2.1.2.1})
determines the simplicial complex 
$\M_\r=|X_{\bullet}|$ up to homotopy equivalence.

Alternatively, let $\mathcal{F}(N)$ denote the set of submanifolds $V\subset N\times \R^{\infty}$ such that the inclusion $(f, \beta)$ is a graphic 
$\r$-map to a smooth manifold $N$. Given a smooth map $\varphi\co M\to N$, let $\mathcal{F}(\varphi)$
denote the map 
\[
(f, \beta) \mapsto (\varphi\times  \id_{\R^{\infty}})^*(f, \beta)
\]
from the subset of $\mathcal{F}(N)$ of graphic $\r$-maps $(f, \beta)$ with $f$
transverse to $\varphi$ to the set $\mathcal{F}(M)$.
Then $\mathcal{F}_\r=\mathcal{F}$ is a set valued partial sheaf (\S\ref{a:1.5}) on the 
category of smooth manifolds and smooth maps. 
It follows that $\M_\r$ is the geometric realization of $\mathcal{F}_\r$. 

Thus we have given three equivalent definitions of $\M_\r$; namely, $\M_\r$ is 
the geometric realization of (1) the moduli space for $F$, 
(2) the moduli space for $ \mathop{\mathrm{Ho}} F$, and
(3) the partial sheaf $\mathcal F_\r$.

\begin{example}  If every map of dimension $d$ is a solution of a 
differential relation $\mathcal R$, then $\M_{\mathcal R}$ is homotopy
equivalent to the topological space $\Omega^{\infty+d}\MO$. The moduli space of submersions 
of dimension $d$ is a model for $\sqcup \BDiff M$, where $M$ ranges over 
diffeomorphism types of closed (possibly not path connected) manifolds of dimension $d$. For immersions of 
codimension $d$, the moduli space is homotopy equivalent to 
the infinite loop space of the spectrum for the normal $(B, f)$ structure
$\BO_d\subset \BO$, cf. Example~\ref{ex:1}.  The second example follows
from the homotopy classifying 
property for $\M_{\mathcal R}$, while the two other cases follow from the 
b-principle  (for details, see Examples in \S\ref{s:11}). 
Similarly, the moduli space for special generic fold maps is related to diffeomorphism groups of spheres~\cite{Sa3}. Also, as we show in a review \cite{Sa4}, 
moduli spaces are defined not only for solutions of differential relations, but also for any class $\mathcal{C}$ of smooth maps of a fixed dimension $d$ provided that $\mathcal{C}$ is closed with respect to taking pullbacks, i.e., if $f\in \mathcal{C}$  and $g$ is any smooth map transverse to $f$ then $g^*f\in \mathcal{C}$. For example, the moduli space for embeddings of codimension $d$ is homotopy equivalent to the Thom space of the universal vector bundle over $\BO_d$.  
\end{example}


\section{The moduli space $h\M_\r$ of stable formal solutions of $\mathcal R$}\label{s:3}
Recall that a \emph{formal solution} of a differential relation $\mathcal{R}$ is a continuous map $f\co M\to N$ of smooth manifolds together with a continuous family $F=\{F_x\}$ of map germs 
\[
    F_x\co (T_xM, 0)\longrightarrow (f^*T_{f(x)}N, 0)
\]
parametrized by $x\in M$ such that each map germ $F_x$ is a solution of $\mathcal {R}$. For example,  
a formal immersion is essentially a fiber bundle map $F\co TM\to TN$ which restricts on each fiber $T_xM$ to a linear monomorphism. A \emph{stable formal solution} is defined similarly by means of stable map germs, or s-germs for short. We will see shortly that in contrast to formal solutions, stable formal solutions behave well under pullbacks and, in particular, there is a well-defined moduli space for stable formal solutions. 
 
 {\ccc
 Often a formal solution is defined to be a family of $k$-jets, not germs. However, for an open differential relation of order $k$ the two notions are equivalent as the space $R$ of formal (germ) solutions is homotopy equivalent to the space $J$ of formal ($k$-jet) solutions. In the present paper, we work with map germs since, in general, the formulas and statements involving germs are simpler than those for jets. 

\begin{lemma} The projection $R\to J$ is a homotopy equivalence. 
\begin{proof} The space $R$ can be identified with the space of sections of a fiber bundle over $M\times N$ with fiber $R_0$ given by the space of $\r$-map germs $(R^m, 0) \to (R^n, 0)$. Similarly, the space $J$ is a space of sections of a fiber bundle over $M\times N$ with fiber $J^k_0$ of $k$-jets of $\r$-map germs $(R^m, 0) \to (R^n, 0)$. The $k$-jet of a map germ $(\R^m, 0)\to (\R^n, 0)$ can be uniquely represented by the map germ of a polynomial map of degree $k$. Hence there is an inclusion $J^k_0\to R_0$, which is homotopy inverse to  
the projection $R_0\to J^k_0$. Hence, the projection $R\to J$ is a homotopy equivalence. 
\end{proof}
 \end{lemma}}

\begin{definition} Let $M$ and $N$ be two smooth manifolds, and $x$ and $y$ are two points in $M$ and $N$  respectively. 
 An \emph{s-germ} is an equivalence class of map germs 
\begin{equation}\label{eq:3.1.0}
g\co (M\times \R^k, x\times 0)\to (N\times \R^k, y\times 0),
\end{equation} 
where $k$ is some non-negative integer;   the relation is generated by the equivalences $g\sim g\times\id_\R$, where
$\id_\R$ is the identity map germ of $\R$ at $0$.   
\end{definition}

The equivalence class of a map germ $g$  is denoted by the same symbol $g$. In fact, we write $g\co (M, x)\to (N,y)$ even if the s-germ $g$ does not admit a representation by a map of an open neighborhood of $x$ in $M$. Furthermore, 
we suppress the reference points from the notation 
if $M$ and $N$ are vector spaces and the reference points are their respective origins. 

The set of all stable map germs from $M$ to $N$ forms the total space $E=E(M,N)$ of a fiber bundle over $M\times N$. More precisely, let $E_0$ denote the infinite dimensional vector space of s-germs $\R^m \to \R^n$ where $m$ and $n$ are the dimensions of $M$ and $N$ respectively. In fact, since 
every s-germ admits a representation by a map of an open neighborhood of $0$ in $\R^m\times \R^i$ for some finite $i$, the space $E_0$ is the colimit of vector spaces of map germs $\R^{m+i}\to \R^{n+i}$ for $i\ge 0$.  
There is a left action on $E_0$ by the group $G=\mbox{GL}_m\times \mbox{GL}_n$ of pairs of linear isomorphisms; a pair $(g, h)$ of linear transformations sends a stable map germ $u$ to 
\[
(h\times \id) \circ u  \circ  (g^{-1} \times \id).
\]
 Let $P$ denote the total space of the principle $G$-bundle associated with the tangent bundle of $M\times N$; a point in $P$ corresponds to a  $4$-tuple $(x, y, \tau_M, \tau_N)$ of a point $x$ in $M$, a point $y$ in $N$, and bases $\tau_M$ and $\tau_N$ of $T_xM$ and $T_yN$ respectively. The mentioned space $E(M,N)$ is the Borel construction 
\[
      E(M, N) = P\times_G E_0. 
\]
Thus, a point in $E$ is a triple $(x, y, f)$ of a point $x\in M$, a point $y\in N$ and an s-germ $f\co T_xM\to T_yN$. Also, the space $E$ is 
the total space of a fiber bundle 
$E\to M$; its projection is given by composing the projection of $E$ onto $M\times N$ with the projection of $M\times N$ onto the first factor.

\begin{definition}\label{d:3.2}  Let $u\co M\to N$ be a continuous map of smooth manifolds. A \emph{tangential stable formal $\mathcal R$-map} over $u$ is a continuous family $F$ of s-germs 
\begin{equation}\label{eq:3.1.1}
    F_x\co  T_xM \longrightarrow T_{u(x)}N
\end{equation}
parametrized by $x\in M$ such that $F_x$ is a solution germ of $\mathcal R$-maps for each $x$.  More precisely, a tangential stable formal $\r$-map is a continuous section of the fiber subbundle of $E(M, N)\to M$ whose total space consists of triples $(x, y, f)$ where $f$ is a stable formal $\r$-map.  
 In some constructions we replace $T_{u(x)}N$ with its canonically isomorphic copy  $u^*T_{u(x)}N$.
A stable formal $\mathcal R$-map over  
$u$ is said to be \emph{proper} or \emph{smooth} if the underlying map $u$ is such.  A smooth map to $N$ is said to be \emph{transverse} to $F$ if it is transverse to the underlying map $u$.
\end{definition}

\begin{example}  For closed manifolds,  a stable formal immersion over $u$ is essentially an equivalence class of fiberwise linear monomorphisms  
$TM\oplus \varepsilon^k \to TN\oplus\varepsilon^k$ covering $u$; here $\varepsilon$ stands for a trivial line bundle 
over an appropriate space, and $k$ an integer. Note that every such map is homotopic to the direct sum of a map $TM\oplus \varepsilon\to TN\oplus \varepsilon$ and the identity map of the trivial bundle $\varepsilon^{k-1}$ over $M$.  
\end{example}

In the notation of Definition~\ref{d:3.2},  every s-germ $F_x$ admits a representation by a map germ from a neighborhood of $M\times \R^{k(x)}$ for some finite $k(x)$. Such a family of representing map germs with the same $k(x)=k$ for all $x$ is said to be a \emph{finite representation of $F$}. Every proper tangential stable formal $\r$-map to a compact manifold admits a finite representation.  

Tangential stable formal $\r$-maps do not behave well under pullbacks. In particular, there is no direct construction of a moduli space for tangential stable formal $\r$-maps. For this reason we also define the notion of a \emph{normal stable formal $\r$-map}.  Recall 
that in the definition of a graphic map $(u, \beta)$ we require that the image of $\beta$ belongs to $\R^t$ for some finite $t$. Given a  graphic map 
\[
(u, \beta)\co M\longrightarrow N\times \R^{t},
\]
let $\nu$ denote its normal bundle; if we identify $M$ with the image of $(u, \beta)$, then the vector bundle $\nu$ is the quotient of the vector bundle $TN\times T\R^t|M$ by the subbundle $TM$. We emphasize that we do not use Riemannian metrics here. Note also that $t=d+|\nu|$, where $|\nu|$  is the dimension of the vector bundle $\nu$. 

\begin{definition} A \emph{normal stable formal $\r$-map} $F^\nu$ covering a graphic map $(u, \beta)$ is a continuous family of s-germs 
\[
        F^\nu_x\co \R^{d+|\nu|}\longrightarrow \nu_x
\]
parametrized by $x\in M$, where $\nu_x$ stands for the fiber of $\nu$ over $x$.   
More precisely, a normal stable formal $\r$-map is a section of a fibre bundle constructed as in the definition of a tangential stable formal $\r$-map. 
\end{definition}

The notion of a normal stable formal $\r$-map $F^{\nu}=F^\nu(t)$ covering $(u, \beta)$  is well-defined, i.e., it does not depend on the choice of $t$. Indeed, suppose that $t$ is replaced with $t+1$ in the definition of $F^\nu$. Then $\nu$ is replaced with $\nu\oplus \varepsilon$, where $\varepsilon$ is the trivial linear bundle over $M$. Thus, families $F^\nu(t)$ can be canonically identified with families $F^{\nu}(t+1)$.

If a normal stable formal $\r$-map $F^{\nu}$ is represented by a family of map germs 
\[
       F_x^{\nu}\co \R^{d+|\nu|}\times \R^k\longrightarrow \nu_x\times \R^k
\] 
with the same $k$ for all $x$, then the representation is \emph{finite}. Every normal proper stable formal $\r$-map to a compact manifold admits a finite representation. Such a representation defines a (unique up to homotopy) finite representation of a tangential stable formal $\r$-map $F$:
\begin{equation}\label{eq:5a}
  T_xM\times \R^{d+|\nu|}\times \R^k \xrightarrow{\id_{T_xM}\times F_x^\nu} T_xM\times \nu_x \times \R^k\simeq T_{u(x)}N\times \R^{d+|\nu|}\times \R^{k}.  
\end{equation}

\begin{remark} The isomorphism in the composition (\ref{eq:5a}), say $\varphi$, is not canonical, and therefore the tangential stable formal $\r$-map $F$ is defined only up to homotopy. The isomorphism $\varphi$ can be chosen in a canonical way if the manifolds $M$ and $N$ are Riemannian. However, the so-defined correspondence $F^{\nu}\to F$ does not behave well (on the nose, not just up to homotopy) with respect to taking pullbacks.    
\end{remark}

Conversely, under a mild assumption, every tangential finite representation defines a (unique up to homotopy) normal finite representation. Indeed, recall that in the definition of a graphic map 
\[
(u, \beta)\co M\longrightarrow N\times \R^{\infty}
\]
we require that $(u, \beta)$ is an embedding. We say that a graphic map is \emph{generic} if  the map $\beta$ is actually an embedding itself.  We note that in contrast to graphic maps, generic graphic maps do not lead to a sheaf. 

Let $F$ be a tangential stable formal $\r$-map covering a graphic map $(u, \beta)$ of a compact manifold $N$. Suppose that $(u, \beta)$ is generic; in particular, the image of $\beta$ is a submanifold of $\R^t\subset \R^{\infty}$ for some finite $t$. Let $TM^\perp$ denote the normal bundle of $\beta(M)$ in $\R^t$.  
There is an isomorphism 
\[
TM^\perp\oplus u^*TN \simeq \nu
\]
of vector bundles over $M$, where $\nu$ is defined as above to be the normal bundle of the image of $M$ in $N\times \R^t$. Note that $t=d+|\nu|$.    
Suppose that $F$ is represented by a family of $\r$-map germs
\[
     F_x\co T_xM\times \R^k \longrightarrow T_{u(x)}N\times \R^k,
\] 
where $k$ is a non-negative integer, the same for all $x$. Then $F$ defines a (unique up to homotopy) normal finite representation by map germs
\[
     F_x^{\nu}\co \R^t\times\R^k\simeq T_xM^\perp\oplus T_xM\times \R^k
     \xrightarrow{\id\oplus F_x} T_xM^{\perp}\oplus u^*T_{u(x)}N\times \R^k\simeq \nu_x\times \R^k. 
\]
We observe that the first isomorphism is defined canonically up to homotopy. 

\begin{remark}
In most of the arguments below tangential stable formal $\r$-maps  are most helpful. However, there is  no direct definition of pullbacks of tangential stable formal $\r$-maps.   Since pullbacks are essential in the definition of moduli spaces, we use both tangential and normal stable formal $\r$-maps.  
\end{remark}

Finally, we are in position to define pullbacks (and moduli spaces) for (normal) stable formal $\r$-maps. 

Let $(u,\beta)\co M\to N\times \R^{\infty}$ be a graphic map with image of $\beta$ in $\R^t$, and  $f$ a smooth map $N'\to N$ transverse to $u$. Then the pullback map $f^*u$ has a natural structure of a smooth graphic map:
\[
     (f\times \id)^*(u, \beta)\co  M'\longrightarrow N'\times \R^{t},
\]
where $\id$ is the identity map of $\R^{t}$. As above, let $\nu$ denote the normal bundle over $M$ of the embedding $(u, \beta)$. Similarly, let $\nu'$ be the normal bundle over $M'$ of the embedding $(f \times \id)^*(u, \beta)$. Denote by $\tilde f$ the map from the diagram 
\[
\xymatrix{
N'\times \R^t\ \ar[d]  &\ M' \  \ar[d]_{f^*u}  \ar@{_{(}->}[l]  \ar[r]^{\tilde f} &\ M\ \ar[d]_{u} \ar@{^{(}->}[r]  &\ N\times \R^t  \ar[d]   \\
N' \ \ar@{=}[r] &\ N'  \ \ar[r]^{f}   &\ N \ \ar@{=}[r]  &\ N,  
}
\]

where the central square is a pullback square. Then $\tilde f$ 
is covered by a canonical fiberwise isomorphism $\nu'\to \nu$.  Let $F$ be a normal stable formal $\r$-map covering $u$. 

\begin{definition} The \emph{pullback} $f^*F^\nu$ of a normal stable formal $\r$-map $F^{\nu}$ is the normal  stable formal $\r$-map with the normal finite representation by map germs
\[
    (f^*F)^{\nu'}_z\co   \R^t\times \R^k\stackrel{F_x^\nu}\longrightarrow \nu_x\times \R^k \simeq \nu_z'\times \R^k
\]
where $z$ ranges over points of $M'$, and where the canonical isomorphism identifies the normal space of $M'$ at $z$ with the normal space of $M$ at $x=\tilde f(z)$. 
\end{definition}

All the definitions and constructions that we used in the case of $\mathcal R$-maps, can be adopted to the case of normal stable formal $\mathcal R$-maps in an obvious way, e.g., the definition of the classifying $\Gamma$-space of stable formal $\mathcal R$-maps $h\mathcal{M}_{\mathcal R}$ is obtained from the definition of $\mathcal{M}_{\mathcal R}$ by replacing proper $\mathcal R$-maps by proper  normal stable formal $\mathcal R$-maps. More precisely, let $hF\co \mathbf{Sset}^{op}\to \mathbf{Set}$ be the functor that associates to each simplicial set $C_{\bullet}$
the set of graphic normal stable formal $\r$-maps to $C_\bullet$. As above, there is a moduli space $hX_\bullet$
for $hF$ and its geometric realization is the space $h\mathcal{M}_\r$. 
Thus, every $n$-simplex in $h\M_\r$ comes with 
\begin{itemize}
\item a submanifold $V\subset \Delta^n_e\times \R^{\infty}$
whose inclusion is a graphic map $(u, \beta)$, and 
\item a normal stable formal $\mathcal R$-map $ F$ over $(u, \beta)$.
\end{itemize}

Finally, the space $h\M_\r$ can be defined to be the geometric realization of the partial sheaf $h\mathcal{F}_\r$ of
stable formal $\r$-maps; the partial sheaf $h\mathcal{F}_\r$ associates with a manifold $N$ the set of proper normal stable
formal graphic $\r$-maps to $N$.

\section{The canonical 
$\Gamma$-space structures on $\M_\mathcal{R}$ and $h\M_\r$}\label{ss:2.2} 

In this section we define a coherent operation 
on the space $\mathcal{M}_{\mathcal R}$ and a similar operation on $h\M_\r$. The coherence of the operation is formulated in terms of the Segal category $\Gamma$, which we recall here, for more details see \cite{Se} and \cite{BF}. 

The \emph{category $\Gamma$} consists of one object for each finite set $\mathbf{n}=\{1, ..., n\}$, with $n=0, 1, ...$, and one morphism $\mathbf{n}\to \mathbf{m}$ for each map $\theta$ of the set $\mathbf{n}$ to the set of subsets of $\mathbf{m}$ such that $\theta(i)$ is disjoint from $\theta(j)$ for each pair of distinct elements $i,j\in \mathbf{n}$. The composition corresponding to two maps $\theta_1$ and $\theta_2$ is the morphism corresponding to the map $i\mapsto \theta_2(\theta_1(i))$. Note that the category $\Gamma^{op}$ is isomorphic to the category of finite pointed sets $n^+=\{0,...,n\}$ and pointed maps; the base point in $n^+$ is $0$. 
A \emph{$\Gamma$-space} is a functor $A$ from  $\Gamma^{op}$ to the category of topological spaces such that
for any $n\ge 0$ the map 
\[
(i_1^*, \cdots, i_n^*)\co A(\mathbf{n})\to A(\mathbf{1}) \times \cdots \times  A(\mathbf{1})
\] 
is a homotopy equivalence, where $i_k\co \mathbf{1}\to \mathbf{n}$ is the inclusion $1\mapsto k$. 
The $0$-th space $A(\mathbf{0})$ of a $\Gamma$-space is required to be contractible. 
We note that the term $A(\mathbf{1})$ of a $\Gamma$-space $A$ is an $H$-space with operation
\[
    A(\mathbf{1})\times A(\mathbf{1}) \stackrel{\simeq}\longrightarrow A(\mathbf{2}) \stackrel{\mu^*}\longrightarrow A(\mathbf{1}),
\]
where $\mu\co \mathbf{1}\to \mathbf{2}$ is the morphism in $\Gamma$ defined by $\mu(1)=\{1, 2\}$. The 
identity element in $A(\mathbf{1})$ is an arbitrary point in the image of the unique morphism $A(\mathbf{0})\to A(\mathbf{1})$.
Slightly abusing notation, we will also say that $A(\mathbf{1})$ has a structure of a $\Gamma$-space. Similarly, we will use the notation $A$ both for the contravariant functor and for the space $A(\mathbf{1})$. Under these conventions, a $\Gamma$-space
is an H-space with a coherent operation. 


\begin{theorem}\label{th:2.9} Given a stable differential relation $\mathcal R$, the moduli spaces $\M_{\mathcal R}$ 
and $h\M_\r$ 
admit canonical $\Gamma$-space structures. 
\end{theorem}
\begin{proof}
For a non-negative integer $m$, let $X_n(\mathbf{m})\subset (X_n)^m$ be the set of ordered 
$m$-tuples of elements $V_i\in X_n$ such that 
$\sqcup V_i$ itself is an element 
of $X_n$. The space $\mathcal{M}_{\mathcal R}(\mathbf{m})$ is 
defined to be the geometric realization of the simplicial set 
$X_{\bullet}(\mathbf{m})\co [k]\mapsto X_k(\mathbf{m})$; 
the boundary and degeneracy operators of $X_{\bullet}(\mathbf{m})$ are 
respectively defined by means of actions of $\delta_i$ and $\sigma_j$ given in Lemma~\ref{l:2.2}. 

Given a morphism $\theta\co \mathbf{n}\to \mathbf{m}$ in the category $\Gamma$, there is a map of simplicial sets $\theta_X\co X_{\bullet}(\mathbf{m})\to X_{\bullet}(\mathbf{n})$ defined by the correspondence
 \[
 (V_1,\dots, V_m) \mapsto (V_{\theta(1)},\dots, V_{\theta(n)}),
 \]
 where each symbol $V_{\theta(i)}$ stands for the disjoint union 
 of objects $V_j$ indexed by $j\in \theta(i)$. The geometric realization $|\theta_X|$ of $\theta_X$ is a continuous map of topological spaces $\M_\r(\mathbf{m})\to\M_\r(\mathbf{n})$. It also immediately follows that compositions of morphisms $\theta$ in $\Gamma$ correspond to compositions of continuous maps $|\theta_X|$. The two axioms of $\Gamma$-spaces
are easily verified for $\M_{\mathcal R}$, see Theorem~\ref{th:5.6} 

The proof of Theorem~\ref{th:2.9} for $h\M_\r$ is similar. 
\end{proof}

\section{The cohomology theories $k^*_{\mathcal{R}}$ and $h^*_{\r}$}\label{ss:2.3}

In view of the Segal construction~\cite{Se}, the classifying $\Gamma$-space of $\mathcal{R}$-maps gives rise to a spectrum 
\[
   \mathbf{M}_{\mathcal R}\colon\quad \mathcal{M}_\mathcal{R}, B\mathcal{M}_{\mathcal{R}}, \ B^2\mathcal{M}_{\mathcal{R}}, \ B^3\mathcal{M}_{\mathcal{R}}, \dots,
\]
see \S\ref{asec:6} for a review.
Thus, every open stable differential relation $\mathcal R$ imposed on maps of dimension $d$ gives rise to a spectrum $\{B^i\mathcal{M}_{\mathcal R}\}$ of spaces indexed by $i\ge 0$, where we write $B^0\mathcal{M}_{\mathcal R}$ and $B^1\mathcal{M}_{\mathcal R}$ for $\mathcal{M}_{\mathcal R}$ and $B\mathcal{M}_{\mathcal R}$ respectively. The infinite loop space of the spectrum $\mathbf{M}_{\mathcal R}$ is denoted by $\Omega^{\infty}\mathbf{M}_{\mathcal R}$.

\begin{definition} The \emph{cohomology theory $k^*_{\mathcal R}$ of solutions of $\mathcal{R}$} is defined by
\[    
      k^n_{\mathcal R}(X)=[X, \Omega^{\infty-n}\mathbf{M}_{\mathcal R}],
\]
where $X$ is a topological space and $n$ is an integer. 
\end{definition}

Similarly, the spectrum of the $\Gamma$-space
$h\M_\r$ is denoted by $h\mathbf{M}_\r$, and the infinite loop space of $h\mathbf{M}_\r$  by $\Omega^{\infty}h\mathbf{M}_\r$.

\begin{definition} The \emph{cohomology theory $h^*_{\mathcal R}$ of stable formal solutions} is defined by 
\[    
      h^n_{\mathcal R}(X)=[X, \Omega^{\infty-n}h\mathbf{M}_{\mathcal R}],
\]
where  $X$ is a topological space and $n$ is an integer. 
\end{definition}

\begin{remark}\label{r:2.6} The spectrum of a $\Gamma$-space is connective \cite[Proposition 1.4]{Se}. 
Hence the $k$-cohomology groups and $h$-cohomology groups of a point are trivial in positive degrees. 
\end{remark}

In the rest of the section we construct a natural transformation $k^*_\r\to h^*_\r$ of cohomology theories. Let $M$ and $N$ be Riemannian manifolds, and 
\[
(u, \beta)\co M\to N\times \R^{\infty}
\]
a graphic $\r$-map such that the image of $\beta$ is in a finite dimensional space $\R^t$. Then  there is a  tangential stable formal $\r$-map defined by the family of map germs
 \begin{equation}\label{eq:5}
     F_x\co (T_xM, 0)\stackrel{\approx}\longrightarrow (M, x) \stackrel{u}\longrightarrow (N, y) \stackrel{\approx}\longrightarrow (T_{y}N, 0), 
 \end{equation}
where $y=u(x)$, and the two outer maps  identify open neighborhoods in manifolds with open neighborhoods in tangent spaces by means of exponential maps. The tangential finite representation $\{F_x\}$ gives rise to a normal finite representation
\[
    F_x^{\nu}\co  \R^{t} \longrightarrow \nu_x,
\]
where $\nu$ is the normal bundle of the embedding $(u, \beta)$ of $M$ into $N\times \R^t$, and $\nu_x$ is the fiber of $\nu$ over $x$.   Thus we associated with every graphic $\r$-map a normal stable formal $\r$-map (unique up to homotopy). Similarly, a concordance of graphic $\r$-maps corresponds to a concordance of normal stable formal $\r$-maps, and therefore for every compact manifold $N$ there is a well-defined natural map 
\[
     [N, \M_\r] \longrightarrow [N, h\M_\r]
\]
that associates to a concordance class of $\r$-maps to $N$ a concordance class of normal stable formal $\r$-maps to $N$.  We will see that this natural correspondence defines a natural transformation  
\[
\mathfrak{A}=\mathfrak{A}_{\mathcal{R}}\co k^*_{\mathcal R}\longrightarrow h^*_{\mathcal R}
\] 
of cohomology theories. 
In fact we will construct a model for the classifying space $B\M_\r$ so that it is a subspace of a model for $Bh\M_\r$. The corresponding inclusion of the (infinite) loop space $\Omega B\M_\r$ into the (infinite) loop space $\Omega Bh\M_\r$ is the very map classifying the natural transformation $\mathfrak{A}$.   

\begin{definition}\label{d:5} We say that an open stable differential relation $\mathcal R$ satisfies the \emph{b-principle} if the natural transformation $\mathfrak{A}_{\mathcal R}$ is an equivalence.
\end{definition}


\begin{remark}\label{r:4.3} The natural transformation $\mathfrak{A}_\r$ for the submersion differential relation imposed on oriented maps of dimension $d\ge 1$ is not an equivalence. In this case the space $\M_\r$ is the classifying space $\sqcup \BDiff M$ of Example~\ref{ex:1}. It is a topological monoid whose group completion is an infinite loop space classifying the cohomology theory $k^*_\r$. As a topological space, the infinite loop space classifying the cohomology theory $h^*_\r$ is homotopy equivalent to the infinite loop space of the Madsen-Tillmann spectrum $\Omega^{\infty}{\mathop\mathrm{MTSO(d)}}$; see Examples~\ref{ex:1}, \ref{e:3}. The Pontrjagin-Thom construction yields a map 
\[
   t\co \sqcup \BDiff M\longrightarrow \Omega^{\infty}{\mathop\mathrm{MTSO(d)}},
\] 
which in its turn, by the universal property of group completions, gives rise to the map 
\[
   \Omega B(\sqcup \BDiff M)\longrightarrow \Omega^{\infty}{\mathop\mathrm{MTSO(d)}}
\]
classifying the natural transformation $\mathfrak{A}_\r$. The zero homotopy group of the space on the left hand side is the group completion of the monoid $\pi_0(\sqcup \BDiff M)$, while the group of path components of the space on the right hand side is computed by Ebert in \cite{Eb}; these two groups are not isomorphic. Also recently Ebert computed the kernel of the induced homomorphism $t^*$ in rational cohomology groups~\cite{Eb1}. It is non-trivial for $d$ odd (but trivial for $d$ even).  

In the case $d=2$ the natural transformation $\mathfrak{A}_\r$ is an equivalence for an appropriately modified cohomology theory $k^*_\r$. This fact is known as the generalized Mumford conjecture; it was proved in the positive by Madsen and Weiss~\cite{MW}.  
\end{remark}

\begin{remark} Another open stable differential relation $\mathcal R$ for which $\mathfrak{A}_\r$ possibly  fails to be an equivalence is given by the minimal differential relation whose solutions include functions with Morse singularities of index $0$. In this case $k^*_\r$ is related to the diffeomorphism group of the sphere of dimension $d$; see \cite{Sae1}, \cite{Sa3}. 
\end{remark}

\section{$C$-valued partial $\Gamma$-sheaves}\label{sec:6}

Every open stable differential relation $\mathcal R$ imposed on maps of dimension $d$ gives rise to a spectrum $\mathbf{M}_\r=\{B^i\mathcal{M}_{\mathcal R}\}$ of spaces indexed by $i\ge 0$ by means of the Segal construction (\S\ref{asec:6}). In this section we define $C$-valued partial $\Gamma$-sheaves that we will need in section~\ref{s:GMTW} where we define (see Definition~\ref{d:2.12}) a nice model $\mathcal{B}\M_\r(\bullet)$ for the classifying $\Gamma$-space $B\M_\r(\bullet)$ of  $\M_\r(\bullet)$, and prove the equivalence of the two $\Gamma$-spaces.  Models for iterative classifying 
spaces $B^n\M_\r(\bullet)$ for $n>1$ can be 
constructed similarly; we will not need them however and omit their description. 

\begin{remark}
Determination of the classifying space of a monoid is a very old problem, with many approaches and views on the matter. In the current paper I choose to follow as close as possible the argument of Galatius-Madsen-Tillmann-Weiss in \cite{GMTW} which is based on the cocycle sheaf construction from the paper \cite{MW} of Madsen and Weiss.  There are two reasons for my choice. First, the constructions in the original ``The-four-author-paper" are widely known and therefore for many the presentation in the current section will be relatively easy to follow. And second, the language proposed in \cite{GMTW} is very convenient in the setting under consideration. 
We have to make a few modifications in the construction  though. First, we consider partial sheaves instead of sheaves; the necessity of partial sheaves should be clear from Example~\ref{ex:a1}. Second, we consider category valued $\Gamma$-sheaves, i.e., families of category valued sheaves with coherent operations. In particular, we consider $3$-simplicial sets $\Delta^{op}\times \Delta^{op}\times \Delta^{op}\to \mathbf{Set}$. This is necessary to make sure that all infinite loop structures under consideration are compatible. 
\end{remark}

Let $\mathcal{F}$ and $\mathcal{G}$ be two contravariant functors on $\Gamma$ with values in a category $\mathcal{C}$. Given an equivalence relation on objects of $\mathcal{C}$, we say that a natural transformation $\psi\co \mathcal{F}\to \mathcal{G}$ is a \emph{weak equivalence} if $\psi(\mathbf{m})$ is an equivalence for all $\mathbf{m}$. In particular, a transformation $\psi$ of $\mathbf{Top}$-valued functors is a weak homotopy equivalence if each $\psi(\mathbf{m})$ is a  homotopy equivalence of topological spaces. Similarly, if the values of $\mathcal{F}$ and $\mathcal{G}$ are topological categories, then $\psi$ is a weak equivalence if the morphism $|\psi(\mathbf{m})|$ is a homotopy equivalence of topological spaces for each $\mathbf{m}$. Two contravariant functors $\mathcal{F}$ and $\mathcal{G}$ on $\Gamma$ are \emph{weakly equivalent} if there exists a sequence of weak equivalences of contravariant functors
\[
   \mathcal{F}\longrightarrow \mathcal{H}_0 \longleftarrow \mathcal{H}_1 \longrightarrow\cdots \longrightarrow \mathcal{H}_{n-1}\longleftarrow \mathcal{H}_n\longrightarrow \mathcal{G}.
\]   
The category $\mathbf{PSet}$ of partial sets is defined to be a small category whose objects are sets and whose morphisms $S\to T$ are partially defined maps, i.e., a morphism in $\mathbf{PSet}$ is a map from a subset of $S$ to $T$.  The category of smooth manifolds and smooth maps is denoted by $\mathcal{X}^{op}$. A \emph{partial sheaf} is a functor $\mathcal{F}\co \mathcal{X}^{op}\to \mathbf{PSet}$ that satisfies the Sheaf Property: for any locally finite covering $\{U_i\}$
of a manifold $M$ and any sections $s_i\in \mathcal{F}(U_i)$ that agree on common domains---that is, $s_i=s_j$ on $U_i\cap U_j$---there exists a section $s\in \mathcal{F}(M)$ with $s=s_i$ on $U_i$ for all $i$. 
Many sheaf constructions extend to constructions for partial sheaves, for details see~\ref{a:1.5}.

Given a category $\mathcal{C}$,  a $\mathcal{C}$-valued partial \emph{$\Gamma$-sheaf} is a functor 
from $\Gamma^{op}$ to the category 
of $\mathcal{C}$-valued partial sheaves, see \S\ref{a:1.5}. Thus, a $\mathcal{C}$-valued partial $\Gamma$-sheaf consists of a family 
$\{\mathcal{F}(\mathbf{m})\}$ of $\mathcal{C}$-valued partial sheaves together with structure morphisms
$\mathcal{F}(\varphi)\co \mathcal{F}(\mathbf{m})\to \mathcal{F}(\mathbf{n})$, one for each morphism 
$\varphi\co \mathbf{n}\to \mathbf{m}$
in $\Gamma$. The geometric realization of a $\Gamma$-sheaf is a functor 
$|\mathcal{F}|$
on the category $\Gamma^{op}$ with 
\[
   |\mathcal{F}|(\mathbf{m})=|\mathcal{F}(\mathbf{m})| \qquad \mathrm{and}\qquad |\mathcal{F}|(\varphi)= |\mathcal{F}(\varphi)|.
\]

\section{The Galatius-Madsen-Tillmann-Weiss argument}\label{s:GMTW}

The already mentioned model for $B\M_\r(\bullet)$ is defined in terms of 
\emph{graphic maps of order $1$}, i.e., smooth embeddings 
\[
   (f, \alpha, \beta)\colon V\hookrightarrow N\times \R\times \R^{\infty-1}
\]
such that $(f, \alpha)$ is proper and the closure of $\beta(V)$ is compact. {\ccc For $x\in N$, the fiber $f^{-1}(x)$ is said to be \emph{broken} if the restriction $\alpha|_{f^{-1}(x)}\co f^{-1}(x)\to \R$ is not surjective, i.e., if $\alpha(f^{-1}(x))\ne \R$. The 
graphic map 
$(f, \alpha, \beta)$ of order $1$ is \emph{broken} if
$f^{-1}(x)$ is broken for all $x\in N$. }

\begin{definition}\label{d:2.12}
For a non-negative integer $\mathbf{m}$ and 
a smooth manifold $N$, 
let $D_\r(\mathbf{m})(N)$ denote the 
set of $m$-tuples of disjoint submanifolds $V_1,..., V_m$ in $N\times \R\times \R^{\infty-1}$
such that the inclusion of $\sqcup V_i$ is a broken graphic map $(f, \alpha, \beta)$ of order $1$  and 
the projection of $\sqcup V_i$ to $N$ is an $\r$-map. Then $D_\r(\mathbf{m})$
is a set valued partial sheaf for each $\mathbf{m}$; given a map $g\co N'\to N$, the map $D_\r(\mathbf{m})(g)$
is defined by 
\[
(f, \alpha, \beta)\mapsto (g\times \id_\R\times \id_{\R^{\infty-1}})^*(f, \alpha, \beta)
\] 
on the subset of $D_\r(\mathbf{m})(N)$ of graphic
maps $(f, \alpha, \beta)$ with $f$ transverse to $g$. Furthermore, 
$D_\r(\bullet)$ is a set valued partial $\Gamma$-sheaf; the 
$\Gamma$-structure morphisms $D_\r(\mathbf{m}\to \mathbf{n})$ are defined 
as in the proof of Theorem~\ref{th:2.9}.
The geometric realization 
$\Gamma^{op}\to \mathbf{Top}$ of the partial $\Gamma$-sheaf $D_\r(\bullet)$ is denoted by $\mathcal{B}\M_\r(\bullet)$, explicitly $\mathcal{B}\M_\r(\mathbf{k})=|D_\r(\mathbf{k})|$.
\end{definition}

An element in $D_\r(\mathbf{1})(N)$ can be depicted as in Figure~\ref{fig:1a}; here and below the $\R^{\infty-1}$ component in the figures is absent, and in captions we omit $\r$ and $\mathbf{m}=\mathbf{1}$. 

\begin{figure}[ht]
\centering
\begin{minipage}{.5\textwidth}
\centering
\includegraphics[height=2in]{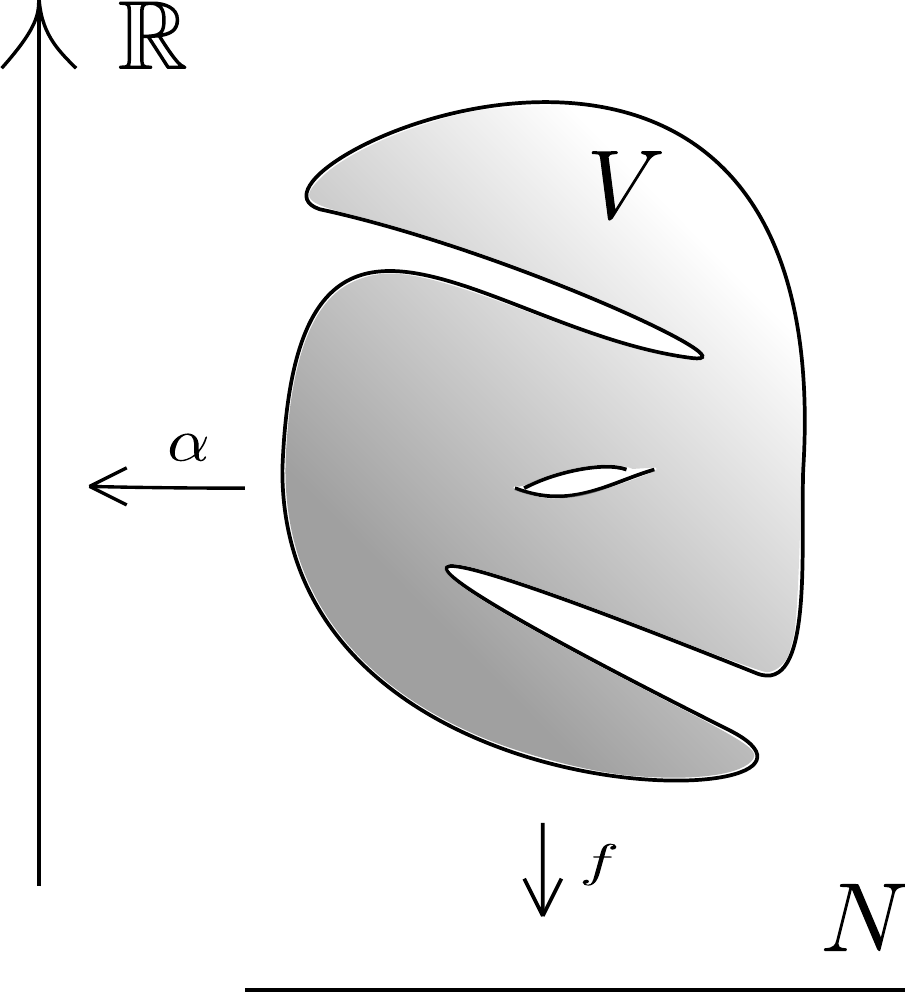}
\caption{An element in the set $D(N)$.}
\label{fig:1a}
\end{minipage}%
\begin{minipage}{.5\textwidth}
  \centering
\includegraphics[height=2in]{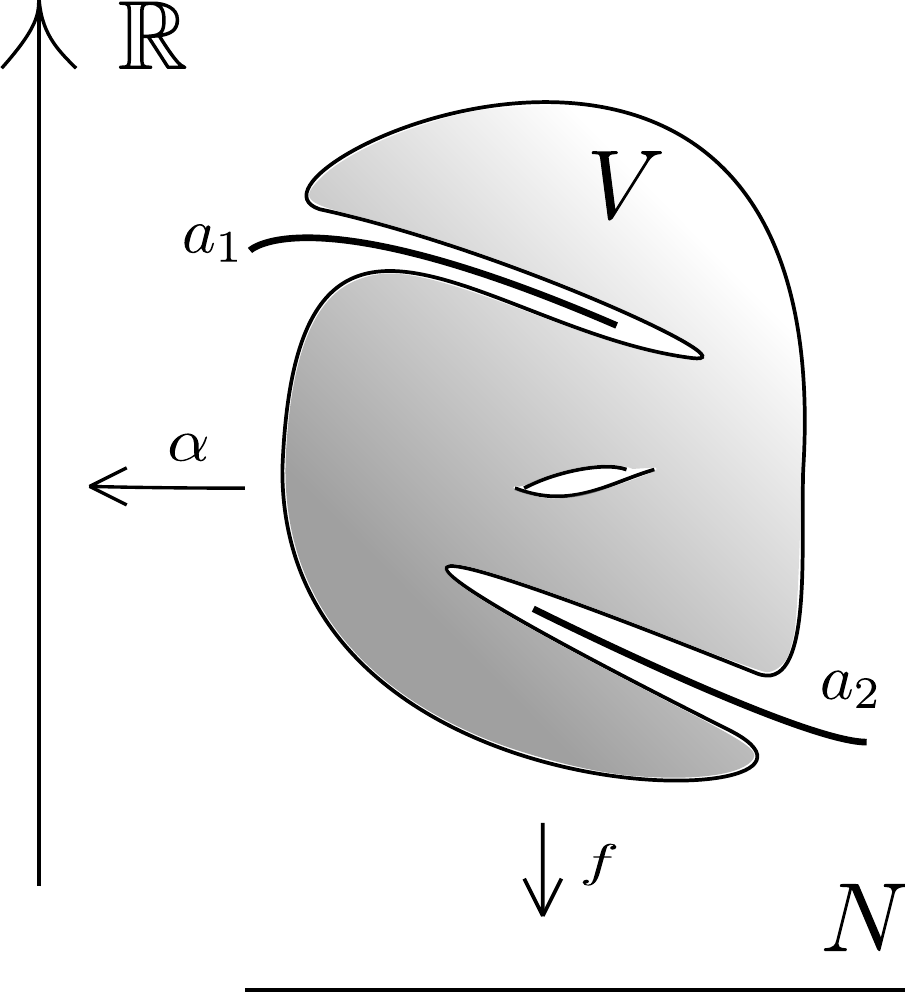}
\caption{An element in the set $\beta D^{\pitchfork}(N)$.}
\label{fig:2a}
\end{minipage}
\end{figure}

\begin{theorem}\label{th:6.2} The functor $\mathcal{B}\M_\r(\bullet)$  is a $\Gamma$-space. 
In fact, it is weakly homotopy equivalent to the classifying 
$\Gamma$-space $B\M_\r(\bullet)$. 
\end{theorem}

The proof of Theorem~\ref{th:6.2}  is a version for $\mathcal{C}$-valued partial $\Gamma$-sheaves of  
a technical argument of Galatius, Madsen, Tillmann and Weiss in \cite[Section 4]{GMTW}. The reader may skip it on the first reading as the argument will not be needed elsewhere in the paper.

\begin{proof}[Proof of Theorem~\ref{th:6.2}]
We will introduce a poset valued partial 
$\Gamma$-sheaf $D^\pitchfork_\r(\bullet)$ and a category valued partial $\Gamma$-sheaf
$C_\r(\bullet)$
with weak homotopy equivalences
\[
   \mathcal{B}\M_\r(\bullet)=|D_\r(\bullet)| \longleftarrow |\beta D^\pitchfork_\r(\bullet)|  \quad \textrm{and}\quad  B|D^\pitchfork_\r(\bullet)|
   \longleftarrow B|C_\r(\bullet)| \longrightarrow B\M_\r(\bullet),  
\]
of $\mathbf{Top}$-valued functors on $\Gamma^{op}$, where $\beta D^\pitchfork_\r(\mathbf{m})$ is the cocycle partial sheaf of $D^{\pitchfork}_\r(\mathbf{m})$, see Definition~\ref{a:2}.  In view of the 
Madsen-Weiss construction~\cite{MW} of a natural equivalence of $|\beta D^\pitchfork_\r(\bullet)|$
and $B|D^\pitchfork_\r(\bullet)|$, this will imply Theorem~\ref{th:6.2}.

\begin{remark} For all partial sheaves $\mathcal{F}$ below, it is easily verified that $|\mathcal{F}|$ is 
connected and the homotopy classes of maps $S^n\to |\mathcal{F}|$ of spheres are in bijective 
correspondence with $\pi_n|\mathcal{F}|$ for all $n\ge 0$, {\ccc see the proof of Corollary~\ref{c:10.4}}. In particular, in all cases under consideration, 
in order to prove that a map $f\co \mathcal{F}\to \mathcal{G}$ of partial sheaves is a homotopy equivalence, 
it suffices to show that $f_*\co [S^n, |\mathcal{F}|]\to [S^n, |\mathcal{G}|]$ is an isomorphism. 
Consequently, it suffices to show that the map $f$ induces an isomorphism of the sets of 
concordance classes of elements in $\mathcal{F}(S^n)$ and $\mathcal{G}(S^n)$.
\end{remark}

\begin{definition} For a smooth manifold $N$ and a positive integer $\mathbf{m}$, let $D^{\pitchfork}_\r(\mathbf{m})(N)$ denote 
the set of pairs of a broken graphic map $(f,\alpha,\beta)\in D_\r(\mathbf{m})(N)$ of order $1$ and a function 
$a$ on $N$ such that $\alpha(f^{-1}(x))\ne a(x)$ for all $x\in N$. 
Then $D^\pitchfork_\r(\mathbf{m})$ is a sheaf of posets in which $(f, \alpha, \beta, a)\le (f', \alpha', \beta', a')$
if $(f,\alpha, \beta)=(f', \alpha', \beta')$, $a\le a'$ and the set $\{ a(x)=a'(x) \}$ is open in $N$, see Figure~\ref{fig:3a}.
Also $D^{\pitchfork}_\r(\bullet)$ is a poset valued partial $\Gamma$-sheaf.
\end{definition}

{\ccc
Recall that the cocycle sheaf $\beta F$ of a category valued sheaf $F$ is a set valued sheaf defined as follows. Let $J$ be an uncountable indexing set $J$. 
For a family of sets $\{U_j\}$ indexed by a finite subset $R\subset J$, let $U_R$ denote the intersection $\cap U_j$ over all $j\in R$. 

\begin{definition} \label{a:2}
A \emph{$\mathcal{C}$ bundle} over a manifold $X$ is a triple $(U_R, c_R, \varphi_{RS})$ of 
\begin{itemize}[nosep]
\item a locally finite open cover $U=\{U_j\}$ of $X$ by open subsets indexed by $J$, 
\item a continuous bundle of objects $c_R\co U_R\to \Ob\mathcal{C}$ for each finite subset $R\subset J$, 
\item a continuous transition function $\varphi_{RS}\co U_S\to \Mor\mathcal{C}$, where $\varphi_{RS}(x)$ is a morphism from $c_S(x)$ to $c_R(x)$ for each $x\in U_S$ and finite non-empty subsets $R\subseteq S\subset J$
\end{itemize} 

such that  
 each $\varphi_{RR}(x)$ is the identity morphism of $c_R(x)$, and 
 $\varphi_{RT}=\varphi_{RS}\circ \varphi_{ST}$ over $U_T$ for all $R\subseteq S\subseteq T$ of finite non-empty subsets of $J$. The cocycle sheaf $\beta F$ of a category valued sheaf $F$ associates with a manifold $X$ the set $\beta F$ of $C$ bundles over $X$. Similarly, the cocycle sheaf $\beta F(\bullet)$ of a category valued partial $\Gamma$-sheaf $F(\bullet)$ is a set valued partial $\Gamma$-sheaf.  The argument in the proof of \cite[Theorem A.12]{MW} shows that $|\beta F(\bullet)|\simeq B|F(\bullet)|$ for any category valued partial $\Gamma$-sheaf $F(\bullet)$.  
\end{definition}
}

In particular, we have $|\beta D^{\pitchfork}_\r(\bullet)|\simeq B|D^\pitchfork_\r(\bullet)$|.

{\ccc For a closed subset $A$ of a manifold $X$, let $s$ be a germ section over $A$ of a sheaf $F$, i.e., an element in the colimit of the sets $F(U)$ of sections over open neighborhoods $U$ of $A$.  The subset of sections in $F(X)$ that agree with $s$ near $A$ is denoted by $F(X, A; s)$. Two such sections are concordant relative to $A$ if there is a concordance  whose germ near $A$ is constant. The set of concordance classes relative to $A$ is denoted by $F[X, A; s]$. 

\begin{lemma}[Madsen-Weiss, \cite{MW}]\label{le:8.6} A map $\alpha\co F_1\to F_2$ of sheafs is a week equivalence if the induced map $F_1[X, A; s]\to F_2[X, A; \alpha(s)]$ is surjective for all pairs $X, A$. 
\end{lemma}
}

\begin{proposition}\label{p:2.14} The forgetful map $|\beta D^\pitchfork_\r(\bullet)|\to |D_\r(\bullet)|$ is a weak homotopy equivalence.
\end{proposition}
\begin{proof} The proof of Proposition~\ref{p:2.14}
is similar to those of \cite[Proposition 4.2.4]{MW} and \cite[Proposition 4.2]{GMTW}.
Given an arbitrary section $(f, \alpha, \beta)$ in $D_\r(\mathbf{m})(N)$, 
{\ccc for each point $x\in N$ the set $\alpha(f^{-1}(x))$ does not hit a value, say $a_x\in \R$, since $(f, \alpha, \beta)$ is broken. In fact,  $a_x\notin \alpha(f^{-1}(y))$ for all $y$ close to $x$ since $x$ is away from the set $f(\alpha^{-1}(a_x))$ which is closed in $N$.} Hence, there are a covering $\{U_i\}$ of $N$ by open sets $U_i\subset N$ 
and a family of functions $a_i\co U_i\to \R$ such that 
$\alpha(f^{-1}(x))\ne a_i(x)$ for all $x\in U_i$. We may also assume that
the set $\{\, x\, |\, a_i(x)=a_j(x)\,\}$ is open in $N$ for all $i, j$. 
For any subset $R$ of indices of the covering $\{U_i\}$, let $U_R$ denote the intersection of the sets $U_i$ with $i\in R$. Let 
$a_R$ denote the function on $U_R$ given by $\min\{\ a_i(x)\ |\ i\in R\ \}$. Finally, for indexing sets $R\subseteq S$, 
denote the triple $(f^{-1}(U_S), a_S, a_R|U_S)\in N_1D^{\pitchfork}(U_S)$ by $\varphi_{RS}$. Then 
$(U_R, a_R, \varphi_{RS})$ is a lift of $(f, \alpha, \beta)$ to $\beta D^{\pitchfork}_\r(\mathbf{m})(N)$. This
proves the surjectivity of maps $[N, |\beta D^\pitchfork_\r(\mathbf{m})|]\to [N, | D_\r(\mathbf{m})|]$. The 
injectivity is proved similarly, see  Lemma~\ref{le:8.6}. %
%
\end{proof}

\begin{figure}[ht]
\centering
\begin{minipage}{.5\textwidth}
\centering
\includegraphics[height=2in]{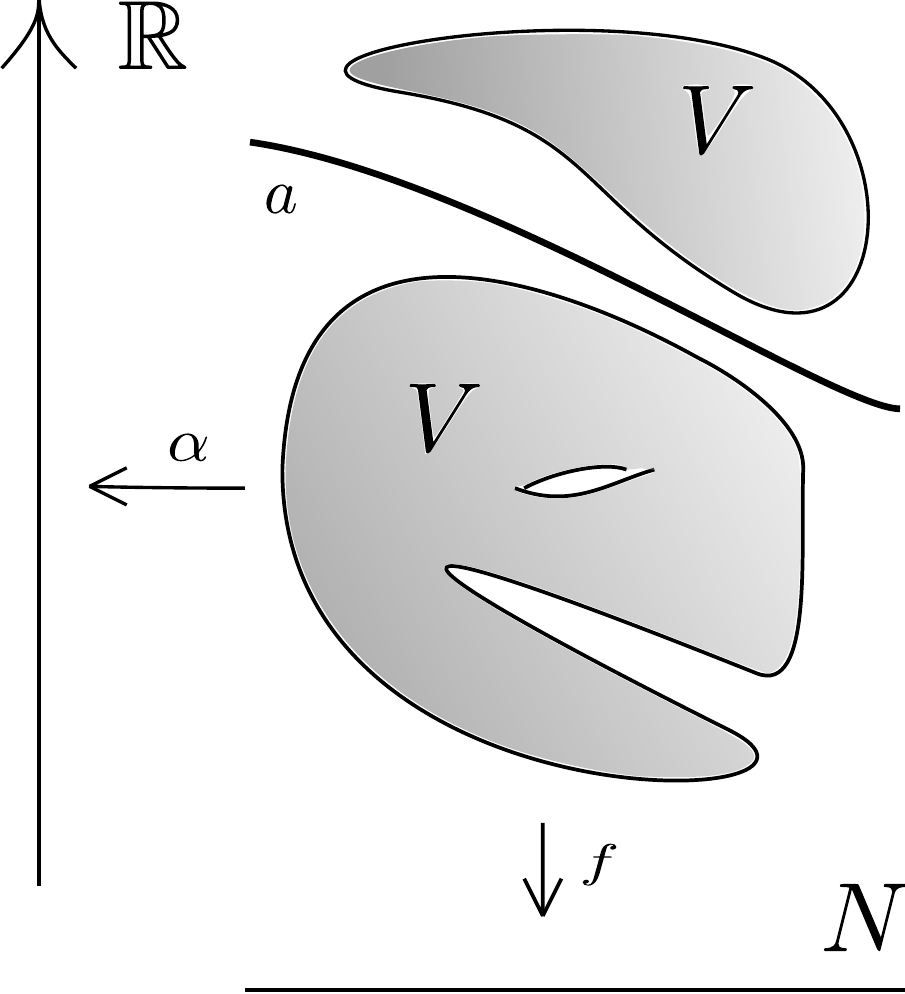}
\caption{An element in the poset $D^{\pitchfork}(N)$.}
\label{fig:3a}
\end{minipage}%
\begin{minipage}{.5\textwidth}
  \centering
\includegraphics[height=2in]{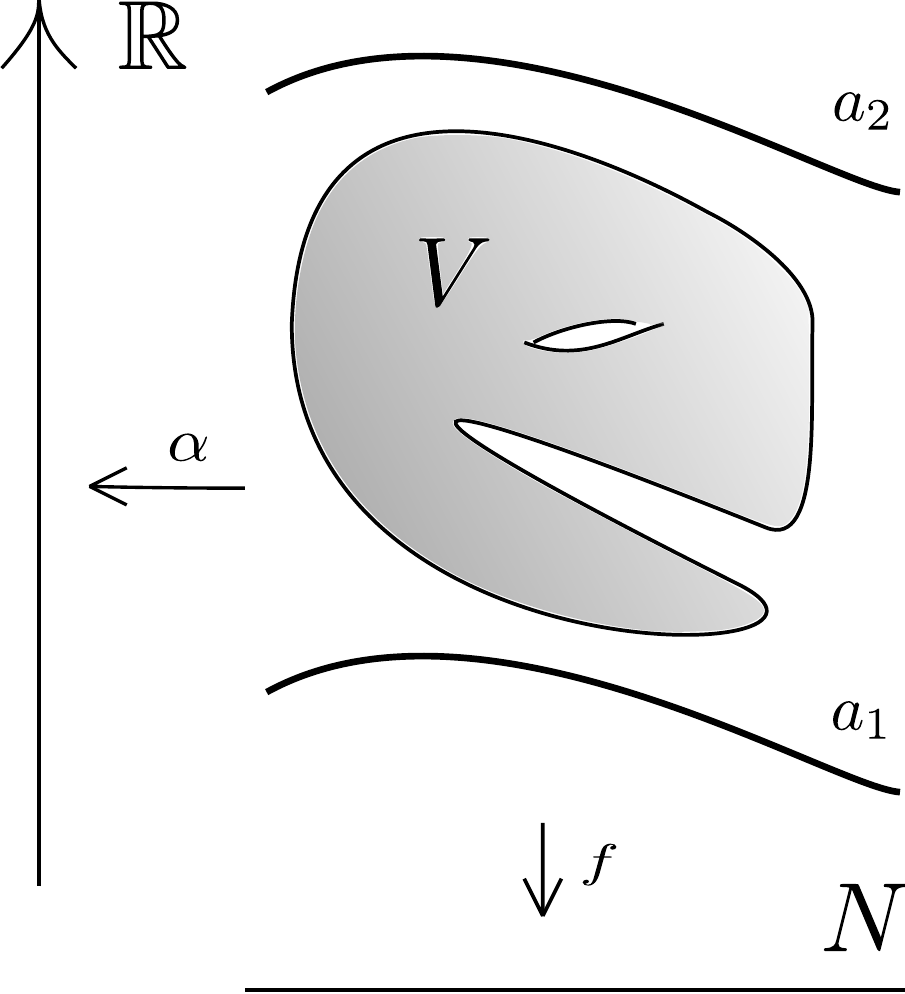}
\caption{A morphism in the category $C(N)$.}
\label{fig:4a}
\end{minipage}
\end{figure}

\begin{definition} For a manifold $N$ and a non-negative integer $\mathbf{m}$, 
let $C_\r(\mathbf{m})(N)$ denote the small category whose set 
of objects is the set of smooth functions on $N$, and whose set
of morphisms from $a_0$ to $a_1$ is the set of tuples  of $m$ smooth disjoint submanifolds of $N\times \R\times \R^{\infty-1}$ such that for the inclusion 
\[
     V_1\sqcup \cdots \sqcup V_m\xrightarrow{(f, \alpha, \beta)} N\times \R\times \R^{\infty-1}
\]
the map $f$ is a proper $\mathcal R$-map, the closure of the image of 
$\beta$ is compact, and $\alpha$ is a map with $\alpha(f^{-1}(x))$ in $(a_0(x)+\varepsilon(x), a_1(x)-\varepsilon(x))$ 
for each $x$ and some strictly positive function $\varepsilon(x)$. We require that the set $\{\, x\, |\, a_0(x)=a_1(x)\, \}$ is open. It follows that $V$ is empty if $a_0\equiv a_1$. Clearly $C_\r(\mathbf{m})$ is a category valued partial sheaf for each $\mathbf{m}$, and $C_\r(\bullet)$ is a category valued partial $\Gamma$-sheaf. 
\end{definition}

Recall that every poset is a category with morphisms given by inequalities $a\le b$. In particular, there is a map $\eta\co D^{\pitchfork}_\r(\bullet)\to C_\r(\bullet)$ of category valued partial $\Gamma$-sheaves. In terms of objects the map of categories 
\[
    D^{\pitchfork}_\r(\mathbf{m})(N)\to C_\r(\mathbf{m})(N)
\]
is the projection $(f, \alpha, \beta; a)\mapsto a$. For a map $f\co V\to N$, a morphism $(f, \alpha, \beta; a\le a')$ maps to its restriction to $V\cap (N\times [a_0, a_k]\times \R^{\infty-1})$, where $N\times [a_0, a_1]$ stands for the subspace of $N\times \R$ that consists of the union of all slices $\{x\}\times [a_0(x), a_1(x)]$ with $x\in N$.

\begin{proposition}\label{p:7.11} The map
$B\eta(\bullet)\co B|D^{\pitchfork}_\r(\bullet)|\to B|C_\r(\bullet)|$ is a weak homotopy equivalence of $\mathbf{Top}$-valued functors on $\Gamma^{op}$.
\end{proposition}
\begin{proof}  The proof of Proposition~\ref{p:7.11} is similar to that of \cite[Proposition 4.3]{GMTW}. For a category $X$, let $N_\bullet X$ denote the nerve of $X$.
Then an element in $N_kC_\r(\mathbf{m})(N)$ is represented by $k+1$ functions $a_0\le \cdots \le a_k$ and 
a manifold $V\subset N\times \R\times \R^{\infty-1}$ away of $\{x\}\times \{a_i(x)\}\times \R^{\infty-1}$ for all $x$ and $i$; the manifold $V$ is actually inside of the union of slices $\{x\}\times (a_0(x)\times a_k(x))\times \R^{\infty-1}$. The components $(a_0(x), a_k(x))$ can be stretched to $(-\infty, \infty)$ so that the same functions and the same $V$ represent an element in 
$N_kD^{\pitchfork}_\r(\mathbf{m})(N)$. This shows that $N_k\eta(\mathbf{m})$ is homotopy surjective.  The 
injectivity is proved similarly, see Lemma~\ref{le:8.6}. 
\end{proof}

Recall that by definition the space $\M_\r$ is the geometric realization of a partial set valued sheaf $\mathcal{F}_\r$. 
Similarly, for a manifold $N$, let $\mathcal{F}_\r(\mathbf{m})(N)$ denote the set of submanifolds $V\subset N\times \R^{\infty}$ such that the inclusion is a graphic $\r$-map, and $V$ is the disjoint union of $m$ manifolds $V_1,..., V_m$. Then the newly defined $\mathcal{F}_\r$ is a partial set valued $\Gamma$-sheaf. 
For each $\mathbf{m}$, the nerve $N_\bullet C_\r(\mathbf{m})$ and $\mathcal{F}_\r(\mathbf{m}\times \bullet)$
are partial sheaves with values in semi-simplicial sets. Furthermore, $N_\bullet C_\r(\bullet)$ and $\mathcal{F}_\r(\bullet\times \bullet)$
are partial $\Gamma$-sheaves with values in semi-simplicial sets, and there is a forgetting map 
$\gamma\co N_\bullet C_\r(\bullet)\to \mathcal{F}_\r(\bullet\times \bullet)$; under this forgetting map, an element $\{a_0\le \cdots \le a_n, V\}$ of $N_nC_\r(\mathbf{m})(N)$ maps to the element $\{V\cap [a_0, a_1], ..., V\cap [a_{n-1}, a_n]\}$ of $\mathcal{F}(\mathbf{m\times n})(N)$.

\begin{figure}[ht]
\centering
\begin{minipage}{.5\textwidth}
\centering
\includegraphics[height=2in]{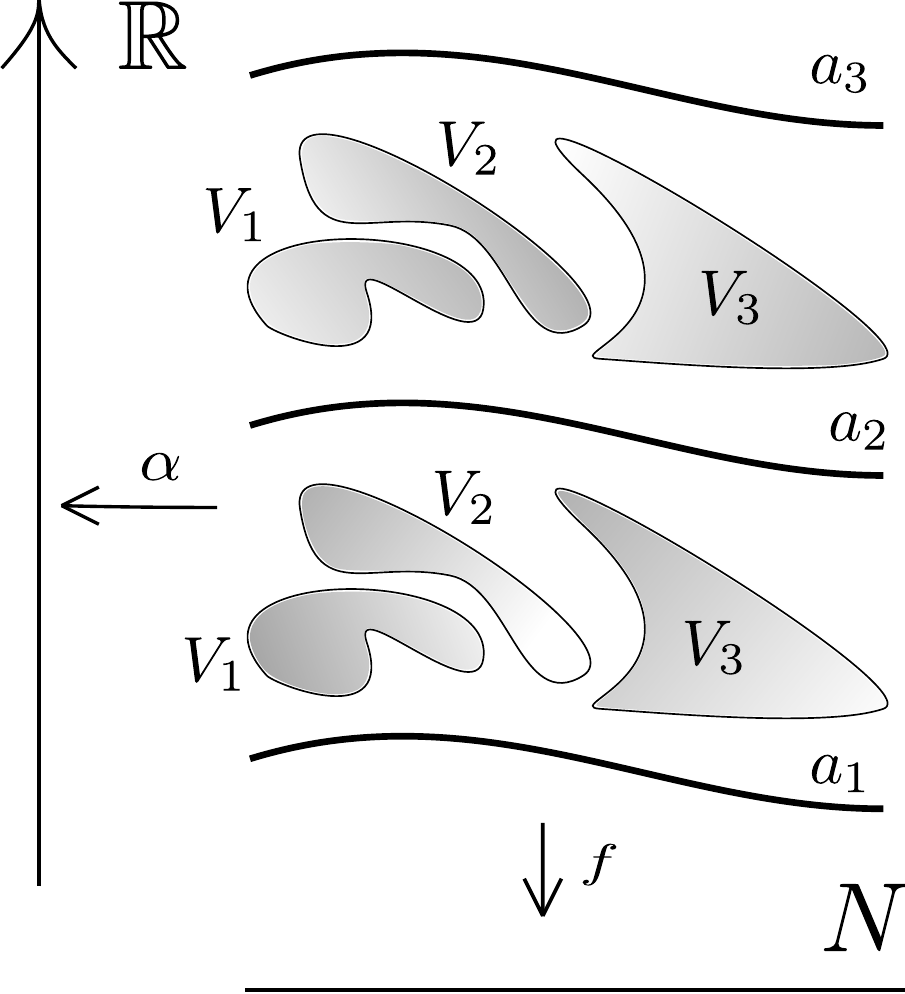}
\caption{An element in the set $N_n C(\mathbf{m})(N)$.}
\label{fig:5a}
\end{minipage}%
\begin{minipage}{.5\textwidth}
  \centering
\includegraphics[height=2in]{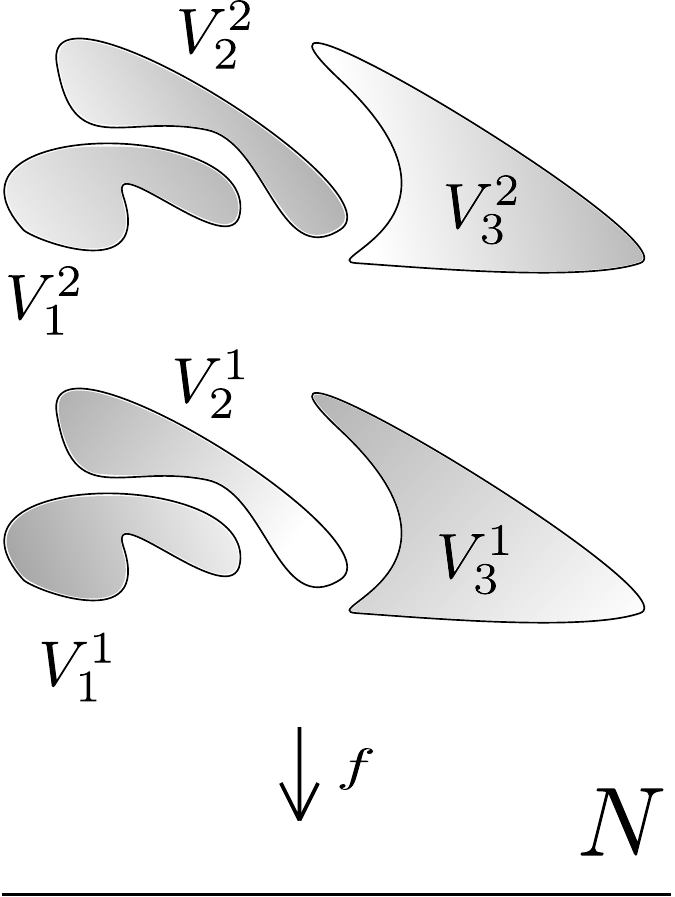}
\caption{An element in the set $\mathcal{F}(\mathbf{m}\times \mathbf{n})(N)$.}
\label{fig:6a}
\end{minipage}
\end{figure}

\begin{lemma} The map $B|\gamma|\co B|C_\r(\bullet)|\to B\M_\r(\bullet)$ is a weak homotopy equivalence.  
\end{lemma}
\begin{proof} Recall that $\M_\r(\mathbf{m})=|\mathcal{F}_\r(\mathbf{m}\times \bullet)|$. Thus it 
suffices to observe that for $n\ge 0$, the map $N_n\eta(\mathbf{m})$ is a homotopy equivalence of the spaces of 
$n$-simplices $N_n|C_\r(\mathbf{m})|=(N_1|C_\r(\mathbf{m})|)^n$ in $B|C_\bullet(\mathbf{m})|$ and $N_n\M_\r(\mathbf{m})=\M_\r(\mathbf{m}\times \mathbf{n})$
in $B\M_\r(\mathbf{m})$. 
\end{proof}
This completes the proof of Theorem~\ref{th:6.2}.
\end{proof}

\section{The classifying $\Gamma$-space $Bh\M_\r$} \label{ss:5.2}

The constructions in sections~\ref{sec:6}, \ref{s:GMTW} are also applicable in the case of moduli spaces of normal stable formal solutions. In particular, 
for a non-negative integer $\mathbf{m}$ and a smooth manifold $N$, let $hD_\r(\mathbf{m})(N)$ denote the 
set of pairs $(V, F)$ of 
\begin{itemize}
\item a submanifold $V=V_1\sqcup  ... \sqcup V_m$ in $N\times\R\times \R^{\infty-1}$ such that the inclusion 
of $V$ is a broken graphic map of order $1$, and
\item a normal stable formal $\r$-map $F$ coving the projection of $V$ to $N$. 
\end{itemize}

Then $hD_\r(\bullet)$ is a set valued partial 
$\Gamma$-sheaf.  The proof of Theorem~\ref{th:9.1a} is similar to that of  Theorem~\ref{th:6.2}.

{\ccc
\begin{theorem}\label{th:9.1a} The geometric realization of $hD_\r(\bullet)$ is a $\Gamma$-space weakly homotopy equivalent  to the classifying 
$\Gamma$-space $Bh\M_\r(\bullet)$. 
\end{theorem}
}


\begin{figure}[ht]
\centering
\begin{minipage}{.5\textwidth}
\centering
\includegraphics[height=2in]{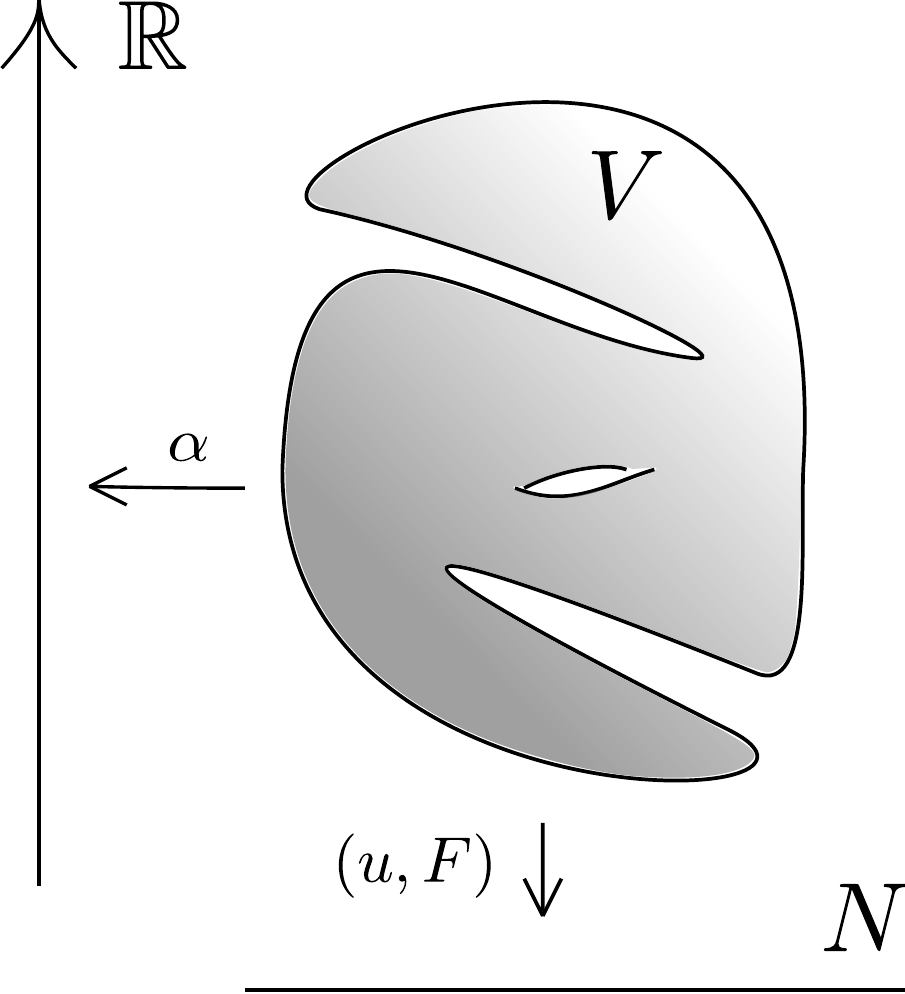}
\caption{An element in the set $hD(N)$.}
\label{fig:7a}
\end{minipage}%
\begin{minipage}{.5\textwidth}
  \centering
\includegraphics[height=2in]{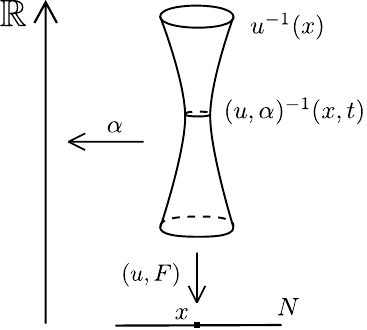}
\caption{A fiber of a map in the set $hE(N)$.}
\label{fig:8a}
\end{minipage}
\end{figure}

In this section we give another model $|hF_\r|$ for the $\Gamma$-space $Bh\M_\r(\bullet)$, see Definition~\ref{d:7.3}. 
In the case $d>0$ we need a preliminary discussion, and until Definition~\ref{d:7.3}, we will assume that $d>0$. In the case $d>0$ we will construct an intermediate space $|hE_\r|$ and show that there are inclusions of geometric realizations:
\begin{equation}\label{eq:11.1b}
           |hD_\r| \stackrel{\subset}\longrightarrow |hE_\r| \stackrel{\supset}\longleftarrow |hF_\r|. 
\end{equation}
Informally, all the spaces under consideration classify certain graphic normal stable formal solutions, i.e., certain pairs of a graphic map $(u, \alpha, \beta)\co V\to N\times \R\times \R^{n-1}$ and a normal stable formal solution $F$ covering $u$. The
space $|hD_\r|$ classifies those pairs for which $(u, \alpha, \beta)$ is 
broken. For the space $|hE_\r|$ this property is replaced with the property that each regular fiber of the proper map $(u, \alpha)$ is a closed manifold cobordant to the empty manifold. Finally, for the space $|hF_\r|$ we add an additional condition that $F$ is represented by $du$. In order to show that the inclusions in (\ref{eq:11.1b}) are homotopy equivalences, we will observe that each of the classifying spaces are path connected H-spaces. Therefore, in view of Corollary~\ref{c:10.4}, we will only need to show that 
for each closed manifold $X$ the inclusions induce isomorphisms of concordance classes     
\[     hD_\r[X] \simeq hE_\r[X] \simeq hF_\r[X].
\]
By definition, an element of each of these sets is a concordance class of certain graphic \emph{normal} stable formal solutions. Since $X$ is compact, these classes are in canonical bijective correspondence with graphic \emph{tangential} stable formal solutions. For this reason, we will often identify corresponding normal and tangential stable formal solutions. 
  
To carry out our program, we will need a preliminary lemma, Lemma~\ref{l:8.1}. It shows that near a generic fiber of $(u, \alpha)$ the formal structure $F$ can be integrated. We will show later that a generic fiber with an integral structure $F$ near it can be broken, see the proof of Theorem~\ref{th:7.1}.  

Let $\mathbf{V}$ denote a graphic $\r$-map $(V, u, \alpha, \beta)$ of order $1$ to a smooth manifold $N$ together 
with a tangential stable formal $\r$-map $F$ covering $u$. We say that a value $(x,t)$ in $N\times \R$ is \emph{regular} if it is a regular value of $(u, \alpha)$ and over the inverse image $M$ of $(x, t)$ each s-germ $F$ is regular, i.e., the differential of each map germ in the family is surjective. In this case $M$ is called a \emph{regular} fiber of $\mathbf{V}$. We note that for a generic $\mathbf{V}$, the set of regular values of $\mathbf{V}$ is dense and open in $N\times \R$. 

\begin{lemma}\label{l:8.1} Let $M$ be a regular fiber of $\mathbf{V}$. Then $F$ is homotopic through tangential stable formal $\r$-maps to a tangential stable formal $\r$-map that {\ccc coincides with $F$ outside of a given neighborhood of $M$ and} coincides with $du$ over a {\ccc smaller} neighborhood of $M$.
\end{lemma}
\begin{proof} Let $(x, t)$ be a regular value of $\mathbf{V}$ and $M$ the fiber over $(x,t)$. Then there exists a product neighborhood $U$ of $(x,t)$ in $N\times \R$ that consists of regular values. If $U$ is sufficiently small, then the inverse image of $U$ with respect to $(u, \alpha)$ can be identified with $M\times U$ in such a way that the restriction of the map $(u, \alpha)$ to $M\times U$ is given by the projection onto the second factor. Then over $M\times U$ the family  of tangential stable map germs $d_xu$ is represented by the family of constant map germs $T_xM\times \R\to \R^0$. 
On the other hand, 
since $M$ is compact, the family $F|M$ of stable map germs can be represented by a continuous family of regular map germs 
\[
F_x\co T_xM\times\R^{n+1}\to \R^{n}, \qquad \textrm{parametrized by } x\in M,
\] 
 for an appropriate finite $n$ which can be chosen to be the same for all $x$. We will construct a homotopy of the family of map germs $\{F_x\}_{x\in M}$ to the family of constant map germs $d_xu\times \id_{\r^n}$. To begin with, we observe that there exists a canonical linear homotopy of map germs $F_x$ to linear map germs $dF_x$. Thus we may assume that for each $x\in M$ the map germ $F_x$ is linear. 

Choose a Riemannian metric on $M$. Then for each $x\in M$, there is a unique $n$-tuple $\tau_F$ of vectors $v_1,..., v_n$ in $T_xM\times \R^{n+1}$ normal to the kernel of the linear surjective map $F_x$ such that $F_x$ takes $v_i$ to the $n$-th basis vector in $\R^n$ for each $i$.    
In fact, the family $F|M$ of linear map germs can be identified with an $n$-tuple of linearly independent sections $\tau_F$ of the vector bundle $TM\times \R^{n+1}$. Choose a homotopy of $\tau_F$ through linearly independent $n$-tuples of sections to the tuple of constant $n$ sections given by the last $n$ basis vectors of $\R^{n+1}$. The corresponding homotopy of $F|M$ takes it to the family of map germs 
\[
       d_xu\times \id_{\R^n}\co (T_xM\times \R)\times \R^{n}\longrightarrow \R^0\times \R^n,
\]
which by definition is stably equivalent to the family of constant map germs $d_xu$. Furthermore, the family $F$ of s-germs over $V$ is homotopic to a family of s-germs that near $M$ is represented by the constant map germs, {\ccc and the homotopy can be chosen to be trivial outside of a given neighborhood of $M$.}  This homotopy is the desired one.     
\end{proof}

Next we will describe a construction of concordances breaking fibers of graphic maps of order $1$. Recall that we still assume that $d>0$. Furthermore, let us assume that the open differential relation $\r$ is chosen so that every Morse function on a manifold of dimension $d+1$ is a solution of $\r$. 

By definition, given a map $f\co M\to N$, a point $x\in M$ is said to be a \emph{fold point}, if there are a neighborhood $U$ about $x$ diffeomorphic to $\R^{n-1}\times \R^{d+1}$ and a neighborhood of $f(x)$ in $N$ diffeomorphic to $\R^{n-1}\times \R$ such that $f|U$ is the product map $\id_{\R^{n-1}}\times g$, where $g$ is a Morse function. A point $x\in M$ is said to be \emph{regular} if the restriction of $f$ to some neighborhood of $x$ is a submersion. If each point of $M$ is either fold or regular, then $f$ is said to be a \emph{fold map}. Since $\r$ is a stable differential relation, the assumption that Morse functions satisfy $\r$ implies that each fold map of dimension $d$ is also a solution of $\r$. 

We will need the following well-known fact, for a proof see Proposition 4.2 in the review \cite{Sa4}. 

\begin{lemma}\label{l:9.2} Let $f\co M\to \R^{n+1}$ be a fold map and $\pi\co \R^{n+1}\to \R^{n}$ the projection along the first coordinate. Let $\Sigma\subset M$ denote the set of fold points of $f$. Suppose that the composition $\pi\circ f|\Sigma$ is a fold map of $\Sigma$. Then $\pi\circ f$ is a fold map of $M$, and the fold points of $\pi\circ f$ coincide with the fold points of $\pi\circ f|\Sigma$. 
\end{lemma}

\begin{figure}[ht]
\centering
\begin{minipage}{.5\textwidth}
\centering
\includegraphics[height=1.6in]{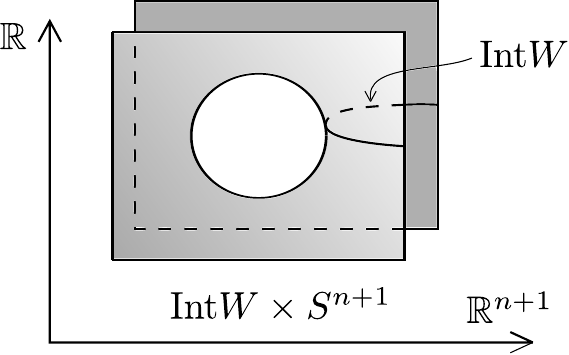}
\caption{A fold map $\mbox{Int} W\times S^{n+1}\to \R^{n+2}$.}
\label{fig:8}
\end{minipage}%
\begin{minipage}{.5\textwidth}
  \centering
\includegraphics[height=1.6in]{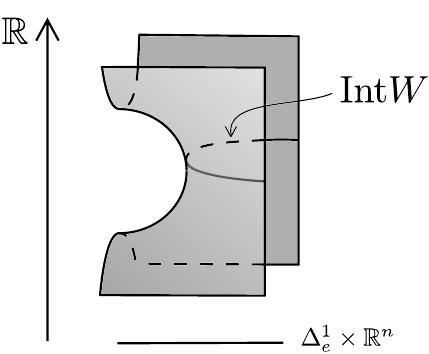}
\caption{A breaking concordance.}
\label{fig:9}
\end{minipage}
\end{figure}

Let $M$ be a closed manifold of dimension $d-1$ bounding a compact manifold $W$. There is a positive valued proper Morse function $m$ on the interior $\mbox{Int}\,{W}$ of $W$ such that $m^{-1}(1/2, \infty)$ is diffeomorphic to $M\times (1/2, \infty)$, and the restriction of $m$ to the latter is the projection onto the second factor. Let $(u, \alpha)$ denote the composition
\[
    \mbox{Int}\,{W}\times S^{n+1} \xrightarrow{(m, \id)} \R_+\times S^{n+1}\longrightarrow \R^{n+2}\simeq \R^{n+1}\times \R,
\]
where the second map is the one that takes a positive scalar $r$ and a vector $v$ of length $1$ to the vector $rv$. By Lemma~\ref{l:9.2}, if all Morse functions are solutions of $\r$, then $u$ is an $\r$-map. Choose a map $\beta$ of $\mbox{Int}\,{W}\times S^{n+1}$ to $\R^{\infty-1}$ such that $\mathbf{u}=(u, \alpha, \beta)$ is a graphic $\r$-map. Identify $\Delta^1_e$ with the opening of $[0,1]$ in $\R$. 
Restrict $\mathbf{u}$ to $u^{-1}(\Delta^{1}_e\times \R^{n})$; then $\mathbf{u}$ is a graphic $\r$-map to $\Delta^1_e\times \R^{n}$. For $i=0,1$, denote the restriction of $\mathbf{u}$ to $u^{-1}(\{i\}\times \R^{n})$ by $\mathbf{u}_i$. Then $\mathbf{u}$ is a concordance of $\mathbf{u}_1$ to $\mathbf{u}_0$. We say that the concordance $\mathbf{u}$
\emph{breaks} the fiber of $\mathbf{u}_1$ over $0\in \R^{n}$. Note that $u_1$ is a submersion $M\times \R\times \R^n\to \R^n$, while $u_0$ is a map to $\R^n$ such that for each $x\in \R^n$ close to $0\in \R^n$, we have $\alpha_0(u_0^{-1}(x))\ne \R$.

\begin{definition}\label{d:7.1} Given an integer $\mathbf{m}\ge 0$ and a smooth manifold $N$, let $hE_\r(\mathbf{m})(N)$
denote the set of pairs $(V, F)$ of 
\begin{itemize}
\item a submanifold $V=V_1\sqcup \dots \sqcup V_m$ in $N\times \R\times \R^{\infty-1}$ such that 
the inclusion $(u, \alpha, \beta)$ of $V$ is a  graphic map of order $1$ and every regular fiber $M$ of $(u, \alpha)$ is zero cobordant, and
\item a normal stable formal $\r$-map $F\co V\to N$ covering $u$. 
\end{itemize}
Then $hE_\r(\mathbf{m})$ is a partial sheaf for each $\mathbf{m}$, and $hE_\r(\bullet)$ is a partial $\Gamma$-sheaf.
\end{definition}

It follows that every pair $(V, F)$ of $hD_\r(\mathbf{m})(N)$ is in $hE_\r(\mathbf{m})(N)$. Indeed, the condition 
$\alpha(u^{-1}(x))\ne \R$ is strictly stronger than the condition that $M$ is zero cobordant; 
if $M$ is a regular fiber of $(u, \alpha)$ over a point $(x, t)$ and $\alpha(u^{-1}(x))\ne a$, then we may choose
the cobordism $W$ to be the inverse image of $[t, a]$ if $t\le a$ or $[a, t]$ if $a<t$ with respect to the map $\alpha|u^{-1}(x)$. 

To simplify notation, for any index $z$, we will write $\mathbf{V}_z$ for a tuple $(V_z, f_z, u_z, \alpha_z, \beta_z)$
of a graphic map $(V_z, u_z, \alpha_z, \beta_z)$ of order $1$ and a normal stable formal $\r$-map $f_z$ covering $u_z$.

\begin{theorem}\label{th:7.1} Let $\r$ be an open stable differential relation imposed on maps of dimension $d>0$ such that every Morse function on a manifold of dimension $d+1$ is a solution of $\r$.
Then the inclusion $|hD_\r(\bullet)|\to |hE_\r(\bullet)|$ is a weak homotopy equivalence.
\end{theorem}

\begin{proof} Clearly, the inclusion in the statement of Theorem~\ref{th:7.1} is a natural transformation. Hence, it remains to show that the map $\varphi\co hD_\r(\mathbf{m})(N)\to hE_\r(\mathbf{m})(N)$
induces an isomorphism of concordance classes for each non-negative integer $\mathbf{m}$ and each closed manifold $N$.  
We will only prove the surjectivity of the induced map;  the 
injectivity is proved similarly, see Lemma~\ref{le:8.6}.

Since we consider sections of sheaves up to concordance, we may identify normal stable formal $\r$-maps with the corresponding tangential stable formal $\r$-maps. 

 Let $\mathbf{V}_0$ be an element in the target of $\varphi$. If $\mathbf{V}_0$
is not in the image of $\varphi$, then, choose a regular value $(x, t)$ of $(u_0, \alpha_0)$. We may assume that near the regular fiber $M$ over $(x,t)$, the map $(u_0, \alpha_0)$ is given by the projection $M\times U\to U$ in the notation of Lemma~\ref{l:8.1}. By Lemma~\ref{l:8.1}, we may modify $f_0$ by homotopy in a neighborhood of the closure of $M\times U$ so that $f_0=du_0$ over $M\times U$. Next, by the construction before Definition~\ref{d:7.1}, there is a concordance of $\mathbf{V}_0$ to $\mathbf{V}_1$ with support in $M\times U$ such that $\alpha_1(u^{-1}_1(x))\ne t$. 
{\ccc Such a  concordance breaks fibers not only over $x$, but also over all points in a neighborhood $O_x$ of $x$. 

We claim that after applying the construction on different $t$-levels over finitely many points $x$ we obtain an element in the image of $\varphi$ concordant to $\mathbf{V}_0$. 
Indeed, a slight perturbation of a solution of an open stable differential relation $\r$ is again a solution of $\r$. Hence, by slightly perturbing the map $(u_0, \alpha_0)$, we may assume \cite[Section 2.1]{AVG} that the set of its non-regular values is a stratified set, each stratum $\Sigma\subset N\times \R$ of which is an embedded manifold of dimension $\le \dim N$. Furthermore, we may assume \cite{AVG} that each non-empty fiber of the projection $\Sigma\subset N\times \R\to N$ is a discrete set of points. Then for any $x\in N$, there exists $t_x\in \R$ such that the pair $(x, t_x)$ is a regular value of $(u_0, \alpha_0)$. Since $N$ is compact, we may choose finitely many points $x$ such that the neighborhoods $O_x$ cover $N$. Furthermore, since the set of regular values of $\alpha_0|u^{-1}(x)$  is open, we may choose $t_x$ to be distinct for different chosen points $x$. Then, applying the breaking concordance over the chosen points $x$, we obtain an element in the image of $\phi$ concordant to $\mathbf{V}_0$.}
\end{proof}

\begin{definition}\label{d:7.3} Given an integer $\mathbf{m}\ge 0$ and a smooth manifold $N$, let $hF_\r(\mathbf{m})(N)$
denote the set of submanifolds
\[
    V=V_1\sqcup \cdots \sqcup V_m \longrightarrow N\times \R\times \R^{\infty-1}
\]
such that the inclusion $(u, \alpha, \beta)$ is a graphic $\r$-map of order $1$, and 
every regular fiber of $(u, \alpha)$ is a manifold cobordant to zero. 
Then $hF_\r(\mathbf{m})$ is a set valued partial sheaf for every $\mathbf{m}$, and $hF_\r(\bullet)$ is a set valued
partial $\Gamma$-sheaf. 
\end{definition}

There is an inclusion $hF_\r(\bullet)\to hE_\r(\bullet)$ of partial $\Gamma$-sheaves that takes an element $(V, u, \alpha, \beta)$ in 
$hF_\r(\mathbf{m})(N)$ to the element $(V, f=du, u, \alpha, \beta)$ in 
$hE_\r(\mathbf{m})(N)$. 

\begin{theorem}\label{th:10.6} The inclusion $|hF_\r(\bullet)|\to |hE_\r(\bullet)|$  is a weak homotopy equivalence. 
\end{theorem}

To prove Theorem~\ref{th:10.6} we need a so-called Destabilization Lemma such as one in \cite{Sa}
and in \cite{EGM}. Observe that every formal $\r$-map $F\co TM\to TN$ represents a tangential stable formal $\r$-map. It is also easy to see that not every tangential stable formal $\r$-map is represented by a formal $\r$-map.
For example, there is a tangential stable formal immersion $TS^2\times \R\to T\R^2\times \R$, but there is no formal immersion $TS^2\to T\R^2$ of the $2$-sphere $S^2$ into $\R^2$. However, for many open stable differential relations $\R$, every tangential stable formal $\r$-map is concordant to a formal $\r$-map, \cite[Section 11]{Sa}, \cite[Section 2.5]{EGM}; e.g., the mentioned tangential stable formal immersion of the sphere is concordant to the immersion of an empty manifold.   

We will prove shortly a version of the Destabilization Lemma for all open stable differential relations, see Lemma~\ref{l:11.1}. Namely, we will show that for a compact manifold $N$, every element in  $hE_\r(\mathbf{1})(N)$ is concordant to an element in which the tangential stable formal solution is a formal solution.

\begin{proof}[Proof of Theorem~\ref{th:10.6}] We will show that for each closed manifold $N$, the map $hF_\r(\mathbf{m})(N)\to hE_\r(\mathbf{m})(N)$
induces an isomorphism of concordance classes. In fact we will show only the surjectivity;  the 
injectivity is proved similarly, see \ref{le:8.6}.

Every element $\mathbf{V}_0$  in $hE_\r(\mathbf{m})(N)$ is 
concordant to an element $\mathbf{V}_1$ such that each path component of $V_1$
is an open manifold. Indeed, there is a regular value $t$ of $\alpha_0$. Let $s\co [0,1]\times \R\to \R$ be 
the map $s(a,b)=t+(1-a)b$. It is transverse to $\alpha_0$, and therefore the map 
\[
    N\times [0,1] \times \R\times \R^{\infty} \longrightarrow N\times \R\times \R^{\infty-1},
\]
\[
  (x, y, t, z) \mapsto (x, s(y,t), z)
\]
pulls $\mathbf{V}_0$ back to a concordance from $\mathbf{V}_0$ to the desired element $\mathbf{V}_1$. In 
fact, $V_1=\alpha_0^{-1}(t)\times \R$. 

By the Destabilization Lemma~\ref{l:11.1}, we may assume that the stable formal $\r$-map $F_1$ is represented by a formal $\r$-map which we continue to denote by $F_1$. By the Gromov h-principle~\cite{Gro}, the formal $\r$-map $F_1$ covering $u_1$ is homotopic to 
the formal $\r$-map $F_2$ covering $u_2$ such that $F_2=du_2$. Without loss of generality we may assume that $\beta_1$ is an embedding. Then
$(V_1, u_2, \alpha_1, \beta_1)$ is an element in $hF_\r(\mathbf{m})(N)$, which, as an element in $hE_\r(\mathbf{m})(N)$ is concordant to $\mathbf{V}_0$. 
\end{proof}

\begin{corollary}\label{c:10.7} The classifying $\Gamma$-space $Bh\M_\r(\bullet)$ is weakly homotopy  
equivalent to the geometric realization $|hF_\r(\bullet)|$. 
\end{corollary}

Corollary~\ref{c:10.7} provides us with a convenient model (see Definition~\ref{d:7.3}) for the classifying $\Gamma$-space $Bh\M_\r(\bullet)$.

\section{The Destabilization Lemma}\label{s:11a}

In this section we prove the Destabilization Lemma. The proof heavily relies on deep facts from the singularity theory. For readers' convenience we review the most important notions in Appendix \ref{a:1}. 

Let $f\co (X, x)\to (Y, f(x))$ be a map germ. Recall that an \emph{unfolding} $(F, i, j)$ of $f$ is a cartesian diagram of map germs
\[
     \begin{CD}
     (P, p) @>F>> (Q, F(p)) \\
     @AiAA @AjAA \\
     (X, x) @>f>> (Y, f(x)),
     \end{CD}
\]
where $i,j$ are immersion germs such that $j$ is transverse to $F$. The \emph{dimension} of an unfolding is the difference $\dim P-\dim X$. 
By definition, a \emph{morphism}
$(\varphi, \psi)\co (F, i, j)\to (F', i', j')$
of two unfoldings of a map germ $f$ is a commutative diagram of map germs (in order to simplify notation we will omit the reference points of map germs):

\begin{figure}[h]
\[
\xymatrix{
P \ar[rrr]^F \ar[dd]^{\varphi} & & &  Q \ar[dd]^{\psi} \\
  & X \ar[ul]^i \ar[r]^f \ar[dl]^{i'} & Y \ar[ur]^j \ar[dr]^{j'}  & \\
P' \ar[rrr]^{F'} & & & Q'.   
}
\]
\caption{A morphism diagram}\label{d:1}
\end{figure}
An unfolding $(F', i', j')$ of a map germ $f$ is said to be \emph{versal} if for every unfolding $(F, i, j)$ of $f$ there is a morphism $(F, i, j)\to (F', i', j')$ of unfoldings.  It is known~\cite[page 91]{GWPL} that an unfolding is versal if and only if it is \emph{stable} (for a definition, see \ref{a:1}). On the other hand, any two stable unfoldings of the same dimension of a given map germ $f$ are isomorphic~\cite[page 86]{GWPL}. 

Let us spell out the definition of a morphism in the case where $\psi$ is an embedding. We may replace all manifolds in the diagram in  Figure~\ref{d:1} with coordinate neighborhoods of reference points, and thus, obtain a new diagram, see Figure~\ref{d:2}. 
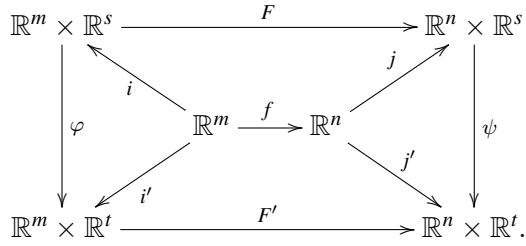
\begin{figure}[h]
\[
\xymatrix{
\R^m\times \R^s \ar[rrr]^F \ar[dd]^{\varphi} & & &  \R^n\times \R^s \ar[dd]^{\psi} \\
  & \R^m \ar[ul]^i \ar[r]^f \ar[dl]^{i'} & \R^n \ar[ur]^j \ar[dr]^{j'}  & \\
\R^m\times \R^t \ar[rrr]^{F'} & & & \R^n\times \R^t.   
}
\]
\caption{A morphism diagram}\label{d:2}
\end{figure}
Since $j, j'$ and $\psi$ are immersion germs, we may assume that these map germs are the standard inclusion germs. Since $j$ is transverse to $F$ and $j'$ is transverse to $F'$, we may assume that 
\[
   F(x,v)=(g(x, v), v), \qquad x\in \R^m, v\in \R^s,
\]
\[
   F'(x, u)=(g'(x, u), u), \qquad x\in \R^m, u\in \R^t, 
\]
for some map germs $g$ and $g'$. Chasing the diagram in Figure~\ref{d:2}, we deduce now that $\varphi$ is an immersion germ with image in $\R^m\times \R^s\subset \R^m\times \R^t$. Therefore, by a change of coordinates, we may assume that $\varphi$ is also a standard embedding. In other words, we may assume that $F$ is a restriction of $F'$.   

Recall that by definition a map germ $f$ is a solution germ of an open stable differential relation $\r$ if its stable unfolding is a solution of $\r$, see \ref{a:1}. Observe that a stable unfolding of $f$ is not unique. However, Theorem~\ref{th:11.1a} below shows that the notion of a solution is well-defined.   

\begin{theorem}\label{th:11.1a} Let $f$ be a solution germ of an open stable differential relation $\r$. Then, for every unfolding $(F, i, j)$ of $f$, the map germ $F$ is a solution of $\r$.   
\end{theorem}
\begin{proof} Let $(F', i', j')$ be a stable unfolding of $f$. Since it is versal, there is a morphism $(\psi, \varphi)$ of the unfolding $(F, i, j)$ to $(F', i', j')$, see Figure~\ref{d:1}.  Without loss of generality we may assume that the map germ $\psi$ is an embedding germ; if necessary we can always replace $F'$ with $F'\times \id_{\R^k}$. 
Then, the map germ $F$ is stably unfolded by $(F', \varphi, \psi)$, and therefore $F$ is a solution germ of $\r$. 
\end{proof}

Next, let us show how a solution germ of an open stable differential relation can be straightened.
Let 
\[
      f\co \R^m\times \R^k\to \R^n\times \R^k
\]
be a solution germ of $\r$ at $0$ transverse to $\R^n\times \{0\}$; the map $f$ is not required to preserve the product structures. Then after a change of coordinates, we may assume that $f$ is given by $(x, s)\mapsto (g(x, s), s)$, where $x\in \R^m$ and $s\in \R^k$. We may also assume that the map $f$ representing the $\r$-map germ under consideration is actually an $\r$-map defined over all $\R^m\times \R^k$. Then for every fixed $s_0$, the function $x\mapsto g(x, s_0)$ {\ccc is unfolded by $f$, and therefore it} is 
an $\r$-map. Let $\{f_t\}$ be the family of map germs parametrized by $t\in \R$ and defined by  
\[
    f_t\co (x, s) \mapsto (g(x, (1-t)s), s). 
\]
In particular, $f_0=f$ and $f_1$ is a product of the map $g(-,0)\co \R^m\to \R^n$ and the identity map of $\R^k$. We note that 
the family $\{f_t\}$ forms a map 
\[
     F\co \R^m\times \R^k\times \R\to \R^n\times \R^k\times \R
\]
that together with the embeddings $i$ of $\R^m\times \{s_0\}\times \{t_0\}$ and $j$ of $\R^n\times \{s_0\}\times \{t_0\}$ is an unfolding of $g(-, (1-t_0)s_0)$ for every pair of fixed parameters $s_0$ and $t_0$. By Theorem~\ref{th:11.1a}, the homotopy $\{f_t\}$ of map germs is through solution germs of $\r$. 
Thus we constructed a homotopy parametrized by $t\in [0,1]$ from a given map germ $f$ to the straightened map germ $g(-, 0)\times \id_{\R^k}$ through solution germs of $\r$.  

We will apply the straightening homotopy in the following situation (Lemma~\ref{l:12.1a}). Let $u\co \R^{m+k}\to \R^n\times \R^k$ be an $\r$-map transversal to the copy $\R^{n}\times \{0\}$ of $\R^n$, and let $V\subset \R^{m+k}$ be the inverse image of {\ccc $\R^n\times \{0\}$} with respect to $u$. Then $u|V$ is a map $V\to \R^n$, and $du|V$ is a representative
\[
    TV\times \R^k\longrightarrow T\R^n\times \R^k
\] 
of a stable formal $\r$-map. 

\begin{lemma} \label{l:12.1a}  There is a straightening homotopy of $du|V$ through stable formal $\r$-maps to a stable formal $\r$-map represented by 
\[
   g_0\times \id_{\R^k}\co TV\times \R^k\longrightarrow T\R^n\times \R^k,
\]
where $g_0$ is a formal $\r$-map. 
\end{lemma}

Now we are in position to formulate and {\ccc prove} the Destabilization Lemma. 

\begin{lemma}[Destabilization Lemma]\label{l:11.1} For a compact manifold $N$, every element 
\[
(u_0\co V_0\to N, \alpha_0, \beta_0, F_0) \qquad \textrm{in} \quad hE_\r(\mathbf{1})(N)
\]
is concordant to an element $(u_1\co V_1\to N, \alpha_1, \beta_1, F_1)$ in which the tangential stable formal solution is represented by a formal solution $TV_1\to TN$. 
\end{lemma}

The tangential stable formal solution of every element in $hE_\r(\mathbf{1})(N)$ is represented by a formal solution $TV\times \R^n\to TN\times \R^n$ for some $n$. The Destabilization Lemma asserts that after changing a given element in $hE_\r(\mathbf{1})(N)$ by concordance, the value $n$ can be chosen to be $0$. 

\begin{proof} Recall that an element in $hE_\r(\mathbf{1})(N)$ consists of a submanifold $V_0$ in the space $N\times \R\times \R^{\infty-1}$ with the inclusion map $(u_0, \alpha_0, \beta_0)$ and a tangential stable formal $\r$-map $F_0$ covering $u_0$. The inclusion $(u_0, \alpha_0, \beta_0)$ of the manifold $V_0$ is required to be a graphic map of order $1$, and every regular fiber of $(u_0, \alpha_0)$ is required to be zero cobordant. To begin with, observe that since $N$ is compact, the tangential stable formal solution $F_0$ admits a representation by a formal $\r$-map 
\[
    \mathbf{F}_0\co T(V_0\times \R^k)\longrightarrow T(N\times \R^k)
\]
covering $u$ in the sense that each map germ 
\[
\mathbf{F}_0|_{x\times \{0\}}\co T_xV_0\times \R^k \longrightarrow  T_{u_0(x)}N\times \R^k
\] 
belongs to the equivalence class $F_0|_{x}$.   
By the Gromov h-principle~\cite{Gro}, there is a homotopy $\mathbf{F}=\{\mathbf{F}_t\}$ of $\mathbf{F}_0$ through 
\begin{figure}[h]
\centering
\includegraphics[height=1.9in]{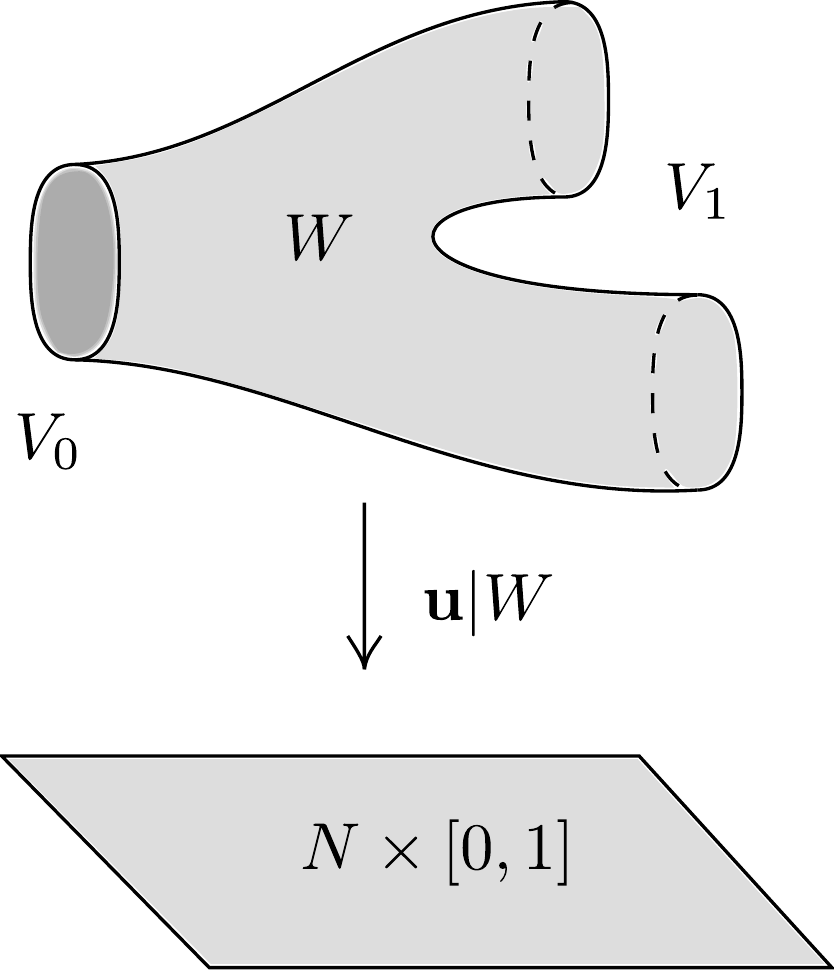}
\caption{Projection of $W$ to $N\times [0,1]$.}
\label{fig:14}
\end{figure}
formal solutions parametrized by $t\in [0,1]$ such that $\mathbf{F}_1$ is the formal solution represented by a genuine solution $\mathbf{u}_1\co V_0\times \R^k\to N\times \R^k$ in the sense that $\mathbf{F}_1=d\mathbf{u}_1$. The homotopy $\mathbf{F}$ itself is a map
\[
\mathbf{F}\co T(V_0\times \R^k)\times [0,1] \to T(N\times \R^k)\times [0,1],
\]
where $[0,1]$ stands for the vector bundle of dimension $0$ over $[0,1]$. For example, the source space of $\mathbf{F}$ is  the total space of the vector bundle over $V_0\times \R^k\times [0,1]$ with fiber over $(x, t, s)$ given by $T_xV_0\times T_t\R^k$. 
We may assume that the map 
\[
   \mathbf{u}\co V_0\times \R^k\times [0,1]\to N\times \R^k\times [0,1]
\]
underlying the homotopy $\mathbf{F}$
is smooth and transverse to the submanifold $N\times\{0\}\times [0,1]$, which we identify with $N\times [0,1]$.  
In particular, for $W=\mathbf{u}^{-1}(N\times [0,1])$, the map 
\[
   \mathbf{u}|W\co W\to N\times [0,1]
\]
is a smooth concordance, see Figure~\ref{fig:14}, covered by a family  $\mathbf{F}|W$ of stable formal solutions. In particular,  we have a commutative diagram:
\[
\begin{CD}
   T(V_0\times \R^k)\times [0,1]|_{W} @>\mathbf{F}|W>> (TN\times \R^k)\times [0,1] \\
   @VVV @VVV \\
   W @>\mathbf{u}|W>> N\times [0,1],
\end{CD}
\]
where we identify $T(N\times \R^k)|_{N\times \{0\}}$ with $TN\times \R^k$. It is convenient to represent the family $\mathbf{F}|W$ of stable formal solutions by the equivalent family of stable formal solutions $\mathbf{F}\times \id_{T[0,1]}|W$, which we will also denote by  $\mathbf{F}|W$. Then we obtain a commutative diagram
\[
\begin{CD}
   T(V_0\times \R^k\times [0,1])|_{W} @>\mathbf{F}|W>> TN\times \R^k\times T[0,1] \\
   @VVV @VVV \\
   W @>\mathbf{u}|W>> N\times [0,1],
\end{CD}
\]

Since the map $\mathbf{u}$ is transverse to $N\times [0,1]$ and the normal bundle of $N\times [0,1]$ in the target space of $\mathbf{u}$ is trivial, we deduce that the normal bundle of $W$ in $V_0\times \R^k\times [0,1]$ is trivial of dimension $k$. In other words, we may identify $T(V_0\times \R^k\times [0,1])|_W$ with $TW\times \R^k$. Consider the so-obtained concordance 
\[
\begin{CD}
   T{W}\times \R^k @>\mathbf{F}|W>> TN\times \R^k\times [0,1] \\
   @VVV @VVV \\
   W @>\mathbf{u}|W>> N\times [0,1].
\end{CD}
\]
The boundary of $W$ is the disjoint union of $V_0$ and some manifold $V_1$ such 
that $\mathbf{u}|W$ maps $V_0\subset \partial W$ to $N\times \{0\}$ and $V_1\subset \partial W$ to $N\times \{1\}$. The
\begin{figure}[h]
\centering
\includegraphics[height=1.7in]{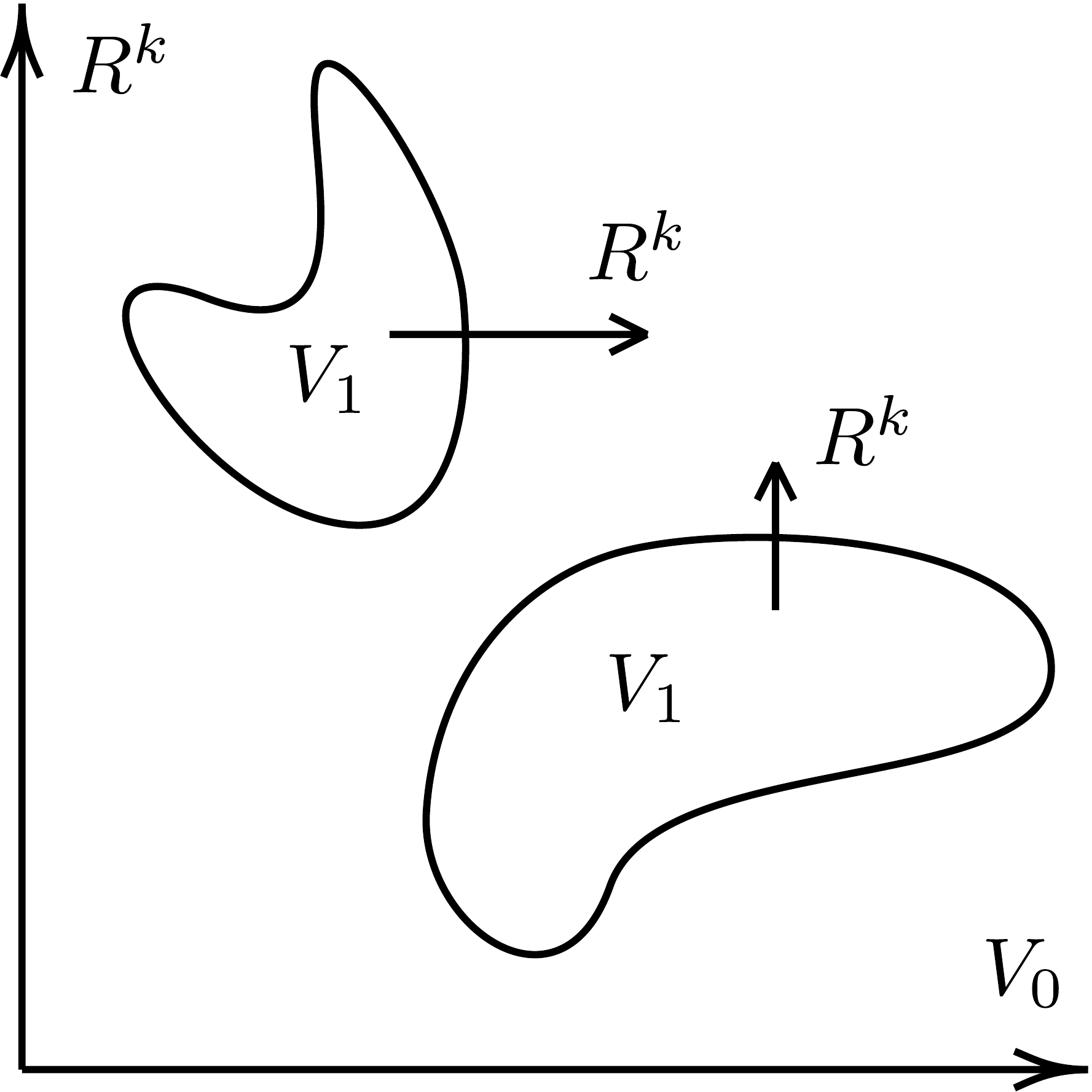}
\caption{The manifold $V_1$}
\label{fig:15}
\end{figure}
map $u_0=\mathbf{u}|V_0$ is covered by the formal map $\mathbf{F}_0|V_0$ representing the original stable formal map $F_0$, while the map $u_1=\mathbf{u}_1|V_1$ is covered by a formal map $F_1=\mathbf{F}_1|V_1$ so that we have a commutative diagram:
\[
\begin{CD}
   TV_1\times \R^k @>\mathbf{F}_1|V_1>> TN\times \R^k  \\
   @VVV @VVV \\
   V_1 @>\mathbf{u}_1|V_1>> N.
\end{CD}
\]
Observe that the tangential stable formal solution $\mathbf{F}_1|V_1$ may not be represented 
by $\mathbf{u}_1|V_1$; we only know that $\mathbf{F}_1|V_1$ is a restriction of a formal solution 
\[
\mathbf{F}_1\co T(V_0\times \R^k)\longrightarrow T(N\times \R^k)
\]
 represented by the genuine solution $\mathbf{u}_1$. However, in view of the straightening homotopy described in Lemma~\ref{l:12.1a}, we may assume that $\mathbf{F}_1|V_1$ is represented by $\mathbf{u}_1|V_1$. 

{\ccc Note that for any extension of $\alpha_0$ to a function $\alpha$ over $W$, each regular fiber of $(\alpha, \mathbf{u}|W)$ is cobordant to a regular fiber of $(\alpha_0, u_0)$ and therefore is zero cobordant. }Therefore,  
 we may extend $\alpha_0$ and $\beta_0$ over $W$ so that together with $\mathbf{u}|W$ and $\mathbf{F}|W$ they define a desired concordance. 
\end{proof}

\section{The proof of the bordism principle}\label{sec:8}

We have seen that the functors $B\M_\r(\bullet)$ and $Bh\M_\r(\bullet)$ are weakly homotopy equivalent to 
the geometric realizations of partial $\Gamma$-spaces $D_\r(\bullet)$ and $hF_\r(\bullet)$ respectively. Recall that 
$hF_\r(\mathbf{m})(N)$ is defined to be the set of graphic $\r$-maps $(f, \alpha, \beta)$ to $N$ of order $1$ such that every regular fiber of $(u, \alpha)$ is a manifold cobordant to zero, while $D_\r(\mathbf{m})(N)$ is the subset of $hF_\r(\bullet)(\mathbf{m})(N)$ of broken ones. In particular, there is an inclusion of classifying spaces
\begin{equation}\label{eq:7.5}
 \alpha_\r\co   B\M_\r\longrightarrow Bh\M_\r. 
\end{equation}
In its turn, the map (\ref{eq:7.5}) gives rise to the map of infinite loop spaces 
\begin{equation}\label{eq:3.5}
   \Omega B\M_\r\longrightarrow \Omega Bh\M_\r,
\end{equation}
which can be identified with the map of infinite loop spaces of spectra $\Omega^{\infty}\mathbf{M}_\r\to \Omega^{\infty}h\mathbf{M}_\r$.  By definition, the \emph{b-principle map} for an open stable differential relation $\r$ is the composition
\[
     \M_\r \longrightarrow \Omega B\M_\r \xrightarrow{\Omega\alpha_\r} 
     \Omega Bh\M_\r \stackrel{\simeq}\longrightarrow h\M_r.
\]
Thus in order to prove the b-principle it suffices to show that $\alpha_\r$ is a homotopy equivalence. 

We can now derive Theorem~\ref{th:1} in the case where $d<1$; in particular, we derive the Barratt-Priddy-Quillen Theorem (see Theorem~\ref{th:bpq}).

\begin{proof}[Proof of Theorem~\ref{th:1} for $d<1$]
Note that the condition $\alpha(u^{-1}(x))\ne \R$ in the definition of $\mathcal{B}\M_\r(\bullet)\simeq B\M_\r(\bullet)$
and the cobordism condition in the definition of $|hF_\r(\bullet)|\simeq Bh\M_\r(\bullet)$ are void in 
the case $d<1$. Thus, in this case, the mentioned models for  $B\M_\r(\bullet)$ and $hB\M_\r(\bullet)$ coincide. 
\end{proof}

\begin{proof}[Proof of Theorem~\ref{th:1} for $d\ge 1$] We need to show that the sets of concordance classes of sections in $D_\r(\mathbf{m})(N)$ and $hF_\r(\mathbf{m})(N)$ are isomorphic. This immediately follows from the construction of concordance that breaks fibers, see the proof of Theorem~\ref{th:7.1}. \end{proof}

\section{The homotopy type of the group completion of $\M_\r$}\label{s:11}

In this section we recall a fairly simple spectrum $\mathbf{B}_\r$ from \cite{Sa}, and in \S\ref{s:11} we show that the topological space $\Omega^{\infty}\mathbf{B}_\r$ is homotopy equivalent  to the infinite loop space  classifying the cohomology theory $h_\r$

 One of the first constructed models of $\mathbf{B}_\r$ is due to Wells~\cite{We} who used a  model of $\mathbf{B}_\r$ in the case of the immersion differential relations to compute bordism groups of immersions of dimension $<0$. Later Eliashberg introduced "equivariant  spectra" to study $k$-mersions, Legendrian and Lagrangian immersions~\cite{El}. Audin~\cite{Au3} modified the Eliashberg equivariant construction and obtain spectra that are prototypes of the spectra that we will describe in the current section.  Using the Audin-Eliashberg spectra explicit computations can be carried out (see books \cite{Au3} by Audin, \cite{Va} by Vassiliev, and Theorem~\ref{th:EA} below).  Over years, many related classifying spaces were constructed in singularity theory; notably, by Ando~\cite{An},  Szucz~\cite{Sz}, Rim\'anyi-Szucz~\cite{RS}, Kazarian~\cite{Kaz}, and the author \cite{Sa}.  Already mentioned the Tillmann-Madsen spectra and the sphere spectrum are also examples of $\mathbf{B}_\r$ corresponding to the cases of differential relations of submersions and coverings respectively.




\noindent{\bf Construction of $\Omega^{\infty}\mathbf{B_\r}$.\ } Let $\mathop\mathrm{EO}_n\to \BO_n$ denote the universal principle $\mathop\mathrm{O}_n$-bundle. For a given open stable differential relation $\r$ imposed on maps of dimension $d$, let $J_\r(n)$
denote the space of {\ccc jets of} $\r$-map germs
\[
     (\R^{n+d}, 0) \longrightarrow (\R^n, 0) 
\]
{\ccc
endowed with the Whitney $C^{\infty}$-topology. 
There is a left action of $\mathop\mathrm{O}_n$ on the space $J_\r(n)$; {\ccc in terms of representatives} an orthonormal transformation $h$ takes an $\r$-germ $f$ to an $\r$-germ $h\circ f$. The Borel construction results in a space 
\[
      B'_{n+d}\co={\mathop\mathrm{EO}}_n \times_{\mathop\mathrm{O}_n} J_r(n). 
\]
together with a fiber bundle projection $f_{n+d}\co B'_{n+d}\to \BO_n$. The colimit of maps $f_{n+d}$   is a normal $(B_\r, f_\r)$ structure $f_\r\co B_\r\to \BO$ of dimension $-d$ with associated Thom spectrum $\mathbf{B}_\r$, and the infinite loop space $\Omega^{\infty}\mathbf{B}_\r$. 

}


\begin{example}\label{e:3} For the submersion differential relation $\mathcal{R}$ imposed on maps of dimension $d$, the space $B_\r$ can be identified with the space of equivalence classes of linear maps $\alpha\co \R^{n+d}\to \R^{\infty}$ of rank $n$ for some $n\ge 0$; the equivalence relation is generated by equivalences $\alpha\sim \alpha'$, where 
$ \alpha\co \R^{n+d}\longrightarrow \R^{\infty}$  and 
\[
 \alpha'\co \R^{n+d+1}\stackrel{\equiv}\longrightarrow  \R\times\R^{n+d}\xrightarrow{\id_{\R}\times \alpha}  \R\times \R^{\infty}\stackrel{\equiv}\longrightarrow \R^{\infty}.
\]
There is a vector bundle $\ker\alpha$ of dimension $d$ over $B_\r$ whose fiber over the class of $\alpha$ is given by the kernel of $\alpha$. Thus, there is a well-defined map $\ker\alpha\co B_\r\to \BO_d$. The fiber of this map is contractible and therefore $B_\r\simeq \BO_d$. The map $f_\r\co  B_\r\to \BO\times\{d\}$ of the tangential structure is the standard inclusion $\BO_d\to \BO$ since it classifies the stable vector bundle represented by the kernel of the map $\alpha$.   
\end{example}

\begin{example} For the submersion differential relation $\r$ imposed on \emph{oriented} maps of dimension $2$, the tangential structure is given by the standard inclusion $\mbox{BSO}_2\to \mbox{BSO}$, and therefore the spectrum $\mathbf{B}_\r$ in this case coincides with the Madsen-Tillmann spectrum $\mathbf{MTSO}(2)$ which appears in the statement of the Madsen-Weiss theorem~\cite{MW} (the generalized Mumford Conjecture).  
\end{example}

\begin{example} \label{ex:6.4} By an argument similar to that in Example~\ref{e:3} we deduce that for the immersion differential relation on maps of dimension $d\le 0$, the map $f\co B\to \BO\times \{d\}$ of the normal structure is the standard inclusion $\BO_d\to \BO$. In particular, for the covering differential relation ($d=0$), the spectrum $\mathbf{B}_\r$ is the sphere spectrum.  
\end{example}

\begin{example} In the case of the differential relation whose solutions are all maps of dimension $d$, the space $B_\r$ is homotopy equivalent to $\BO$, and therefore 
$\mathbf{B}_\r= \Sigma^{-d}\mathbf{MO}$, where $\mathbf{MO}$ is the Thom spectrum. 
\end{example}

\begin{warn}\label{w:7.9} We will show that $\Omega^{\infty}h\mathbf{M}_\r$ is homotopy equivalent to $\Omega^{\infty}\mathbf{B}_\r$. However, the two deloopings provided by the spectra $h\mathbf{M}_\r$ and $\mathbf{B}_\r$ are different. Indeed, we have
\[
   \pi_ih\mathbf{M}_\r\simeq \pi_i\mathbf{B}_\r \qquad \mathrm{for}\quad i\ge 0.
\]
However, $\pi_ih\mathbf{M}_\r=0$ for $i<0$, while $\pi_i\mathbf{B}_\r$ may not be trivial for $i<0$ in general. 
\end{warn}


To prove that the topological space $h\M_\r$ is homotopy equivalent to $\Omega^{\infty}\B_\r$, we need the following lemma. 

\begin{lemma}\label{l:6.3} The concordance classes of tangential stable formal $\mathcal R$-maps to a compact manifold $N$ are in bijective correspondence with cobordism classes of $(B_\r, f_\r)$ maps to $N$. 
\end{lemma}
\begin{proof} The desired correspondence is essentially the same as the Pontrjagin-Thom construction in \cite{Sa}. For reader's convenience, 
we sketch the construction. 

Let $F$ be a stable formal $\r$-map covering a smooth proper map $u\co V\to N$. We will show that $F$ determines a normal $(B_\r, f_\r)$ structure on $u$, i.e.,  $F$ defines a lift
\[
\xymatrix{
 &  & & B_\r \ar[d]^{f_\r} \\
 V \ar@{-->}[urrr] \ar[rrr]^{u^*TN\ominus TV} & & &\BO\times \{-d\}. 
}
\]
of the classifying map of the stable vector bundle $u^*TN\ominus TV$ over $V$. 
The space $B_\r$ consists of equivalence classes of pairs $(i, P)$ of a solution germ $i\co \R^{n+d}\to P$ and a subspace $P\subset \R^{\infty}$ of dimension $n>0$ which represents a point in $\BO_n\subset \BO$. In terms of representatives, the projection is defined by $(i, P)\mapsto P$.

Since $V$ is compact, we may assume that $F$ is given by a family of map germs
\[
    F_x\co T_xV\otimes \varepsilon^{k}\longrightarrow u^*T_{u(x)}N\otimes \varepsilon^{k},
\]
parametrized by $x\in V$, 
where $k$ is a fixed non-negative integer. Choose an embedding $V\subset S^{n+d}$ into a sphere of sufficiently big dimension so that $V$ lies in the equator $S^{n+d-k}$ where $k$ is the above fixed integer.  Then the product with $u$ defines a new embedding $V\subset S^{n+d}\times N$. Choose a map $\nu\co V\to \BO_n$ classifying the normal bundle of $V$ in $S^{n+d}\times N$. It  represents $TN\ominus TV$, and therefore in order to construct a normal $(B_\r, f_\r)$ structure on $u$ it remains to construct a family of $\r$-map germs $\varepsilon^{n+d}\to \nu_x$ parametrized by $x\in V$. The desired  family is given by the compositions
\[
   \varepsilon^{n+d}\simeq (T_xV\oplus \varepsilon^k)\oplus T_x^{\perp}V \xrightarrow{F_x\oplus \id} 
(u^*T_{u(x)}N\oplus \varepsilon^{k}) \oplus T_x^{\perp}V \simeq \nu_x,
\]
where $T_x^{\perp}V$ is the normal bundle of $V$ in $S^{n+d-k}$.

The converse construction shows that a $(B_\r, f_\r)$-structure on a map $u$ determines a tangential stable formal solution.
These two constructions are sufficient to establish Lemma~\ref{l:6.3}. 
\end{proof}

In view of the Brown representability theorem, see Corollary~\ref{c:10.4}, Lemma~\ref{l:6.3} implies Corollary~\ref{c:14.3}. For an H-space $X$, let $X_0$ denote the path component of $X$ containing the base point. 

\begin{corollary}\label{c:14.3} The spaces $[h\M_\r]_0$ and $[\Omega^\infty\mathbf{B}_\r]_0$ are homotopy equivalent. 
\end{corollary}

In the case where $N$ is a point, Lemma~\ref{l:6.3} reduces to Lemma~\ref{c:14.4}.

\begin{corollary}\label{c:14.4} There is an isomorphism of monoids $\pi_0h\M_\r\simeq \pi_0 \Omega^{\infty}\mathbf{B}_\r$. In particular, the monoid $\pi_0h\M_\r$ is a group and therefore $h\M_\r$ is an infinite loop space. 
\end{corollary}

\begin{theorem}\label{th:14.5} The topological spaces $h\M_\r$ and $\Omega^{\infty}\mathbf{B}_\r$ are homotopy equivalent. 
\end{theorem}
\begin{proof} Theorem~\ref{th:14.5} immediately follows from Corollary~\ref{c:14.3} and Corollary~\ref{c:14.4}. 
\end{proof}


\section{Applications of the b-principle}\label{s:8}

\subsection{Cobordism monoids of solutions}\label{ss:5.1}

In this section we describe the set of homotopy classes $[N, \M_\r]$ of maps of smooth manifolds $N$. These are interpreted as cobordism classes of $\mathcal R$-maps into $N$. Definition~\ref{d:cob}
of the \emph{cobordism equivalence relation} differs from the standard one only in that it avoids manifolds with boundaries.

To simplify notation we often
suppress $\beta$ in the notation of a graphic map 
and denote a graphic $\mathcal R$-map $(f, \beta)$ by $f\colon V\to N$. 

\begin{definition}\label{d:cob}
Two graphic $\r$-maps $f_i\co V_i\to N$, with $i=0,1$, are said to be \emph{cobordant}
if  there is a graphic $\mathcal{R}$-map $f\co V\to N\times \R$ 
such that $\pi^*f_1=f$ over $N\times (-\infty, 0)$ and $\pi^*f_2=f$ over $N\times (1, \infty)$, where 
$\pi$ is the projection of $N\times \R$ onto the first factor. 
Two proper $\mathcal{R}$-maps are said to be \emph{cobordant} if they admit lifts to 
cobordant graphic $\mathcal R$-maps.
\end{definition}

The set of cobordism classes of proper $\mathcal{R}$-maps into a manifold $N$ forms a commutative monoid, called the \emph{monoid of proper $\mathcal{R}$-maps into $N$}. The monoidal operation is given by taking the disjoint union of maps, i.e., the sum of cobordism classes of maps $f_i\co  V_i\to N$, with $i=1,2$, is defined to be the cobordism class of the composition 
\[
V_1\sqcup V_2\xrightarrow{f_1\sqcup f_2} N\sqcup N\longrightarrow N
\]
where the latter map is given by the identity map on each of the two components of $N\sqcup N$.  The identity element is represented by the map of an empty manifold.

Note that the space of homotopy classes $[N, \mathcal{M}_\r]$ of continuous maps of a smooth manifold $N$ is a monoid since 
$\M_\r$ is an $H$-space. 


\begin{theorem}\label{th:2.3} The monoid of proper $\mathcal{R}$-maps into an arbitrary manifold $N$ is isomorphic to  $[N, \mathcal{M}_{\mathcal R}]$.
\end{theorem}
\begin{proof} The argument is similar to that given by Madsen and Weiss~\cite[\S A.1]{MW}; for reader's convenience we sketch the construction of the correspondence.   

Given a proper $\mathcal{R}$-map $f$ to $N$, let $\beta$ be an embedding of the source manifold of $f$ to $\R^{\infty}$ with image in a compact set. Choose a triangulation of $N$ with ordered vertex set 
so that all embeddings of simplices are coherent and transverse to $f$. Choose an extension of 
the embedding $\Delta^n\to N$ of each simplex in the triangulation to an embedding $j\co \Delta^n_e\to N$. Then the maps $(j^*f, \beta|f^{-1}j(\Delta^n_e))$ are graphic $\mathcal{R}$-maps into $\Delta^n_e$, each of which has a unique 	counterpart in the geometric realization $\mathcal{M}_{\mathcal R}$. In view of coherency of embeddings $j$,  we obtain a map $N\to \mathcal{M}_{\mathcal R}$. Its homotopy class depends only on the cobordism class $[f]$. 

Conversely, given a map $g\co N\to \mathcal{M}_{\mathcal R}$, choose a triangulation of $N$ and modify $g$ by homotopy so that it takes each simplex of dimension $k\ge 0$ in the triangulation of $N$ linearly onto a simplex of dimension $\le k$ in $\mathcal{M}_{\mathcal R}$. There is a smooth homotopy $h_t$ of $h_0=\id_N$ with $t\in [0,1]$ such that 
\begin{itemize}
\item $h_t$ maps each simplex of the triangulation of $N$ onto itself for each $t$, 
\item for each simplex $\Delta$ in the triangulation of $N$, the restriction of $h_t$ to $h_t^{-1}(\mathring{\Delta})$ is a submersion onto the interior $\mathring{\Delta}$ of $\Delta$, and
\item each simplex has a neighborhood that maps by $h_1$ onto the simplex.
\end{itemize}

Indeed to construct the homotopy $h_t$, we may choose by induction in $n\ge 0$ a homotopy $h_t^n$ of the (non-extended) simplex $\Delta^n$ so that $h^n_t\delta_i=\delta_ih^{n-1}_t\colon \Delta^{n-1}\to \Delta^n$ for each $n, i$ and $t$. Then the homotopies $\{h_t^n\}$ of the identity maps of simplices in the triangulation of $N$ agree and define a desired homotopy $h_t$. 
The map $gh_1$ pulls back a desired $\mathcal R$-map to $N$.  

It also follows that the monoidal operation on cobordism classes of proper $\mathcal R$-maps into $N$ agrees with that on $[N, \M_\r]$. 
\end{proof}

\begin{corollary} The cohomology group $k^0_\r(pt)$ of a point is the group completion of the monoid of cobordism classes of $\r$-maps to the point.   
\end{corollary}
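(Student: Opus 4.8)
The plan is to derive the Corollary directly from Theorem~\ref{th:2.3} together with the description of $k^*_{\mathcal R}$ as the cohomology theory classified by the infinite loop space $\Omega^{\infty}\mathcal M_{\mathcal R}$. First I would specialize Theorem~\ref{th:2.3} to the case $N = pt$: it gives a bijection between the set of cobordism classes of proper $\mathcal R$-maps to a point and the set $[pt, \mathcal M_{\mathcal R}] = \pi_0(\mathcal M_{\mathcal R})$. One checks easily that this bijection is a monoid isomorphism, since on both sides the operation is induced by disjoint union of $\mathcal R$-maps, which is precisely the $\Gamma$-space (equivalently, topological-monoid) structure on $\mathcal M_{\mathcal R}$ constructed in \S\ref{ss:2.2}; the classifying-map construction in the proof of Theorem~\ref{th:2.3} is manifestly compatible with taking disjoint unions, because a triangulation of the disjoint union $N \sqcup N$ restricting to the chosen triangulations on each copy produces the concatenated classifying map.

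Next I would invoke the Segal machinery recalled in \S\ref{ss:2.3}: since $\mathcal M_{\mathcal R}$ is (homotopy equivalent to) a topological monoid and each loop space $\Omega B^i\mathcal M_{\mathcal R}$ is its group completion, the path components of the infinite loop space $\Omega^{\infty}\mathcal M_{\mathcal R}$ form the group completion (Grothendieck group) of the commutative monoid $\pi_0(\mathcal M_{\mathcal R})$. By definition $k^0_{\mathcal R}(pt) = [pt, \Omega^{\infty}\mathcal M_{\mathcal R}] = \pi_0(\Omega^{\infty}\mathcal M_{\mathcal R})$, so combining these two facts yields that $k^0_{\mathcal R}(pt)$ is the group completion of $\pi_0(\mathcal M_{\mathcal R})$, which by the first paragraph is the monoid of cobordism classes of $\mathcal R$-maps to the point. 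This is exactly the assertion.

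The only genuinely delicate point is the statement that $\pi_0$ of the group completion of a topological monoid $M$ is the group completion of the monoid $\pi_0(M)$. This is standard — it follows because $\pi_0$ is a left adjoint (it commutes with colimits, in particular with the bar/telescope construction computing the group completion) and because group completion of discrete monoids is itself a left adjoint, so the two left adjoints compose correctly; alternatively one cites the group-completion theorem of \cite{Se}. I expect this to be the main (and essentially the sole) obstacle, and it is one that the paper can legitimately treat as known. Everything else is a formal unwinding of definitions: identifying the monoid operations on the two sides of Theorem~\ref{th:2.3}, and recalling that $k^0_{\mathcal R}(pt) = \pi_0(\Omega^{\infty}\mathcal M_{\mathcal R})$.
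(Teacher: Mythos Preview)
Your proposal is correct and is precisely the argument the paper has in mind: the paper states the corollary without proof, treating it as an immediate consequence of Theorem~\ref{th:2.3} (specialized to $N=pt$) together with the Segal machinery of \S\ref{ss:2.3}, and you have simply spelled out those implicit steps. The only point worth noting is that your remark about the compatibility of the monoid structures and the identification of $\pi_0$ of the group completion with the Grothendieck group of $\pi_0$ are exactly the routine verifications the paper suppresses.
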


\begin{remark}\label{r:14.4} Given a manifold $N$, the group $k^0_{\mathcal{R}}(N)$ may not be the group completion of the monoid of cobordism classes of $\mathcal{R}$-maps to $N$. Indeed, the cobordism monoid of coverings of $S^n$ for $n\ge 2$ is isomorphic to the monoid $\mathbb{N}$ of positive integers. On the other hand $k^0_{\mathcal R}(S^n)=[S^n, \Omega^{\infty}S^{\infty}]=\Z\times \pi^{st}_n$. 
\end{remark}

\begin{remark} In singularity theory, the term \emph{bordism monoids of $\mathcal R$-maps} is often replaced with the term \emph{cobordism monoids of $\mathcal R$-maps}. These two notions are different; bordism classes are represented by maps of closed manifolds while cobordism classes are represented by proper maps. For example, the cobordism group of $\mathcal R$-maps to $N$ is not isomorphic to the bordism group of $\mathcal R$-maps to $N$ for $N=\R$ in the case where $\mathcal R$ is the \emph{Morse differential relation}, i.e., the minimal open stable differential relation satisfied by Morse functions (for computations see the papers \cite{Ik}, \cite{IS}, \cite{Kal0}).  
\end{remark}

In the rest of the subsection we complete the proof of the assertion that the space $\M_\r$ has a structure of a $\Gamma$-space.

\begin{theorem}\label{th:5.6} The structure map $\gamma\colon \M_\r(\mathbf{m})\to \M_\r(\mathbf{1})^m$ is a homotopy equivalence.
\end{theorem}
\begin{proof} To simplify the notation we will assume that $m=2$. In general the argument is similar. Let $p$ and $q$ be two distinguished vertices in the source and target of $\gamma$ such that $\gamma(p)=q$. 
We may assume that both $p$ and $q$ are represented by a pair of disjoint submanifolds in $\Delta^n_e\times \R^{\infty}$. To show that $\gamma_*$ is surjective in homotopy groups in degree $n>0$, pick an element in the target of $\gamma_*$. By Theorem~\ref{th:2.3} it corresponds to a pair of cobordism classes of graphic $\r$-maps 
\[
     (f_i, \beta_i)\co V_i\longrightarrow S^n\times \R^{\infty}, \qquad i=1,2,
\]
where each $V_i$ is a smooth manifold. Furthermore, the pullback of $(f_i, \beta_i)$ with respect to the inclusion of the base point $\Delta^0_e\to S^n$ is a graphic $\mathcal R$-map representing $q$. A slight generic perturbation of $\beta_1$ outside 
$f^{-1}_1(\Delta^0_e)$ makes $\beta_1(V_1)$ disjoint from $\beta(V_2)$ in $\R^{\infty}$, and therefore defines a lift of the given class in  $\pi_n(\M_\r(\mathbf{1}))^2$
to a class in $\pi_n(\M_\r(\mathbf{2}))$. A similar argument shows that $\gamma$ induces an isomorphism of the sets of 
path components and that $\gamma_*$ is injective. 
\end{proof}

\subsection{Cobordism groups of solutions of $\mathcal R$}
One of the most obvious applications of the b-principle is computation of cobordism groups of solutions to differential relations. One of the first applications of the b-principle is due to Wells~\cite{We} who computed rational cobordism groups of immersions. 

\begin{theorem}[Wells] Let $T\xi_d$ denote the Thom space of the universal vector bundle of dimension $d$. Then modulo finite group the cobordism group of immersions of codimension $d$ into $S^{n+d}$ is isomorphic to $H_{n+d}(T\xi_d)$.
\end{theorem}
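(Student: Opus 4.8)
The plan is to push the bordism group of codimension $d$ immersions through the machinery of the preceding sections: identify it with a homotopy group of the spectrum classifying $k^*_{\mathcal R}$, transport it across the b-principle to $h^*_{\mathcal R}$, read off the answer from the tangential structure of immersions, and rationalize. The relation $\mathcal R$ of immersions of codimension $d$ is an open stable differential relation imposed on maps of dimension $-d<1$, so Theorem~\ref{th:1} applies and $\mathfrak A_{\mathcal R}\colon k^*_{\mathcal R}\to h^*_{\mathcal R}$ is an equivalence; this is the only non-formal input.

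First I would identify the bordism group of codimension $d$ immersions into $\R^{n+d}$ with $\pi_{n+d}$ of the spectrum $\{B^i\mathcal M_{\mathcal R}\}$ classifying $k^*_{\mathcal R}$. By Theorem~\ref{th:2.3}, cobordism classes of proper $\mathcal R$-maps of dimension $-d$ to $S^{n+d}$ correspond bijectively to $[S^{n+d},\mathcal M_{\mathcal R}]$; such a proper $\mathcal R$-map is exactly an immersion of a closed $n$-manifold into $S^{n+d}$, and since $n<n+d$ a general position argument -- applied to immersions and to cobordisms between them alike -- identifies their cobordism monoid with the bordism monoid of codimension $d$ immersions into $\R^{n+d}$. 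Passing to the group completion, that is, to the infinite loop space $\Omega^{\infty}\mathcal M_{\mathcal R}$ produced by the Segal construction, turns this bordism group into $\pi_{n+d}$ of the spectrum of $k^*_{\mathcal R}$.

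By the b-principle this group equals $\pi_{n+d}$ of the spectrum classifying $h^*_{\mathcal R}$, which by \S\ref{ss:5.2} is the Thom spectrum $\mathbf B_{\mathcal R}$ of the tangential structure $(B_{\mathcal R},f_{\mathcal R})$. Running the computation of Example~\ref{e:3} with immersion germs in place of submersion germs shows $B_{\mathcal R}\simeq\BO_d$ with $f_{\mathcal R}$ classifying minus the universal $d$-plane bundle, so $\mathbf B_{\mathcal R}$ is (the suspension spectrum of) the Thom space $T\xi_d$, and the bordism group in question is $\pi^s_{n+d}(T\xi_d)$. Finally, since the rational sphere spectrum is the rational Eilenberg--MacLane spectrum, $\pi^s_*(-)\otimes\Q\cong H_*(-;\Q)$, so modulo finite groups the bordism group is isomorphic to $H_{n+d}(T\xi_d)$. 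The genuinely substantive step is Theorem~\ref{th:1}; the tangential-structure computation for immersions and the general-position and group-completion bookkeeping are routine.
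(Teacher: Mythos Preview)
Your proposal is correct and follows essentially the same route as the paper's proof: identify the $(B_{\mathcal R},f_{\mathcal R})$ structure for immersions as $\BO_d$ with its universal bundle (the paper phrases this via the normal rather than the tangential structure, but the content is identical), conclude that the relevant spectrum is the suspension spectrum of $T\xi_d$, and then rationalize stable homotopy to homology. You spell out the passage through $\mathcal M_{\mathcal R}$, Theorem~\ref{th:2.3}, and the group completion more explicitly than the paper does, but the substantive input---Theorem~\ref{th:1} for $d<1$ and the analogue of Example~\ref{e:3}---is the same.
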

\begin{proof} An argument similar to that in Example~\ref{e:3} shows that in the normal structure $(B, f)$ of the immersion differential relation imposed on maps of codimension $d$ the space $B$ is the classifying space $\BO_d$ of vector bundles of dimension $d$, while $f\co B\to \BO\times \{d\}$ is the standard inclusion. Thus the spectrum of the bordism group of immersions is given by the suspension spectrum of the space $T\xi_d$. Hence the rational cobordism group of immersions of codimension $d$ into $S^{n+d}$ is isomorphic to $\pi_{n+d}(h\M_\r)\otimes\Q \approx\pi_{n+d}^{st}(T\xi_d)\otimes \Q\approx H_{n+d}(T\xi_d; \Q)$. 	
\end{proof}

The cobordism classes of Legendrian immersions of manifolds of dimension $n$ form an abelian group $L_n$ under the operation given by the disjoint union. The Eliashberg-Audin b-principle for Legendrian immersions~\cite{El}, ~\cite{Au3}
lead to computation of the rational cobordism group of Legendrian immersions~\cite{Va}. In fact the Eliashberg-Audin spectrum for Legendrian immersions is a prototype of our space $h\M_\r$, especially compare the construction in \cite{Au3} with \S\ref{s:11}. 

\begin{theorem}[Eliashberg-Audin]\label{th:EA} Let $L=\oplus_n^{\infty} L_n$ be the graded group of Legendrian immersions of manifolds of all dimensions. Then
\[
     L\otimes \mathbb{Q} \approx \Lambda_{\mathbb{Q}}[\alpha_0, \alpha_1, ...],
\]     
where $\Lambda_{\mathbb{Q}}[\alpha_0, \alpha_1, ...]$ is the exterior algebra in terms of classes $\alpha_i$ of degree $4i+1$.   
\end{theorem}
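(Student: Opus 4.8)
The plan is to apply the b-principle machinery of this paper to the differential relation $\mathcal R$ of Legendrian (equivalently, isotropic) immersions and then to compute the rational homotopy of the resulting Thom spectrum. For each $n$, Legendrian immersions of $n$-manifolds into $(2n+1)$-dimensional contact manifolds are the solutions of an open stable differential relation of dimension $d=-(n+1)$ in the sense of \S\ref{a:1} --- an immersion relation subject to an extra isotropy condition, which is again open and stable --- and grading by source dimension assembles the cobordism groups of solutions into $L=\bigoplus_n L_n$. Since $d=-(n+1)<1$, Theorem~\ref{th:1} applies (this is precisely the b-principle for $\mathcal R$, equivalently Eliashberg's h-principle for Legendrian immersions \cite{El}), so by the description of $h\M_\r$ through its tangential structure (\S\ref{ss:5.2}, Lemma~\ref{l:6.3}) the group $L_n$ is identified with a homotopy group of the infinite loop space of the Thom spectrum $\mathbf{B}_\r$ of the structure $(B_\r,f_\r)$.

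Next I would identify $(B_\r,f_\r)$, proceeding as in Example~\ref{e:3} for submersions and in the discussion of the immersion relation following Wells' theorem. Unwinding the definition of the tangential structure, the linear-algebraic formal datum of a Legendrian immersion of $V$ is, by the h-principle for Lagrangian immersions (\cite{Gr}, \cite{El}), a trivialization of $TV\otimes\C$ together with the transverse Reeb direction; up to homotopy this is a lift of the stable tangent bundle of $V$ along the canonical map $U/O\to\BO$ from the stable Lagrangian Grassmannian. Hence $B_\r\simeq U/O$, the map $f_\r$ is this canonical map, and $\mathbf{B}_\r$ is the Thom spectrum over $U/O$ of the universal virtual bundle pulled back from $\BO$. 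The product on $L$ given by Cartesian products of Legendrian immersions then corresponds to the product on $\pi_*\mathbf{B}_\r$ induced by the infinite loop (hence $E_\infty$-ring) structure on this Thom spectrum.

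The rational computation now has two standard steps. First, a lift to $U/O$ trivializes the complexification of the universal bundle, so its rational Pontryagin classes vanish and, by the Thom isomorphism, $\mathbf{B}_\r\simeq_\Q\Sigma^{\infty}_{+}(U/O)$ up to a degree shift; since rational stable homotopy is rational homology, $\pi_*\mathbf{B}_\r\otimes\Q\cong H_*(U/O;\Q)$ as graded rings, and a bookkeeping of the various dimension shifts (the codimension, the trivial Reeb summand of the normal bundle, and the Thom isomorphism) gives $L_n\otimes\Q\cong H_n(U/O;\Q)$. Second, by Bott periodicity $\pi_i(U/O)\otimes\Q=\Q$ for $i\equiv 1\pmod 4$ and $0$ otherwise, so $U/O$ is rationally a product of Eilenberg--MacLane spaces $K(\Q,4i+1)$, whence $H_*(U/O;\Q)=\Lambda_\Q[\alpha_0,\alpha_1,\dots]$ with $|\alpha_i|=4i+1$ as graded rings. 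Combining the steps yields $L\otimes\Q\cong\Lambda_\Q[\alpha_0,\alpha_1,\dots]$.

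The main obstacle is the interface between geometry and the paper's formalism in the first two steps: realizing Legendrian immersions faithfully as open stable differential relations of fixed dimension and arbitrary target, verifying that the cobordism groups of their solutions are exactly the groups $L_n$ (this is where Eliashberg's h-principle \cite{El} and its stable refinement in Theorem~\ref{th:1} enter, as in \cite{Va}), and carrying out the identification $B_\r\simeq U/O$ together with the degree bookkeeping relating $\pi_*\mathbf{B}_\r$ to $L$ without an off-by-one error --- the front-projection model is deceptive here, since a Legendrian front is not an immersion. The Bott-periodicity input and the rational Thom-isomorphism argument are routine.
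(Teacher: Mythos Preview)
The paper does not prove this theorem: it is quoted as a known application, with the b-principle step attributed to Eliashberg~\cite{El} and the rational computation to Vassiliev~\cite{Va}. Your outline is precisely the Eliashberg--Vassiliev argument and is correct in its broad strokes: reduce Legendrian cobordism to the homotopy of a Thom spectrum via the h-/b-principle, identify the base as the stable Lagrangian Grassmannian $U/O$, observe that the complexified tautological bundle is trivial so the rational Thom isomorphism collapses the problem to $H_*(U/O;\Q)$, and finish with Bott periodicity. Your degree bookkeeping (normal bundle $\cong TV\oplus\varepsilon$, vanishing rational Pontryagin classes, Thom shift by $n$ so that $L_n\otimes\Q\cong H_n(U/O;\Q)$) is right.

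One genuine gap you flag but do not close: Legendrian immersions are \emph{not} open stable differential relations in the sense of Definition~\ref{d:9}. The relation depends on a contact structure on the target and is invariant only under contactomorphism germs, not under the full group $\mathbf{G}$ of source--target diffeomorphism germs required there; so Theorem~\ref{th:1} does not apply as stated. What actually carries the argument is Eliashberg's own h-principle for Legendrian immersions~\cite{El} (equivalently, an adaptation of the present machinery to a reduced target structure group), not the main theorem of this paper verbatim. The paper lists the Legendrian case as a b-principle phenomenon and asserts its main result generalizes it, but does not spell out this adaptation, and neither should you pretend it is automatic.
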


Computation of cobordism groups of maps with prescribed singularities were carried out for an infinite series of singularity types (called Morin singularities) by Sz\H{u}cs~\cite{Sz}  and the author~\cite{Sa2} for maps of negative and positive dimensions respectively by essentially computing the rational homology groups of the space $h\M_\r$.  We emphasize that the b-principle type theorems are essential for these computations.

\subsection{Invariants of solutions of $\mathcal R$}\label{s:inv}

 Let $\mathcal R$ be an open stable differential relation imposed on maps of dimension $d$. 

\begin{definition}\label{c:1}A \emph{characteristic class} of $\mathcal R$-maps is a function that assigns a class $w(f)\in H^*(|C_\bullet|)$ to every $\mathcal R$-map $f$ to a simplicial set $C_\bullet$ so that  
\begin{itemize}
\item $w(f)=w(g)$ for concordant $\r$-maps $f$ and $g$, and 
\item if $f: C_\bullet\to D_\bullet$ is an $\mathcal R$-map and $j: D'_\bullet\to D_\bullet$ a morphism, then 
$w(j^*f)=j^*w(f)$ 
for the pullback $\mathcal R$-map $j^*f$.    
\end{itemize}
\end{definition}

It immediately follows that characteristic classes of solutions are in bijective correspondence with cohomology classes of $\M_\r$. Similarly one defines characteristic classes of stable formal solutions. These classes are in bijective correspondence with classes in $H^*(h\M_\r)$. For example rational characteristic classes of stable formal oriented submersions are the generalized Miller-Morita-Mumford classes.  In general, from the Group Completion Theorem (e.g., see \cite{MS})  we deduce Theorem~\ref{th:12.10}.

\begin{theorem}\label{th:12.10} Let $\r$ be a differential relation satisfying the b-principle. Then the algebra $H^*(\M_\r; \mathbb{Q})$ of rational characteristic classes 
satisfies 
\[
H^*(\M_\r; \mathbb{Q})[\pi_0^{-1}] = \mathbb{Q}[\tau_1, \tau_2, \cdots]\otimes \Lambda_{\mathbb{Q}}[\eta_1, \eta_2, \cdots],
\]
where the classes $\tau_1, \tau_2, ...$ are of even degrees and $\eta_1, \eta_2, ....$ are of odd degrees. 
\end{theorem}

\appendix
\section{Appendix}

\subsection{Partial sheaves}\label{a:1.5}

Let $\mathcal{X}$ be the category of smooth manifolds and smooth maps. Recall that a set valued \emph{sheaf} $\mathcal{F}$ is 
a functor $\mathcal{X}^{op}\to \mathbf{Set}$ satisfying the Sheaf Axiom. Namely, for any locally finite covering $\{U_i\}$
of a manifold $M$ and any sections $s_i\in \mathcal{F}(U_i)$ that agree on common domains---that is, $s_i=s_j$ on $U_i\cap U_j$---there exists a section $s\in \mathcal{F}(M)$ with $s=s_i$ on $U_i$ for all $i$.

Similarly, a \emph{partial (set valued) sheaf} $\mathcal{F}$ is a partially defined functor $\mathcal{X}^{op}\to \mathbf{Set}$. To each manifold $M$, the partial sheaf $\mathcal{F}$ associates a set, as a usual sheaf does. However, for a smooth map $g\co M\to N$, the associated map $\mathcal{F}(g)$ is a map from a subset of $\mathcal{F}(M)$ to $\mathcal{F}(N)$. 
Finally, a partial sheaf is required to satisfy the Sheaf Axiom. 

A \emph{morphism} $f$ of a partial sheaf $\mathcal{F}$ into a partial sheaf $\mathcal{G}$ is a collection of 
maps $f_U\co \mathcal{F}(U)\to \mathcal{G}(U)$, one for each manifold $U\in \mathcal{X}$, such that the diagram 
\[
\xymatrix{
\mathcal{F}(U) \ar@{-->}[d]^{\mathcal{F}(\varphi)} \ar[r]^{f_U} & \mathcal{G}(U)\ar@{-->}[d]^{\mathcal{G}(\varphi)}\\
\mathcal{F}(V) \ar[r]^{f_V} & \mathcal{G}(V)}
\]
commutes for each $\varphi\co V\to U$. Note that the morphisms $f_U$ and $f_V$ are defined on the sets $\mathcal{F}(U)$ and $\mathcal{F}(V)$ respectively, 
while the morphisms $\mathcal{F}(\varphi)$ and $\mathcal{F}(\varphi)$ are defined only on subsets of $\mathcal{F}(U)$ and $\mathcal{G}(U)$ respectively. We require that if $\mathcal{F}(\varphi)$ is defined on a section $s$, then $\mathcal{G}(\varphi)$ is defined on the section $f_U(s)$. 

Many sheaf theoretic constructions are suitable for partial sheaves. For example, for each non-negative integer $m$, let $\mathcal{F}_m$ denote the subset of $\mathcal{F}(\Delta^m_e)$ of those sections that are 
in the domain of every morphism of the form $\mathcal{F}(\delta)$ where $\delta$ ranges 
over all simplicial morphisms $\Delta^n_e\to \Delta^m_e$. Then 
$\mathcal{F}_{\bullet}$ is a simplicial set and its geometric realization $|\mathcal{F}|$ is said to be the \emph{geometric realization}
of the partial sheaf $\mathcal{F}$. A morphism $f\co \mathcal{F}\to \mathcal{G}$ of partial sheaves gives rise to 
a continuous map $|f|$ of the realizations of the partial sheaves. 
A map $f$ of partial sheaves is said to be a \emph{weak equivalence} if $|f|$ is a homotopy equivalence. 

Two sections $s_0$ and $s_1$ in $\mathcal{F}(N)$ are said to be \emph{concordant} if there is a section $s$ in $\mathcal{F}(N\times \Delta^1_e)$ such that $\mathcal{F}(\delta_i)(s)=s_i$ for $i=0,1$. For all partial sheaves $\mathcal F$ considered in the paper, the general position argument shows that the set of concordance classes $\mathcal{F}[N]$ of sections 
in $\mathcal{F}(N)$ is isomorphic to the set of homotopy classes of maps $[N, |\mathcal{F}|]$. 

\begin{example}\label{ex:a1} Given $N\in \mathcal{X}$, let $\mathcal{F}(N)$ denote the set of all smooth maps $f$ of dimension $d$ to $N$. 
Given a map $u\co N'\to N$ transverse to $f$, let $\mathcal{F}(u)(f)$ denote the pullback of $f$. Then $\mathcal{F}$ is a partial sheaf. It is not a sheaf since $\mathcal{F}(u)(f)$ is not defined unless $u$ is transverse to $f$. 
\end{example}  

Similarly one defines partial $\mathcal{C}$-valued sheaves for categories $\mathcal{C}$ distinct from $\mathbf{Set}$.

\subsection{$\Gamma$-spaces}\label{asec:6}

Recall that the \emph{semi-simplicial category} $\Lambda$ is 
the category of finite sets $[n]=\{0,...,n\}$ and strictly monotone maps. In particular, every 
morphism in $\Lambda$ is a composition of face maps 
\[
   \delta_i\co [n]\mapsto [n+1] \quad\textrm{given by}\quad \{0,...,n\}\xrightarrow{\delta_i} \{0,...,\hat{i}, ..., n+1\}. 
\]
A \emph{semi-simplicial space} is a functor $X\co \Lambda^{op}\to \mathbf{Top}$. 
Its geometric realization is the quotient topological space 
\[
    |X|\co= \bigsqcup_{n\ge 0}\ X([n])\times \Delta^n\ /\ (d_ix, y)\sim (x, \delta_iy),
\]
where $d_i=X(\delta_i)$, and $\delta_i$ stands for 
the $i$-th face of a simplex. 
Every $\Gamma$-space has a canonical structure of a semi-simplicial space; it is defined
by means of the functor $\Lambda^{op} \to \Gamma^{op}$ that takes $[m]$ to $\mathbf{m}$
and $\delta_i$ to 
\[
    \{1,...,n\}\mapsto \Big\{\{2\}, ..., \{n+1\}\Big\}  \quad \textrm{if $i=0$},
\] 
\[
    \{1,...,n\}\mapsto \Big\{\{1\}, ..., \{i-1\}, \{i, i+1\}, \{i+2\}, ..., \{n+1\}\Big\}  \quad \textrm{if $i\ne 0, n+1$},    
\]
\[
    \{1,...,n\}\mapsto \Big\{\{1\}, ..., \{n\}\Big\}  \quad \textrm{if $i=n+1$}.
\]
In particular, there is a well-defined geometric realization of a $\Gamma$-space.

\begin{example} Every abelian topological group $A$ defines a $\Gamma$-space with $A(\mathbf{n})=A^n$
and with $A(\mathbf{m}\xrightarrow{\theta} \mathbf{n})$ given by $(a_1,...,a_n)\mapsto (b_1,..., b_m)$
where $b_i=\sum_{j\in \theta(i)} a_j$. The geometric realization of the $\Gamma$-space $A(\bullet)$ 
is a model for the classifying space of the abelian topological group $A$ provided that $(A, 0)$ is 
a well-pointed space, see \cite[Proposition A.1]{Se}.
\end{example}

\begin{remark} Segal explains in \cite{Se} that every $\Gamma$-space has a canonical 
structure of a simplicial space. He then works with the fat geometric realizations of simplicial sets of 
$\Gamma$-spaces. This essentially corresponds to our taking the semi-simplicial 
 geometric realizations $|A|$ of $\Gamma$-spaces. More precisely, Segal defines the \emph{fat realization} to be the telescope of inclusions $|A|_{n-1}\to |A|_n$  of the $(n-1)$-st skeleton of $A$ to the $n$-th one for $n>0$. Since $|A|$ is a deformation retract of the fat realization of $A$, the two spaces are homotopy equivalent. 
 We also note that $\mathbf{X}_{\bullet}$ in Theorem~\ref{th:-1}  is a 
simplicial \emph{set}, which is always \emph{good} in the terminology of Segal. 
\end{remark}

Every $\Gamma$-space $A(\bullet)$ determines a family of $\Gamma$-spaces
$R_k(\bullet)$ and $\Gamma$-spaces $L_n(\bullet)$, one 
$\Gamma$-space for each $k, n\ge 0$. These $\Gamma$-spaces consist of spaces  
$R_k(\mathbf{n})=L_n(\mathbf{k})=A(\mathbf{k}\times \mathbf{n})$
and structure morphisms
\[
    R_k(\mathbf{n}\to \mathbf{m}) = A(\mathbf{k}\times (\mathbf{n}\to \mathbf{m})), \quad
    L_n(\mathbf{k}\to \mathbf{l}) = A((\mathbf{k}\to \mathbf{l})\times \mathbf{n}),
\]
where the identity morphism of an object is identified with the object.  Since the diagrams
\[
\begin{CD}
 A(\mathbf{l}\times \mathbf{m}) @>L_m(\mathbf{k}\to \mathbf{l})>> A(\mathbf{k}\times \mathbf{m})\\
 @VR_l(\mathbf{n}\to\mathbf{m})VV @VVR_k(\mathbf{n}\to\mathbf{m})V\\
 A(\mathbf{l}\times \mathbf{n}) @>L_n(\mathbf{k}\to \mathbf{l})>> A(\mathbf{k}\times \mathbf{n})
\end{CD}
\]
commute, 
the geometric realizations $BA(\mathbf{k}) = |R_{k}(\bullet)|$ of $\Gamma$-spaces 
themselves form a $\Gamma$-space; 
the structure morphism $BA(\mathbf{k}\to \mathbf{l})$ is defined by means of morphisms 
\[
    R_l(\mathbf{n}) = A(\mathbf{l}\times \mathbf{n}) \xrightarrow{L_n(\mathbf{k}\to\mathbf{l})}
    A(\mathbf{k}\times \mathbf{n})= R_k(\mathbf{n}).
\]
It follows~\cite{Se} that the so-obtained family of classifying spaces forms a spectrum 
\[
 \quad A(\mathbf{1}), \ BA(\mathbf{1}), \ B^2A(\mathbf{1}), \ B^3A(\mathbf{1}), \dots.
\]

\subsection{$(B,f)$ structures}\label{a:3} Let us recall that the direct limit $\BO$ of spaces $\BO_n$ classifies stable vector bundles of a fixed dimension $d$ in the following sense. A \emph{stable vector bundle} of dimension $d$ over a topological space $X$ is represented by a pair $(\xi, \eta)$ of vector bundles over $X$ with $d=\dim \xi-\dim \eta$. The equivalence relation on representatives is generated by identifications of $(\xi,\eta)$ and $(\xi',\eta')$ for pairs with $\xi\oplus\eta'\cong\xi'\oplus \eta$. The stable vector bundles of pairs $(\xi, 0)$, $(0, \eta)$ and $(\xi, \eta)$ are also denoted by $\xi$, $-\eta$ and $\xi-\eta$ or $\xi\ominus\eta$ respectively. Stable vector bundles of dimension $d$ over a topological space $X$ are in bijective correspondence with homotopy classes of maps $f\co X\to \BO$. 

A \emph{normal $(B,f)$ structure} is a stable vector bundle $f\co B\to \BO$ of dimension $-d$ for some integer $d$. Each normal $(B,f)$ structure determines a cohomology theory. Indeed, let $f_{n+d}$ be the restriction of $f$ to $B_{n+d}=f^{-1}(\BO_{n})$ and $\xi_n$ denote the universal vector bundle over $\BO_n$. Let $T_n$ denote the one point compactification of the vector bundle $f_{n+d}^*\xi_n$ of dimension $n$ over $B_{n+d}$. Then the direct limit $\Omega^{\infty+d}\mathbf{B}$ of spaces $\Omega^{n+d}T_n$ is an infinite loop space, which determines a cohomology theory $\mathfrak{h}^*$. For a topological space $N$ and an integer $n$, the group 
\[
    \mathfrak{h}^n(N)=\lim_{i\to \infty}[X, \Omega^{i+d}T_{n+i}]
\] 
is called the $n$-th \emph{cohomology group} of $N$ with respect to the theory $\mathfrak{h}^*$ (e.g., see the book \cite{St}). Note that the homotopy groups of the spectrum
$\{\Omega^{\infty+d-i}\mathbf{B}\}$ that deloops $\Omega^{\infty+d}\mathbf{B}$ may not be trivial in negative degrees, i.e., the spectrum under consideration is not connective. There is another infinite delooping of $X=\Omega^{\infty+d}\mathbf{B}$ by a connective spectrum $\{B^iX\}$ where $B^i$ is the $i$-th classifying space of $X$. We say that $\{B^iX\}$ is the connective spectrum of the normal structure $(B, f)$, e.g., see \cite{Se}.

There is an involution $I$ on $\BO$ that takes the equivalence class of a pair $(\xi, \eta)$ onto that of $(\eta, \xi)$. The cohomology theory $\mathfrak{h}^*$ can be described not only by a normal $(B, f)$ structure, but also by a \emph{tangential $(B, f')$ structure} $f'=I\circ f$, which is a stable vector bundle $B\to \BO$ of dimension $d$; e.g., see \cite{St}. 

Given a tangential structure $f\co B\to \BO$, a \emph{tangential structure on a map $u\co M\to N$} of smooth manifolds  is a lift $M\to B$ with respect to $f$ of the map $M\to \BO$ classifying the stable tangent bundle $TM\ominus TN$ of $u$. The group $\mathfrak{h}^0(N)$ is the cobordism group of maps $M\to N$ of smooth manifolds $M$ equipped with a tangential $(B, f)$ structure.

\subsection{A sheaf theoretic version of the Brown representability theorem}


It follows from the Brown's construction~\cite{Br} that every natural transformation $F\to G$ of homotopy functors with connected pointed representing spaces is induced by a map $Y_F\to Y_G$ of representing spaces; and if the natural transformation is an isomorphism, then the map of representing spaces is a homotopy equivalence.  We will apply the Brown representability theorem in the case of set-valued sheaves $\mathcal{F}$ and $\mathcal{G}$. 

\begin{remark}
It is known that in general the statement of the Brown representability theorem is not true if the requirements that maps under consideration are pointed and $Y_H$ is connected are omitted~\cite{He}, \cite{He2}. 
\end{remark}

\begin{corollary} \label{c:10.4} Let $\mathcal{F}$ and $\mathcal{G}$ be two set valued sheaves on the category $\mathcal{X}$ of manifolds.  
Suppose that the classifying spaces of $\mathcal{F}$ and $\mathcal{G}$ are connected H-spaces. Let $\varphi_X\co \mathcal{F}[X]\to \mathcal{G}[X]$ be natural maps of sets for $X\in \mathcal{X}$. Then there is a map 
$|\mathcal{F}|\to |\mathcal{G}|$ inducing the natural transformation $\varphi$. Furthermore, if every map $\varphi_X$ is an isomorphism of sets, then the map $|\mathcal{F}|\to |\mathcal{G}|$ is a homotopy equivalence.  
\end{corollary}
\begin{proof} Recall that for any pointed CW-complexes $X, Y$, the fundamental group of $X$ acts on homotopy classes of pointed maps $Y\to X$. If $X$ is an H-space, then the action of the fundamental group $\pi_1X$ is trivial, see \cite[Example $4$A.$3$]{Hat}. Therefore in this case the set of homotopy classes of pointed maps $Y\to X$ is isomorphic to the set of homotopy classes of free maps $Y\to X$.  

Thus, under the hypotheses of Corollary~\ref{c:10.4}, there is a natural transformation defined by isomorphisms
\[
        F(X) \simeq [X, |\mathcal{F}|]_\bullet\simeq [\tilde{X}, |\mathcal{F}|] \simeq  \mathcal{F}[\tilde{X}] \longrightarrow \mathcal{G}[\tilde{X}] \simeq [\tilde{X}, |\mathcal{G}|] \simeq [X, |\mathcal{G}|]_\bullet \simeq G(X), 
\]
where $\tilde{X}$ is a manifold homotopy equivalent to the finite CW-complex $X$. Now Corollary~\ref{c:10.4} follows  from the original Brown theorem. 
\end{proof}

\end{document}